% user docs for compositio.cls
 % date this file last revised
    % version of compositio.cls documented

\documentclass{compositio}
% If you have the AMSLaTeX distribution installed on your system,
% please delete the "[noams]" option above.
\usepackage{amsmath, amssymb, latexsym, amsfonts, amscd, amsthm, xypic, setspace, color, hyperref, graphicx}

\theoremstyle{plain}
\newtheorem{theorem}{Theorem}[subsection]
\newtheorem{proposition}[theorem]{Proposition}
\newtheorem{lemma}[theorem]{Lemma}
\newtheorem{corollary}[theorem]{Corollary}

\theoremstyle{definition}
\newtheorem{definition}[theorem]{Definition}

\theoremstyle{remark}
\newtheorem{remark}[theorem]{Remark}
\newtheorem{example}[theorem]{Example}

\newcommand{\Gal}{\text{Gal}}
\newcommand{\Pic}{\text{Pic}}
\newcommand{\Piczero}{\text{Pic}^{0}}
\newcommand{\Div}{\text{Div}}

\newcommand{\xbar}{\bar{X}}

\newcommand{\Br}{\text{Br}}
\newcommand{\Hom}{\text{Hom}}

\newcommand{\Ext}{\text{Ext}}

\newcommand{\Spec}{\text{Spec}}

\newcommand{\Z}{\mathbf{Z}}
\newcommand{\Q}{\mathbf{Q}}

\newcommand{\C}{\mathbf{C}}
\newcommand{\QZ}{\mathbf{Q}/\mathbf{Z}}

\newcommand{\G}{\mathbf{G}}

\begin{document}

\title{Weil-\'etale Cohomology over $p$-adic Fields}
\author{David A. Karpuk}
\email{karpuk@math.umd.edu}
\address{Department of Mathematics, University of Maryland, College Park, MD 20910}

%\dedication{A dedication can be included here.}
\classification{Primary: 14F20, Secondary: 14G20, 11S25}
\keywords{\'etale cohomology, Galois cohomology}
%\thanks{This file documents \pkg{compositio} version \Fileversion\ and
%was last revised \Filedate.}

\begin{abstract}

%Recent research by Lichtenbaum and others has established the arithmetic utility of the Weil group of a finite field, by demonstrating a connection between certain Euler characteristics in Weil-\'etale cohomology and special values of zeta functions.  In particular, the order of vanishing and leading coefficient of the zeta function of a smooth, projective variety over a finite field have a Weil-\'etale cohomological interpretation in terms of certain secondary Euler characteristics.  These results rely on a duality theorem stated in terms of cup-product in Weil-\'etale cohomology.  

We establish duality results for the cohomology of the Weil group of a $p$-adic field, analogous to, but more general than, results from Galois cohomology.  We prove a duality theorem for discrete Weil modules, which implies Tate-Nakayama Duality.  We define Weil-smooth cohomology for varieties over local fields, and prove a duality theorem for the cohomology of $\G_m$ on a smooth, proper curve with a rational point.  This last theorem is analogous to, and implies, a classical duality theorem for such curves.
\end{abstract}

\maketitle

%%%%%%%%%%%%%%%%%%%%%%%%%%%%%%%%%%%%%%%%%%%%%%%%%%%%%%%
%%%%%%%%%%%%%%%%%%%%    Section 1      %%%%%%%%%%%%%%%%%%%%%%%%
%%%%%%%%%%%%%%%%%%%%%%%%%%%%%%%%%%%%%%%%%%%%%%%%%%%%%%%

\section{Introduction}
\label{sec:introduction}

\subsection{Background and Motivation}

Arithmetic applications of Weil groups have been a popular topic in recent years, starting with the article \cite{lichtenbaum2005} of Lichtenbaum.  In this article, Lichtenbaum defines a cohomology theory for varieties over finite fields, Weil-\'etale cohomology, wherein the Weil group plays the role that the Galois group plays in \'etale cohomology.  The Weil-\'etale cohomology groups of the sheaf $\Z$ on a smooth, projective variety are shown to be finitely generated abelian groups.  The resulting secondary Euler characteristics provide a cohomological interpretation of the order of vanishing and leading coefficient of the zeta function of the variety at $t=1$.

If $k$ is a finite field, the \'etale and Weil-\'etale cohomology groups of $X=\Spec\ k$ with $\Z$ coefficients are given, respectively, by
$$%\begin{equation}\label{wcff}
H^i(\hat{\Z},\Z)=\left\{ \begin{array}{cl}
\Z & \text{if $i=0$}\\
0 & \text{if $i=1$}\\
\QZ & \text{if $i=2$}\\
0 & \text{if $i\geq 3$}
\end{array}
\right.
\qquad \text{and} \qquad
H^i(\Z,\Z)=\left\{ \begin{array}{cl}
\Z & \text{if $i=0$}\\
\Z & \text{if $i=1$}\\
0 & \text{if $i\geq 2.$}
\end{array}
\right.
$$%\end{equation}
In general, taking Weil-\'etale cohomology of $\Spec\ k$ shifts the $\QZ$'s that appear in the \'etale cohomology groups down a degree and turning them into $\Z$'s.  In this sense, Weil-\'etale cohomology of $\Spec\ k$ determines the \'etale cohomology, as is made precise by Lemma 1.2 of \cite{lichtenbaum2005}.

Let $K$ be a $p$-adic local field with absolute Galois group $G$ and Weil group $W$.  Let $L$ be the completion of the maximal unramified extension of $K$, and let $\bar{L}$ be an algebraic closure of $L$ containing an algebraic closure $\bar{K}$ of $K$.  If $X=\Spec\ K$, the \'etale and Weil-\'etale cohomology groups of $X$ with $\G_m$ coefficients are given, respectively, by
$$%\begin{equation}\label{wclf}
H^i(G,\bar{K}^\times)=\left\{ \begin{array}{cl}
K^\times & \text{if $i=0$}\\
0 & \text{if $i=1$}\\
\QZ & \text{if $i=2$}\\
0 & \text{if $i\geq 3$}
\end{array}
\right.
\qquad \text{and} \qquad
H^i(W,\bar{L}^\times)=\left\{ \begin{array}{cl}
K^\times & \text{if $i=0$}\\
\Z & \text{if $i=1$}\\
0 & \text{if $i\geq 2$.}
\end{array}
\right.
$$%\end{equation}
For any connected, commutative algebraic group $\mathcal{A}/K$, the groups $H^i(W,\mathcal{A}(\bar{L}))$ determine the groups $H^i(G,\mathcal{A}(\bar{K}))$ up to isomorphism, which is made precise by our Theorem (\ref{weilgalalgcomp}).

Lichtenbaum's computation of the groups $H^i(X_W,\Z)$ for a curve $X$ over a finite field, and therefore his interpretation of special values of zeta functions, relies on a duality theorem stated in terms of cup-product in Weil-\'etale cohomology.  The main theorems presented here are analogous duality theorems for the Weil-\'etale cohomology of zero and one-dimensional schemes over $p$-adic fields.  The former is the Weil analogue of Tate-Nakayama Duality, and the latter is the Weil analogue of Lichtenbaum Duality for curves over $p$-adic fields.

Our main theorem concerning the cohomology of $W$-modules is Theorem (\ref{fgweilduality}).  Let $\Gamma_W:W$-Mod$\rightarrow \mathcal{A}b$ be the fixed-points functor, and let $R\Gamma_W:\mathcal{D}(W)\rightarrow \mathcal{D}(\Z)$ be its derived functor.  For a $W$-module $M$, let $M^D=\Hom(M,\bar{L}^\times)$.  There is a natural map in $\mathcal{D}(\Z)$,
$$%\begin{equation}\label{theorem1}
\psi(M):R\Gamma_W(M^D)\rightarrow R\Hom(R\Gamma_W(M),\Z[-1]),
$$%\end{equation}
induced by a cup-product pairing.  Our theorem is the following:

\noindent \textbf{Theorem \ref{fgweilduality}.}  Suppose that $M$ is finitely generated as an abelian group.  Then $\psi(M)$ is an isomorphism in $\mathcal{D}(\Z)$. \hfill \qed

This theorem implies Tate-Nakayama Duality, which in turn implies the main theorem of Local Class Field Theory.  This theorem was originally proven by Jiang in \cite{jiang2006} using Tate-Nakayama Duality, under the assumption that $M$ is a $G$-module.  We have removed this assumption, and provided a proof which is independent of the main results of Galois cohomology.  %Note that there are $W$-modules which cannot be given a $G$-module structure, as $W$ may act on $M$ via an infinite order automorphism, while the action of $G$ must factor through a finite quotient.  Thus studying the Weil group allows one to prove theorems about a larger class of modules.

Our second main theorem is a duality theorem for the Weil-\'etale cohomology of smooth, projective, geometrically connected curves $X/K$, which contain a rational point.  Let $\Gamma_X$ be the global sections functor on the Weil-\'etale site of $X$, and let $R\Gamma_X$ be the derived functor of $\Gamma_X$.  Our duality theorem for $X$ is the following:

\noindent \textbf{Theorem \ref{curveduality}.} Let $X/K$ be a smooth, projective, geometrically connected curve over $K$, such that $X(K)\neq\emptyset$.  There is a symmetric pairing
$$R\Gamma_X(\G_m)\otimes^LR\Gamma_X(\G_m)\rightarrow \Z[-2],$$
such that the induced map
$$%\begin{equation}\label{theorem2}
R\Gamma_X(\G_m)\rightarrow R\Hom(R\Gamma_X(\G_m),\Z[-2])
$$%\end{equation}
is an isomorphism on cohomology in degree $0$ and $1$, and injective on cohomology in degree $2$ and $3$.  The cohomology of both complexes vanishes outside of degrees $0$ through $3$. \hfill \qed

Our theorem is stated and proved more naturally in the setting of Weil-smooth cohomology, which is defined in the last chapter.  However, Weil-smooth and Weil-\'etale cohomology groups agree when the sheaf is given by a smooth, commutative group scheme, just as smooth and \'etale cohomology agree for such sheaves.

What prevents the map from being an isomorphism on the nose is that one essentially encounters the inclusion map $U_K^*\rightarrow \Hom(U_K,\QZ)$, which is not surjective.  This self-duality of $\G_m$ is the Weil analogue of the classical duality theorems of the article \cite{lichtenbaum1969}.  In fact, for curves with rational points, one can deduce the main result of \cite{lichtenbaum1969}, namely that there is a natural isomorphism $\Br(X)\rightarrow \Pic(X)^*$.

More recent work by Lichtenbaum \cite{lichtenbaum2009}, Flach \cite{flach2008}, Morin \cite{morin20101} and \cite{morin20102}, and Flach and Morin \cite{flachmorin2010} has been done towards a definition of Weil-\'etale cohomology for schemes of finite type over $\Spec\ \Z$.  These approaches have been partially successful in giving a Weil-\'etale cohomological interpretation of special values of zeta functions and $L$-functions of such schemes, but a fully satisfying global theory is still lacking.  Hopefully a better understanding of Weil-\'etale cohomology over $p$-adic fields will help provide a link between the respective theories over finite and  global fields.

%%%%%%%%%%%%%%%%%%%%%%%%%%%
\subsection{Notation}
If $R$ is a commutative ring, then $\mathcal{D}(R)$ denotes the bounded derived category of $R$-modules.  If $G$ is a discrete group, then $\mathcal{D}(G)$ denotes the bounded derived category of $G$-modules; if $G$ is profinite, then we will always restrict our attention to discrete, continuous $G$-modules, unless stated otherwise.  If $G$ is a discrete group and $M$ is a $G$-module, then by $H^*(G,M)$ we mean the traditional group cohomology of $G$ with coefficients in $M$.  If $G$ is profinite, or an extension of a discrete group by a profinite group, and $M$ is a discrete $G$-module, then by $H^*(G,M)$ we mean Galois cohomology in the sense of \cite{serre2002}.

We briefly recall the notion of cohomological dimension.  Let $G$ be a discrete group, or an extension of a discrete group by a profinite group.  The \textit{cohomological dimension} of $G$ is defined to be the smallest integer $n$ such that for all $m>n$ and all torsion $G$-modules $M$, we have $H^m(G,M)=0$ (provided such an $n$ exists).  We write $\text{cd}(G)$ for the cohomological dimension of $G$.  The \textit{strict cohomological dimension} of $G$ is defined to be the smallest integer $n$ such that for all $m>n$ and all $G$-modules $M$, we have $H^m(G,M)=0$ (provided such an $n$ exists); we denote the strict cohomological dimension of $G$ by $\text{scd}(G)$.

If $f:X\rightarrow Y$ is a map between two cochain complexes, we use $f^i$ to denote the induced map in $i^{th}$ cohomology.  We will often write an exact triangle $X\rightarrow Y\rightarrow Z\rightarrow X[1]$ as simply $X\rightarrow Y\rightarrow Z$, with the $X[1]$ being implied.

If $M$ is an abelian group and $n$ is an integer, we use $M[n]$ and $M/n$ to denote the kernel and cokernel, respectively, of the multiplication-by-$n$ maps on $M$.  If $M$ and $N$ are abelian groups with some possible extra structure (for example, $M$ and $N$ could be $G$-modules), then $\Hom(M,N)$ will always mean $\Hom_\Z(M,N)$, and similarly for $\Ext$ and $\otimes$.  When working in the derived category of abelian groups, we will write $R\Hom$ for $R\Hom_{\mathcal{D}(\Z)}$.  If $X$ and $Y$ are objects in $\mathcal{D}(\Z)$, we will write $\Ext^i(X,Y)$ for their $i^{th}$ hyperext group, see Definition 10.7.1 of \cite{weibel1994}.

For a topological abelian group $A$, we define $A^*=\Hom_{\text{cont}}(A,\QZ)$, where $\QZ$ has the discrete topology.  If $A$ is the group of rational points of some commutative algebraic group scheme over a local field, it will be understood that $A$ is endowed with the natural topology coming from the local field.

If $\frak{X}$ is any Grothendieck site, we denote by $\mathcal{S}(\frak{X})$ the category of sheaves of abelian groups on $\frak{X}$.  We let $\mathcal{D}(\frak{X})$ denote the bounded derived category of $\mathcal{S}(\frak{X})$.

\section{Weil Groups of Finite Fields}
\label{sec:Weil Groups of Finite Fields}

We begin with a duality theorem for the cohomology of the Weil group $\frak{w}$ of a finite field $k$, and restate the theorem as a duality theorem in Weil-\'etale cohomology.  Since $\frak{w}\simeq \Z$ as abstract groups, this is mostly an exercise in homological algebra.  It will be useful to have at the ready the following explicit description of the $\Z$-dual of a complex.

%explicit description of cohomology of the Z-dual of a complex
\begin{proposition}\label{rhomz} Let $C$ be a bounded cochain complex of abelian groups, considered as an object in $\mathcal{D}(\Z)$.  Let $\Z[-n]$ be the cochain complex with $\Z$ in degree $n$ and $0$ everywhere else.  Then for all $i$ we have short exact sequences
\begin{equation}
0\rightarrow \Ext(H^{n-i+1}(C),\Z)\rightarrow \Ext^i(C,\Z[-n])\rightarrow \Hom(H^{n-i}(C),\Z)\rightarrow 0.
\end{equation}
\end{proposition}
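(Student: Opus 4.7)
The plan is to reduce to the classical universal coefficient theorem by replacing $C$ with a well-behaved resolution. Since $\Z$ is a PID and $C$ is bounded, every abelian group has projective dimension $\leq 1$, so $C$ is quasi-isomorphic in $\mathcal{D}(\Z)$ to a bounded cochain complex $P^\bullet$ of free abelian groups. This is the standard reduction, and I would carry it out first so that derived Hom can be replaced by an honest Hom-complex.

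With $P^\bullet$ chosen, I would then compute $R\Hom(C,\Z[-n])$ directly as the total Hom-complex $\Hom^\bullet(P^\bullet,\Z[-n])$. Because $\Z[-n]$ is concentrated in a single degree, the product defining this complex collapses: the term in degree $i$ is simply $\Hom(P^{n-i},\Z)$, with differential induced by precomposition with $d_P^{n-i-1}$. In other words, the Hom-complex is the $\Z$-linear dual of $P^\bullet$, with its grading reflected and shifted by $n$; formally, if $K^\bullet$ denotes the cochain complex $K^p=\Hom(P^{-p},\Z)$, then $\Hom^\bullet(P^\bullet,\Z[-n])\cong K^\bullet[-n]$, so $\Ext^i(C,\Z[-n]) = H^{i-n}(K^\bullet)$.

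At this stage I apply the classical universal coefficient theorem for the $\Z$-linear dual of a bounded complex of free abelian groups: for every $p$ there is a short exact sequence
$$0\to \Ext(H^{-p+1}(P^\bullet),\Z)\to H^p(K^\bullet)\to \Hom(H^{-p}(P^\bullet),\Z)\to 0,$$
which can be obtained either by chasing the short exact sequence of cycles/boundaries in $P^\bullet$ or, equivalently, by invoking the hyperext spectral sequence
$$E_2^{p,q}=\Ext^p(H^{-q}(C),\Z)\Rightarrow \Ext^{p+q}(C,\Z),$$
which collapses onto two columns because $\Z$ has injective dimension $1$ as an abelian group. Setting $p=i-n$ and using $H^*(P^\bullet)=H^*(C)$ turns this into the sequence claimed in the proposition.

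The entire argument is formal once the reduction to a free complex is made; the only step that requires real care is tracking the index shift, since the dual-complex construction reflects degrees and the $\Z[-n]$ translation then shifts them. I would therefore spell out the identification $\Hom^\bullet(P^\bullet,\Z[-n])^i=\Hom(P^{n-i},\Z)$ explicitly, so that the UCT indices $H^{n-i}(C)$ and $H^{n-i+1}(C)$ appear in the right places. No step is genuinely difficult; the proposition is a straightforward universal coefficient theorem packaged for later use.
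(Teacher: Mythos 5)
Your proof is correct, but it takes a different route from the paper. You resolve the \emph{first} argument: replace $C$ by a bounded quasi-isomorphic complex $P^\bullet$ of free abelian groups (possible since $\Z$ is a PID), compute $R\Hom(C,\Z[-n])$ as the honest Hom-complex $\Hom^\bullet(P^\bullet,\Z[-n])$, identify it with a reflected-and-shifted $\Z$-dual $K^\bullet[-n]$, and then invoke the classical universal coefficient theorem for the dual of a bounded complex of free groups. The paper instead resolves the \emph{second} argument: it applies $R\Hom(C,-)$ to the injective resolution $0\to\Z\to\Q\to\QZ\to0$ of $\Z$, uses the exactness of $\Hom(-,\Q)$ and $\Hom(-,\QZ)$ to compute $\Ext^i(C,\Q[-n])=\Hom(H^{n-i}(C),\Q)$ and $\Ext^i(C,\QZ[-n])=\Hom(H^{n-i}(C),\QZ)$ directly, and then reads the claimed short exact sequence off the resulting long exact sequence (using that the kernel and cokernel of $\Hom(A,\Q)\to\Hom(A,\QZ)$ are $\Hom(A,\Z)$ and $\Ext(A,\Z)$). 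Both arguments are standard; the paper's is shorter because it avoids choosing a free replacement of $C$ and defers all the work to the ordinary UCT for a single group, whereas yours makes the mechanism of the $\Ext$/$\Hom$ decomposition visible through the two-column hyperext spectral sequence and gives an explicit model for $R\Hom(C,\Z[-n])$, which can be useful when one wants to track maps rather than just groups. Your index bookkeeping (the identification $\Hom^\bullet(P^\bullet,\Z[-n])^i=\Hom(P^{n-i},\Z)$ and the substitution $p=i-n$) is carried out correctly and lands on the stated sequence.
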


\begin{proof} This follows easily by applying $R\Hom(C,-)$ to the exact sequence $0\rightarrow \Z\rightarrow \Q\rightarrow \QZ\rightarrow 0$ and taking cohomology.
\end{proof}

%%%%%%%%%%%%%%%%%%%%%%%%%%%%%%%%%%%%%
\subsection{Duality for Finitely Generated Modules}

%all-purpose gamma_{0} duality
\begin{proposition}\label{cupproduct}  Let $R$ be a commutative ring with $1$, and suppose that $M$ is an $R[\frak{w}]$-module.  Let $D$ be an injective $R$-module on which $\frak{w}$ acts trivially, so that $H^{1}( \frak{w},D)=D$.  Then the cup-product pairing
$$H^{i}( \frak{w},M)\otimes_{R} H^{1-i}( \frak{w},\Hom_{R}(M,D))\rightarrow D$$
induces isomorphisms $H^{1-i}( \frak{w},\Hom_{R}(M,D))\simeq \Hom_{R}(H^{i}( \frak{w},M),D)$ for $i=0,1$.
\end{proposition}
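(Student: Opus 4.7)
The plan is to exploit the isomorphism $\frak{w} \simeq \Z$ and compute both sides explicitly via the standard length-two free resolution $0 \to \Z[\frak{w}] \xrightarrow{\sigma - 1} \Z[\frak{w}] \to \Z \to 0$, where $\sigma$ is a topological generator of $\frak{w}$. For any $\frak{w}$-module $N$ this yields $H^0(\frak{w},N) = \ker(\sigma-1)$ and $H^1(\frak{w},N) = \coker(\sigma-1)$, with higher cohomology vanishing, so both sides of the pairing become concrete modules and the cup product is computed by the usual explicit formulas on this resolution.

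For $i = 1$ the identification is essentially formal. The group $H^0(\frak{w}, \Hom_R(M,D))$ consists of $R[\frak{w}]$-linear maps $M \to D$; since $\frak{w}$ acts trivially on $D$, any such map factors through $M/(\sigma-1)M = H^1(\frak{w},M)$, giving a canonical isomorphism $\Hom_{R[\frak{w}]}(M,D) \simeq \Hom_R(H^1(\frak{w},M),D)$. I would then check that the cup-product pairing, unwound on the resolution, sends $f \otimes [m]$ to $f(m)$, so that the induced map agrees with this natural identification. No hypothesis on $D$ beyond trivial $\frak{w}$-action is used in this case.

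For $i = 0$ the cup product, similarly unwound, identifies the induced map with the restriction $[f] \mapsto f|_{M^{\frak{w}}}$ from $\Hom_R(M,D)/(\sigma-1)\Hom_R(M,D)$ into $\Hom_R(M^{\frak{w}},D)$. Well-definedness follows from the identity $((\sigma-1)f)(m) = f(\sigma^{-1}m) - f(m)$, which vanishes on $M^{\frak{w}}$. Surjectivity is injectivity of $D$ applied to the inclusion $M^{\frak{w}} \hookrightarrow M$. For injectivity, if $f \in \Hom_R(M,D)$ vanishes on $M^{\frak{w}} = \ker(\sigma^{-1}-1)$, then $f$ descends to an $R$-linear map on the image of $\sigma^{-1}-1 = -\sigma^{-1}(\sigma-1)$; injectivity of $D$ extends this back to some $g \in \Hom_R(M,D)$, and a short manipulation exhibits $f$ as lying in $(\sigma-1)\Hom_R(M,D)$.

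The argument is essentially formal linear algebra once the cup product has been made explicit, so there is no serious obstacle. The main thing to get right is the bookkeeping: the $\frak{w}$-action on $\Hom_R(M,D)$ is twisted by $\sigma^{-1}$, and one must confirm that the cocycle formula for the cup product really matches restriction in degree $0$ and evaluation in degree $1$ under the identifications above. The injectivity of $D$ is used only at the single surjectivity/extension step in the $i=0$ case.
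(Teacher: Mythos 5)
Your proof is correct and uses essentially the same ingredients as the paper: the $\sigma-1$ description of $\frak{w}$-cohomology, injectivity of $D$ for the extension steps, and the identification of cup product with restriction in degree $0$ and evaluation in degree $1$. The paper packages this slightly more compactly by applying $\Hom_R(-,D)$ to the four-term exact sequence $0\to M^{\frak{w}}\to M\xrightarrow{\sigma-1}M\to M_{\frak{w}}\to 0$ (exactness being preserved since $D$ is injective), which handles both $i=0$ and $i=1$ in one stroke; your element-wise treatment is a valid unwinding of the same idea, though it uses injectivity of $D$ in both the surjectivity and the injectivity steps of the $i=0$ case, not just one as you state at the end.
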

\begin{proof}  Applying the functor $\Hom_{R}(-,D)$ to the exact sequence $0\rightarrow M^{ \frak{w}}\rightarrow M\stackrel{\sigma-1}{\rightarrow}M\rightarrow M_{ \frak{w}}\rightarrow 0$ we obtain the exact sequence
$$0\rightarrow \Hom_{R}(M_{ \frak{w}},D)\rightarrow \Hom_{R}(M,D)\stackrel{\sigma-1}{\rightarrow}\Hom_{R}(M,D)\rightarrow \Hom_{R}(M^{ \frak{w}},D)\rightarrow 0.$$
The proposition now follows from the fact that $H^{0}( \frak{w},\Hom_{R}(M,D))$ and $H^{1}( \frak{w},\Hom_{R}(M,D))$ are, respectfully, the kernel and cokernel of the map $\sigma-1$ from $\Hom_{R}(M,D)$ to itself.
\end{proof}

%example of finite abelian groups
\begin{example}\label{gamma0finiteduality} Let $R=\Z$, $D=\QZ$, and suppose that $M$ is finite.  The canonical isomorphism $M=M^{**}$ yields the following perfect pairings of finite abelian groups:
$$%\begin{equation}
H^{i}( \frak{w},M)\otimes H^{1-i}( \frak{w},M^*)\rightarrow \QZ.
$$%\end{equation}
\end{example}

%example of vector spaces over fields of characteristic zero
\begin{example}\label{gamma0vsduality} Let $R$ be a field $F$ of characteristic zero, and suppose that $M=V$ is a finite-dimensional representation of $ \frak{w}$.  Let $D=F$ be the trivial representation, and consider the dual representation $\Hom_F(V,D)$.  By (\ref{cupproduct}) we have the following perfect pairings of vector spaces over $F$:
$$%\begin{equation}
H^i(\frak{w},V)\otimes_F H^{1-i}(\frak{w},\Hom_F(V,F))\rightarrow F.
$$%\end{equation}
\end{example}

Suppose that $M$ is a bounded complex of $\frak{w}$-modules, considered as an object in $\mathcal{D}(\frak{w})$.  Let us define its \textit{dual complex} by $M^D:= R\Hom(M,\Z)\in\mathcal{D}(\frak{w})$, which comes equipped with a canonical pairing $M\otimes^L M^D\rightarrow \Z$ in $\mathcal{D}(\frak{w})$.  %If the cohomology modules of $M$ are finitely generated as abelian groups, then by Verdier Duality (see \cite{lichtenbaum2005}, Proposition 4.1) this pairing induces a canonical isomorphism $M=M^{DD}$ of objects of $\mathcal{D}(\frak{w})$.
The projection map $R\Gamma_{\frak{w}}(\Z)\rightarrow\Z[-1]$ gives a cup-product pairing
$$%\begin{equation}
R\Gamma_{\frak{w}}(M)\otimes^L R\Gamma_{\frak{w}}(M^D)\rightarrow \Z[-1]
$$%\end{equation}
which induces a map
\begin{equation}\label{zcup}
\psi(M):R\Gamma_{\frak{w}}(M^D)\rightarrow R\Hom(R\Gamma_{\frak{w}}(M),\Z[-1]).
\end{equation}

%proving duality for finite M
\begin{proposition}\label{finite} Suppose that $M$ is finite.  Then $\psi(M)$ is an isomorphism.
\end{proposition}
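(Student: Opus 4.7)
The strategy is to exploit the finiteness of $M$ to make both sides of $\psi(M)$ explicit, and then to identify the induced maps on cohomology with the Pontryagin pairings of Example \ref{gamma0finiteduality}.

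First I would identify $M^D$. Since $M$ is finite, $\Hom(M,\Z)=0$, and applying $R\Hom(M,-)$ to $0\to\Z\to\Q\to\QZ\to 0$ gives $\Ext(M,\Z)\simeq\Hom(M,\QZ)=M^*$, so $M^D\simeq M^*[-1]$ in $\mathcal{D}(\frak{w})$. Consequently $R\Gamma_{\frak{w}}(M^D)\simeq R\Gamma_{\frak{w}}(M^*)[-1]$ has cohomology concentrated in degrees $1$ and $2$, equal to $H^0(\frak{w},M^*)$ and $H^1(\frak{w},M^*)$ respectively. On the other side, $H^i(\frak{w},M)$ is finite for $i=0,1$ and vanishes otherwise, so Proposition \ref{rhomz} with $n=1$ forces $R\Hom(R\Gamma_{\frak{w}}(M),\Z[-1])$ to have cohomology only in degrees $1$ and $2$, equal respectively to $\Ext(H^1(\frak{w},M),\Z)=H^1(\frak{w},M)^*$ and $\Ext(H^0(\frak{w},M),\Z)=H^0(\frak{w},M)^*$. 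By Example \ref{gamma0finiteduality} the two sides are abstractly isomorphic; it remains to check that $\psi(M)$ realizes these isomorphisms on cohomology.

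To verify this I would trace the defining pairing through the identification $M^D\simeq M^*[-1]$. The canonical evaluation $M\otimes^L M^D\to\Z$ corresponds, up to a shift, to the composite of the natural pairing $M\otimes^L M^*\to\QZ$ with the Bockstein $\QZ\to\Z[1]$ arising from $0\to\Z\to\Q\to\QZ\to 0$. Applying $R\Gamma_{\frak{w}}$ and composing with the projection $R\Gamma_{\frak{w}}(\Z)\to\Z[-1]$, the maps induced on $H^1$ and $H^2$ of $\psi(M)$ become adjoints of the cup-product pairings $H^i(\frak{w},M)\otimes H^{1-i}(\frak{w},M^*)\to\QZ$ of Example \ref{gamma0finiteduality}, and therefore are isomorphisms.

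The main obstacle is the bookkeeping in the last step: the map $R\Gamma_{\frak{w}}(\QZ)\to R\Gamma_{\frak{w}}(\Z)[1]$ is zero on cohomology, so the content of $\psi(M)$ on cohomology lives inside the $\Ext$-terms of Proposition \ref{rhomz} rather than arising from a naive cup product of cohomology classes. Handling this cleanly and confirming that the resulting map matches the Pontryagin pairing is the only nontrivial point; no further input beyond Example \ref{gamma0finiteduality} is required.
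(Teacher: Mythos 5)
Your proof takes essentially the same route as the paper's: both identify $M^D\simeq M^*[-1]$, use Proposition \ref{rhomz} to reduce the $\Ext$ terms to Pontryagin duals of the finite groups $H^j(\frak{w},M)$, and then match $\psi(M)$ on cohomology with the $\QZ$-valued cup-product pairings of Example \ref{gamma0finiteduality}. Your observation that the Bockstein $\QZ\to\Z[1]$ induces zero on $\frak{w}$-cohomology — so that the content of $\psi(M)$ lands in the $\Ext$-filtration term of Proposition \ref{rhomz} rather than arising from a naive $\Z$-valued cup product of classes — is correct and makes explicit a subtlety the paper's two-line proof glosses over.
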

\begin{proof}  Because $M$ is finite, we have identifications $M^D=M^*[-1]$ and $\Ext^i(R\Gamma_{\frak{w}}(M),\Z[-1])=H^{2-i}(\frak{w},M)^*,$ the second of which follows from (\ref{gamma0finiteduality}).  The map $\psi(M)^i$ is the map $H^{i-1}(\frak{w},M^*)\rightarrow H^{2-i}(\frak{w},M)^*$ induced by cup-product, which is an isomorphism by (\ref{gamma0finiteduality}).
\end{proof}

%proving duality for free fg M
\begin{proposition}\label{free}  Suppose that $M$ is free and finitely generated as an abelian group.  Then $\psi(M)$ is an isomorphism.  
\end{proposition}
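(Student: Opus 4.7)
The plan is to reduce $\psi(M)$ to a self-map of a single explicit two-term complex and then to check that it is a quasi-isomorphism. Since $M$ is free and finitely generated as an abelian group, $M^D = R\Hom(M,\Z) = \Hom(M,\Z)$ is concentrated in degree $0$ and carries the contragredient $\frak{w}$-action. Writing $N = \Hom(M,\Z)$ and letting $\tau$ denote the action of $\sigma$ on $N$, the source of $\psi(M)$ is represented by the length-two complex $[N \xrightarrow{\tau-1} N]$ in degrees $0$ and $1$, with cohomology $N^{\frak{w}}$ and $N_{\frak{w}}$.

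For the target, I would observe that $R\Gamma_{\frak{w}}(M) = [M \xrightarrow{\sigma-1} M]$ is a bounded complex of finitely generated free (hence projective) abelian groups, so $R\Hom(R\Gamma_{\frak{w}}(M),\Z[-1])$ may be computed by applying $\Hom(-,\Z[-1])$ termwise. A direct computation then shows that this is the two-term complex $[N \xrightarrow{-(\tau-1)} N]$ in degrees $0$ and $1$, which is isomorphic as a complex to the source via $(\mathrm{id}, -\mathrm{id})$. Thus both sides of $\psi(M)$ are represented by the same complex up to a sign on the differential.

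It then suffices to check that $\psi(M)$ induces isomorphisms on the only possibly nonzero cohomology groups, namely $H^0$ and $H^1$; indeed, Proposition \ref{rhomz} with $n=1$ shows that $H^2$ of the target vanishes, since $M^{\frak{w}}$ is free and hence $\Ext(M^{\frak{w}},\Z)=0$. In degree $0$, $\psi(M)^0$ is the canonical identification $N^{\frak{w}} = \Hom_{\frak{w}}(M,\Z) \cong \Hom(M_{\frak{w}},\Z)$ obtained by passing to coinvariants, exactly as in the proof of Proposition \ref{finite}.

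The main obstacle is the degree-$1$ case. Proposition \ref{rhomz} produces a short exact sequence $0 \to \Ext(M_{\frak{w}},\Z) \to H^1(\text{target}) \to \Hom(M^{\frak{w}},\Z) \to 0$, while applying $\Hom(-,\Z)$ to the two short exact sequences $0 \to M^{\frak{w}} \to M \to (\sigma-1)M \to 0$ and $0 \to (\sigma-1)M \to M \to M_{\frak{w}} \to 0$ produces the parallel sequence $0 \to \Ext(M_{\frak{w}},\Z) \to N_{\frak{w}} \to \Hom(M^{\frak{w}},\Z) \to 0$ for the source. The concluding step is to verify that $\psi(M)^1$ respects these two-step filtrations and restricts to the identity on their subquotients, which follows by naturality of the cup-product once the sequences are traced through the chain-level definitions.
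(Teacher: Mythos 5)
Your approach is genuinely different from the paper's: you unwind both sides into explicit two-term complexes and try to match them by hand, whereas the paper (following Geisser, Lemma 7.5 of \cite{geisser2010}) gives a one-paragraph derived-adjunction argument. Specifically, the paper observes that $L(-)_{\frak{w}}\simeq R\Gamma_{\frak{w}}(-)[1]$ and that $\Hom_{\frak{w}}(M,\Z)\simeq\Hom(M_{\frak{w}},\Z)$ by adjunction, and then derives both sides to get $R\Gamma_{\frak{w}}(M^D)\simeq R\Hom_{\frak{w}}(M,\Z)\simeq R\Hom(R\Gamma_{\frak{w}}(M),\Z[-1])$ directly, with no chain-level computation at all. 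Your explicit calculation has a small slip worth noting: if $\tau$ is the contragredient action on $N=\Hom(M,\Z)$, then $\tau=(\sigma^{-1})^*$, so termwise dualization of $[M\xrightarrow{\sigma-1}M]$ (followed by the degree shift) produces the differential $\pm(\tau^{-1}-1)$, not $-(\tau-1)$. Since $\tau^{-1}-1=\tau^{-1}(1-\tau)$ and $\tau^{-1}$ is an automorphism of $N$, this doesn't affect the cohomology, but the literal complexes are not identical up to a sign on the differential.

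The real issue is your degree-1 step, which is where the content of the proposition actually lives. You correctly identify that $H^1$ of the target sits in $0\to\Ext(M_{\frak{w}},\Z)\to\Ext^1(R\Gamma_{\frak{w}}M,\Z[-1])\to\Hom(M^{\frak{w}},\Z)\to 0$ via Proposition \ref{rhomz}, and you construct a parallel filtration on $N_{\frak{w}}$, both of which check out. But the claim that $\psi(M)^1$ ``respects these two-step filtrations and restricts to the identity on their subquotients, which follows by naturality of the cup-product once the sequences are traced through the chain-level definitions'' is precisely the statement that needs to be proven; invoking ``naturality'' does not establish it. Preservation of the $\Ext(M_{\frak{w}},\Z)$ piece is automatic (it is the torsion subgroup on both sides), and the induced map on the free quotient $\Hom(M^{\frak{w}},\Z)$ is plausibly the evaluation pairing, but the induced map on the $\Ext(M_{\frak{w}},\Z)$ piece is exactly the kind of chain-level comparison that must actually be carried out (or handled indirectly, e.g.\ by comparing with the maps $\psi_\Q(M)$ and $\psi_{\QZ}(M)$ via the triangle $\Z\to\Q\to\QZ$ and then applying the Five Lemma, as in the diagram (\ref{fivelemma}) in the $p$-adic case, where Proposition \ref{cupproduct} applies because $\Q$ and $\QZ$ are injective). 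As written, the degree-1 case is asserted rather than proven, and that is a genuine gap.
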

\begin{proof}
(The author thanks Thomas Geisser for pointing out this argument, which is presented in Lemma 7.5 of \cite{geisser2010}.)  Let $L(-)_{\frak{w}}$ be the derived functor of $M\mapsto M_{\frak{w}}$.  There is a natural equivalence $L(-)_{\frak{w}}\simeq R\Gamma_{\frak{w}}(-)[1]$.  Combined with the adjunction map $\Hom_{\frak{w}}(M,\Z)\simeq \Hom(M_{\frak{w}},\Z)$ induced by the evaluation pairing, taking derived functors yields the isomorphisms
$$%\begin{equation}
R\Hom_{\frak{w}}(M,\Z)\simeq R\Hom(L(M)_\frak{w},\Z)\simeq R\Hom(R\Gamma_{\frak{w}}(M)[1],\Z)\simeq R\Hom(R\Gamma_{\frak{w}}(M),\Z[-1])
$$%\end{equation}
which gives the result.
\end{proof}

\begin{theorem}\label{fggamma0duality}  Let $M$ be a bounded complex of $ \frak{w}$-modules, whose cohomology groups are finitely generated as abelian groups.  Then the map $\psi(M)$ of (\ref{zcup}) is an isomorphism.
\end{theorem}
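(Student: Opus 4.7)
The plan is a standard dévissage that bootstraps Propositions \ref{finite} and \ref{free} up to arbitrary bounded complexes with finitely generated cohomology. All the relevant functors are triangulated: $R\Gamma_{\frak{w}}$, the dualization $M \mapsto M^D = R\Hom(M,\Z)$, and $R\Hom(-,\Z[-1])$ on $\mathcal{D}(\Z)$ all preserve exact triangles. Combined with the naturality of $\psi$, this means that for any exact triangle $M' \to M \to M''$ in $\mathcal{D}(\frak{w})$, the maps $\psi(M')$, $\psi(M)$, $\psi(M'')$ fit into a morphism of exact triangles in $\mathcal{D}(\Z)$, so by the triangulated five lemma any two of them being isomorphisms forces the third to be one as well.

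First I would reduce to the case where $M$ is quasi-isomorphic to a single finitely generated $\frak{w}$-module placed in one degree. Letting $n$ be the top nonzero cohomological degree of $M$, the canonical truncation triangle
$$
\tau_{\leq n-1} M \to M \to H^n(M)[-n]
$$
has both outer terms with finitely generated cohomology, and $\tau_{\leq n-1} M$ has strictly fewer nonzero cohomology groups than $M$. Induction on the number of nonzero cohomology groups, together with the five-lemma setup above, reduces the problem to showing that $\psi(N[-n])$ is an isomorphism for every finitely generated $\frak{w}$-module $N$. Since $R\Gamma_{\frak{w}}$ commutes with shifts and $(N[-n])^D \simeq N^D[n]$, one checks that $\psi(N[-n])$ agrees with $\psi(N)$ up to the usual shift, so it suffices to treat $\psi(N)$ for a single finitely generated $\frak{w}$-module $N$.

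For such an $N$, I would use the $\frak{w}$-equivariant short exact sequence
$$
0 \to N_{\text{tor}} \to N \to N/N_{\text{tor}} \to 0,
$$
which is a sequence of $\frak{w}$-modules because the $\frak{w}$-action preserves the torsion subgroup. Here $N_{\text{tor}}$ is a finite abelian group and $N/N_{\text{tor}}$ is free of finite rank, so $\psi(N_{\text{tor}})$ is an isomorphism by Proposition \ref{finite} and $\psi(N/N_{\text{tor}})$ is an isomorphism by Proposition \ref{free}. A final application of the triangulated five lemma to the morphism of triangles induced by $\psi$ then gives that $\psi(N)$ is an isomorphism, completing the proof.

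The main obstacle is really just bookkeeping: verifying that $M \mapsto M^D$ and $R\Gamma_{\frak{w}}$ send the relevant triangles to triangles, and that the naturality squares required to apply the five lemma commute. No new duality input is needed beyond the two base cases already established.
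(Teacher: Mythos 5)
Your proposal matches the paper's proof: the paper also reduces to a single finitely generated $\frak{w}$-module in degree zero by induction on the length of the complex, then splits off the torsion via $0\to M_{\text{tors}}\to M\to M/M_{\text{tors}}\to 0$ and invokes Propositions~\ref{finite} and~\ref{free}. You have simply spelled out the triangulated five-lemma bookkeeping that the paper leaves implicit.
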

\begin{proof}
This follows easily from the above two propositions; one first proves it first for $M$ concentrated in degree $0$ by considering the exact sequence $0\rightarrow M_{\text{tors}}\rightarrow M\rightarrow M/M_{\text{tors}}\rightarrow 0$.  The general result follows by induction on the length of the complex.
\end{proof}

%\begin{proof}  Suppose that $M$ is a $ \frak{w}$-module which is finitely generated as an abelian group.  Write $T$ for the torsion subgroup of $M$ and $F$ for the quotient $M/T$ so that we have a short exact sequence $0\rightarrow T\rightarrow M\rightarrow F\rightarrow 0$
%of $\frak{w}$-modules.  We have a map of exact triangles
%\begin{equation}
%\xymatrix{
%R\Gamma_{\frak{w}}(F^D)\ar[d]\ar[r]^-{\psi(F)} &R\Hom(R\Gamma_{\frak{w}}(F),\Z[-1])\ar[d] \\
%R\Gamma_{\frak{w}}(M^D)\ar[d]\ar[r]^-{\psi(M)}\ar[r] &R\Hom(R\Gamma_{\frak{w}}(M),\Z[-1])\ar[d] \\
%R\Gamma_{\frak{w}}(T^D)\ar[r]^-{\psi(T)} &R\Hom(R\Gamma_{\frak{w}}(T),\Z[-1])
%}
%\end{equation}
%in $\mathcal{D}(\Z)$.  By the previous two propositions, $\psi(T)$ and $\psi(F)$ are isomorphisms.  Therefore $\psi(M)$ is an isomorphism.  The result for a general complex $M$ follows by induction on the length of the complex.
%\end{proof}

%%%%%%%%%%%%%%%%%%%%%%%%%%%%%%%%%%%%%%%%%
\subsection{Weil-\'etale Cohomology over Finite Fields}

Let us restate the duality theorem of the last section in terms of Weil-\'etale cohomology.  For a scheme $X$ which is finite type over $k$, let $X_W$ denote $X$ endowed with the Weil-\'etale topology, as defined in \cite{lichtenbaum2005}.  Let $\Gamma_X:\mathcal{S}(X_W)\rightarrow \mathcal{A}b$ be the global sections functor, and let $R\Gamma_X:\mathcal{D}(X_W)\rightarrow \mathcal{D}(\Z)$ be its derived functor.

Now let $X=\Spec\ k$.  By Proposition 2.2 of \cite{lichtenbaum2005}, Weil-\'etale sheaves on $X_W$ are simply $\frak{w}$-modules.  Theorem (\ref{fggamma0duality}) has the following rephrasing in terms of Weil-\'etale cohomology:

\begin{theorem}
Let $F$ be a bounded complex of Weil-\'etale sheaves on $X=\Spec\ k$, such that the cohomology sheaves $H^i(F)$ correspond to finitely generated $\frak{w}$-modules.  Let $F^D = R\underline{\Hom}_X(F,\Z)$.  Then the cup-product pairing
$$%\begin{equation}
R\Gamma_X(F)\otimes^L R\Gamma_X(F^D)\rightarrow \Z[-1]
$$%\end{equation}
induces an isomorphism
\begin{equation}
R\Gamma_X(F^D)\rightarrow R\Hom(R\Gamma_X(F),\Z[-1])
\end{equation}
in the derived category of abelian groups.\hfill\qed
\end{theorem}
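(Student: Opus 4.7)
The plan is to reduce this statement directly to Theorem \ref{fggamma0duality} via the equivalence of categories between $\mathcal{S}(X_W)$ and the category of $\frak{w}$-modules coming from Proposition 2.2 of \cite{lichtenbaum2005}. Under this equivalence, a sheaf $F$ corresponds to a $\frak{w}$-module $M = \Gamma_X(F)$, the global sections functor $\Gamma_X$ matches the fixed-points functor $\Gamma_\frak{w}$, and hence on the bounded derived categories one obtains a canonical equivalence $\mathcal{D}(X_W) \simeq \mathcal{D}(\frak{w})$ identifying $R\Gamma_X$ with $R\Gamma_\frak{w}$.

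First I would check that the internal Hom sheaf $R\underline{\Hom}_X(F,\Z)$ corresponds, under this equivalence, to $R\Hom(M,\Z) = M^D$ viewed as a complex of $\frak{w}$-modules. This is the standard fact that an equivalence of abelian categories preserves internal $\Hom$ and Ext; one just needs to verify that $\Z$ on each side corresponds under the equivalence (which it does, since the constant sheaf $\Z$ on $X_W$ has sections $\Z$ with trivial $\frak{w}$-action). Second, I would verify that the cup-product pairing in Weil-étale cohomology (which is constructed sheaf-theoretically via $R\underline{\Hom}_X(F,\Z) \otimes^L F \to \Z$ followed by $R\Gamma_X$) transports to the cup-product pairing in $\frak{w}$-cohomology used in \eqref{zcup}. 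This compatibility is formal once one unpacks the construction of both pairings as instances of the evaluation map composed with application of $R\Gamma$.

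Once these two compatibilities are in place, the map induced by the Weil-étale cup-product pairing is literally the map $\psi(M)$ of \eqref{zcup}. The hypothesis that the cohomology sheaves $H^i(F)$ correspond to finitely generated $\frak{w}$-modules translates precisely to the hypothesis of Theorem \ref{fggamma0duality}, which then gives that $\psi(M)$ is an isomorphism in $\mathcal{D}(\Z)$, proving the statement.

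The only real obstacle is the bookkeeping to identify the two cup-product pairings; the translation of objects (sheaves versus modules, internal $\Hom$ versus $\Hom_\Z$) is immediate from the equivalence of abelian categories, but one must be careful that the sign and shift conventions in the sheaf-theoretic pairing $R\Gamma_X(F)\otimes^L R\Gamma_X(F^D) \to \Z[-1]$ agree with those of \eqref{zcup}. This is a routine but necessary verification.
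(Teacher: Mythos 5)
Your proposal is correct and takes exactly the approach the paper intends: the paper presents this theorem as a direct rephrasing of Theorem \ref{fggamma0duality} via the equivalence $\mathcal{S}(X_W)\simeq\frak{w}\text{-Mod}$ from Proposition 2.2 of \cite{lichtenbaum2005}, giving no further proof. Your added care about matching the internal $\Hom$ and cup-product pairings under the equivalence is exactly the bookkeeping the paper leaves implicit.
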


We remind the reader of Lichtenbaum's duality theorem for the Weil-\'etale cohomology of curves, proved  in \cite{lichtenbaum2005}.  Let $X/k$ be a smooth, geometrically connected curve.  For simplicity we assume $X$ is projective, though Lichtenbaum does not make this assumption.  One has $H^2(X_W,\G_m)=\Z$ and $H^i(X_W,\G_m)=0$ for $i\geq 3$.

\begin{theorem} (\cite{lichtenbaum2005}, Theorem 5.1)
Let $F$ be a locally constant Weil-\'etale sheaf on $X$, representable by a finitely generated abelian group, and let $F^D=R\underline{\Hom}_X(F,\G_m)$.  The cup-product pairing
$$%\begin{equation}
R\Gamma_X(F)\otimes^LR\Gamma_X(F^D)\rightarrow \Z[-2]
$$%\end{equation}
induces an isomorphism
\begin{equation}
R\Gamma_X(F^D)\rightarrow R\Hom(R\Gamma_X(F),\Z[-2])
\end{equation}
in the derived category of abelian groups.  \hfill\qed
\end{theorem}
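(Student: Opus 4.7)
The plan is a d\'evissage reducing the duality to the base cases $F=\Z$ and $F=\Z/n$, followed by direct computation in each case.

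First, I reduce to constant sheaves. Since $F$ is locally constant and representable by a finitely generated abelian group $M$, there is a finite Galois Weil-\'etale cover $\pi:X'\to X$ such that $\pi^{*}F$ is the constant sheaf $M_{X'}$. Finite \'etale pushforward $\pi_{*}$ is exact, satisfies Shapiro's identity $R\Gamma_{X}\circ\pi_{*}=R\Gamma_{X'}$, and is compatible with $R\underline{\Hom}$ via the projection formula. Together these facts reduce the duality for $F$ on $X$ to the duality for $M_{X'}$ on $X'$, which is still a smooth, projective, geometrically connected curve over a finite field. Second, the sequence $0\to M_{\mathrm{tors}}\to M\to M/M_{\mathrm{tors}}\to 0$ induces morphisms of exact triangles on both sides of the map $\psi$ induced by the pairing, so by the five-lemma we are reduced to $M$ torsion and $M$ free, and further d\'evissage leaves $M=\Z$ and $M=\Z/n$.

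For $F=\Z/n$ with $n$ coprime to $\mathrm{char}(k)$, the dual is $F^{D}=\mu_{n}$. Both $R\Gamma_{X}(\Z/n)$ and $R\Gamma_{X}(\mu_{n})$ are torsion complexes, and by the Weil-\'etale-to-\'etale comparison (the scheme analogue of Lemma 1.2 of \cite{lichtenbaum2005}) they agree with the corresponding \'etale cohomology complexes. The cup product becomes the classical one on \'etale cohomology, so the claim reduces to classical Poincar\'e duality for torsion sheaves on the smooth projective curve $X/k$ over a finite field. The $p$-primary part with $p=\mathrm{char}(k)$ is handled analogously using logarithmic de Rham--Witt sheaves.

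For $F=\Z$, the dual is $\G_m$, and here one uses Lichtenbaum's explicit computation that $R\Gamma_{X}(\G_m)$ is concentrated in degrees $0,1,2$ with terms $k^{\times}$, $\Pic(X)$, $\Z$. One computes $R\Gamma_{X}(\Z)$ in parallel from the Leray spectral sequence for $X\to\Spec\ k$ together with the known Weil-\'etale cohomology of $\Spec\ k$ and the \'etale cohomology of $X_{\bar{k}}$. The pairing is cup product followed by the trace $H^{2}(X_{W},\G_m)\cong\Z$, and on graded pieces it restricts to the degree map $\Pic(X)\to\Z\cong\Hom(\Z,\Z)$, the identity $\Z\otimes\Z\to\Z$ in the top degree, and a finite perfect pairing between $\Piczero(X)(k)$ and the appropriate piece of $H^{2}(X_{W},\Z)$. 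The hard part will be precisely this $F=\Z$ case: Proposition \ref{rhomz} reduces matters to a termwise check, but one must verify that the derived cup product composed with the Weil-\'etale trace matches, on each graded piece, the classical perfect pairings coming from Weil reciprocity on $X$ and the self-duality of $\Piczero(X)(k)$; the identification of the extension data in $H^{2}(X_{W},\Z)$, which mixes the finite Picard part with a discrete quotient contributing to degree $3$ of the dual, is where the technical subtleties lie. The reductions of the first two paragraphs and the torsion case are then formal once this is in place.
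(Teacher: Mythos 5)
Note first that the paper does not prove this theorem: it is quoted verbatim as Theorem~5.1 of \cite{lichtenbaum2005} and is used as a known external result, so there is no internal proof against which to compare your sketch. Judged on its own merits, your d\'evissage (locally constant $\to$ constant via a cover, then torsion versus free, then $\Z/n$ and $\Z$) is the right skeleton, and it is close to what Lichtenbaum actually does. But two places are glossed over in a way that hides the real content.

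For the torsion case at the residue characteristic $p$, the reduction to ``classical Poincar\'e duality'' does not go through as stated. In the \'etale topology over a field of characteristic $p$ the sheaf $\mu_{p^m}$ is zero, the Kummer sequence $0\to\mu_{p^m}\to\G_m\to\G_m\to 0$ is not exact, and the dual complex $F^D=R\underline{\Hom}_X(\Z/p^m,\G_m)$ is represented by $\G_m\xrightarrow{\ p^m\ }\G_m$, which is concentrated in degree $1$ with $H^1=\G_m/p^m\neq 0$, rather than being $\mu_{p^m}[-1]$. So the $p$-primary statement is a duality between $H^\ast(X_W,\Z/p^m)$ and $H^\ast(X_W,\G_m/p^m)$, and proving it requires Milne-style flat or logarithmic de Rham--Witt duality (this is where $\nu(1)=W\Omega^1_{\log}$ enters), not the same argument as in the prime-to-$p$ case. ``Handled analogously'' asserts a symmetry that is not there. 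For the $F=\Z$ case, you have correctly identified the crux, but the last sentence (``the reductions\ldots are then formal once this is in place'') undersells what remains: $H^2(X_W,\Z)$ and $H^3(X_W,\Z)$ contain the Pontryagin dual of the finite group $\Piczero(X)(k)$ together with a $\QZ$ contribution, woven together by the Hochschild--Serre spectral sequence as a nontrivial extension, and the map induced by the cup product must be verified to be an isomorphism through these extensions using Proposition~\ref{rhomz}, not merely on the associated graded pieces. That degree-$2$/degree-$3$ bookkeeping, together with the $p$-primary analysis, is the genuinely nonformal content of Lichtenbaum's theorem, and at present your sketch names the difficulty without resolving it.
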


%%%%%%%%%%%%%%%%%%%%%%%%%%%%%%%%%%%%%%%%%%%%%%%%%%%
\subsection{Some Finiteness and Vanishing Lemmas}

The following two lemmas will prove useful in the next chapter, and are generalizations of the additive and multiplicative Hilbert Theorem 90 for finite fields.

\begin{lemma}\label{H0finite}  Let $M$ be a $ \frak{w}$-module which is finitely generated and free as an abelian group.  Then:
\begin{itemize}
\item[(i)] $H^{0}( \frak{w},\Hom(M,\bar{k}^{\times}))$ is finite,
\item[(ii)] $H^{0}( \frak{w},\Hom(M,\bar{k}))$ is finite,
\item[(iii)] and $H^{0}( \frak{w},\Hom(M,U_{L}))$ is profinite.
\end{itemize}
\end{lemma}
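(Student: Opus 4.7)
Since $\frak{w} = \langle\sigma\rangle \cong \Z$, computing $H^{0}(\frak{w},-)$ is just taking $\sigma$-fixed points. I fix a basis realizing $M \cong \Z^{r}$ and let $A \in GL_{r}(\Z)$ denote the matrix of $\sigma$ acting on $M$; then for any abelian group $N$ one has $\Hom(M,N) \cong N^{r}$ with the $\sigma$-action on $N^{r}$ twisted by $A$. Parts (i) and (ii) will be handled directly using the arithmetic of $\bar{k}^{\times}$ and the linear algebra of $\bar{k}$; part (iii) will then be deduced by filtering $U_{L}$ and reducing to the first two parts.

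For (i), unwinding the fixed-point condition on $(\bar{k}^{\times})^{r}$ identifies $H^{0}(\frak{w}, \Hom(M,\bar{k}^{\times}))$ with the multiplicative kernel of the integer matrix $A^{T} - qI$ acting on $(\bar{k}^{\times})^{r}$, where $q = |k|$. The crucial point is that $q$ is not an eigenvalue of $A$: since $\det A = \pm 1$, the characteristic polynomial of $A$ is monic with constant term $\pm 1$, so the rational root theorem forbids any integer of absolute value $\geq 2$ from being a root. Hence $\det(A^{T} - qI) \neq 0$, and Smith normal form realizes the kernel as a finite product $\mu_{d_{1}}(\bar{k}) \times \cdots \times \mu_{d_{r}}(\bar{k})$ of finite groups of roots of unity.

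For (ii), I view $\Hom(M,\bar{k}) \cong \bar{k}^{r}$ as a finite-dimensional $\bar{k}$-vector space on which $\sigma$ acts bijectively (the inverse is $\sigma^{-1} \in \frak{w}$) and $\sigma$-semilinearly with respect to Frobenius on $\bar{k}$. By Galois descent for vector spaces, the fixed-point set is a $k$-form of dimension $r$, and since $|k| = q$ it has exactly $q^{r}$ elements.

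For (iii), I filter $U_{L}$ by the higher unit groups $U_{L}^{(n)} = 1 + \m_{L}^{n}$, whose graded pieces are $U_{L}/U_{L}^{(1)} \cong \bar{k}^{\times}$ and $U_{L}^{(n)}/U_{L}^{(n+1)} \cong \bar{k}$ for $n \geq 1$. Because $M$ is free, $\Hom(M,-)$ is exact, so each short exact sequence of coefficients yields a long exact sequence in $H^{*}(\frak{w},\Hom(M,-))$; combining these with (i) and (ii) gives, by induction on $n$, that each $H^{0}(\frak{w},\Hom(M,U_{L}/U_{L}^{(n)}))$ is sandwiched between finite groups and is therefore finite. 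Both $\Hom(M,-)$ and $H^{0}(\frak{w},-) = (-)^{\sigma}$ commute with arbitrary limits, so $H^{0}(\frak{w},\Hom(M,U_{L})) = \varprojlim_{n} H^{0}(\frak{w},\Hom(M,U_{L}/U_{L}^{(n)}))$ is a cofiltered limit of finite groups, hence profinite. The main delicate step is this final interchange of limits; it is harmless here because the transition maps $U_{L}/U_{L}^{(n+1)} \to U_{L}/U_{L}^{(n)}$ are surjective, eliminating any $\varprojlim^{1}$ obstruction.
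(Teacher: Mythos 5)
Your treatment of (i) and (iii) matches the paper's: for (i) both arguments reduce to the fact that $q$ is not an eigenvalue of the unimodular matrix $A$, so that $A^{T}-qI$ has nonzero determinant (the paper phrases this as factoring through the finite quotient $M/(\sigma-q)M$, you phrase it via Smith normal form); for (iii) both filter $U_{L}$ by $U_{L}^{(n)}$, use exactness of $\Hom(M,-)$, induct on the graded pieces, and pass to an inverse limit of finite groups. Your remark about $\varprojlim^{1}$ in (iii) is a harmless red herring: an inverse limit of finite groups is profinite regardless of whether the transition maps are surjective, and the only limit-interchange needed is that $\Hom(M,-)$ and $(-)^{\sigma}$ are right adjoints, which you already noted. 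Where you genuinely diverge is (ii). The paper fixes a basis, identifies the fixed points with solutions of $Ax=x^{(q)}$ in $\bar{k}^{r}$, and shows the corresponding affine variety has zero-dimensional tangent space at every point because the Jacobian is just $A$ (the $q$-th power terms differentiate to zero in characteristic $p$). You instead invoke descent for the Frobenius-semilinear bijection $\sigma$ on $\bar{k}^{r}$, concluding the fixed set is a $k$-form of dimension $r$ and hence has exactly $q^{r}$ elements --- a sharper statement. One caveat: what you are using is not quite classical Galois descent for vector spaces, because the $\frak{w}$-action on $\Hom(M,\bar{k})$ need not extend continuously to $\Gal(\bar{k}/k)=\hat{\Z}$ (for instance if $A$ has infinite order, the orbit of a homomorphism can be infinite). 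The statement you actually need --- that a single Frobenius-semilinear bijection on a finite-dimensional $\bar{k}$-space has fixed points forming a $k$-form --- is true, but it is Lang's theorem (surjectivity of $D\mapsto D^{-1}D^{(q)}$ on $\mathrm{GL}_{r}(\bar{k})$), or equivalently one can argue directly as the paper does that the additive map $v\mapsto v-Cv^{(q)}$ is \'etale of degree $q^{r}$ and surjective on $\G_{a}^{r}$. With that citation repaired, your (ii) is a clean alternative to the paper's tangent-space computation, and it buys the exact cardinality $q^{r}$ rather than mere finiteness.
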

\begin{proof}

(i) Let $f:M\rightarrow\bar{k}^{\times}$ be $ \frak{w}$-equivariant.  Then $f(\sigma m)=f(qm)$, so $f$ factors through the finite group $M/(\sigma-q)M$.  Since $\bar{k}^{\times}$ has only finitely many elements of any particular order, the image of $f$ lands in a finite set which is independent of $f$.  Therefore there are only finitely many possible $f$.

(ii) Choose a $\Z$-basis of $M$ and a matrix $A$ representing the action of $\sigma$ on $M\cong \Z^{r}$.  This $\Z$-basis provides us with an isomorphism $\Hom(M,\bar{k})\simeq \bar{k}^{r}$.  Under this isomorphism, elements of $H^{0}( \frak{w},\Hom(M,\bar{k}))$ correspond to solutions of the equation $Ax=x^{q}$, for $x\in\bar{k}^{r}$ (here raising to the $q^{th}$ power is done component-wise). 

 Writing the coordinates of $x$ as $X_{1},\ldots,X_{r}$, we see that we must show that the affine variety $V$ defined by the equations $\sum_{i=1}^{r}a_{ij}X_{i}-X_{j}^{q}=0$ for $1\leq j\leq r$ is finite, where $A=(a_{ij})$.  Let $V_j\subset \bar{k}^r$ be the hypersurface defined by $\sum_{i=1}^{r}a_{ij}X_{i}-X_{j}^{q}$, so that $V=\bigcap_j V_j$.  Let $v\in V$, and let $T_v$ be the tangent space to $V$ at $v$.  Then $0 = \dim \ker(A) = \dim T_v\geq \dim V,$ hence $V$ has dimension zero and is therefore finite.

(iii) Since $H^0(\frak{w},-)$ commutes with inverse limits, we have
$$H^{0}( \frak{w},\Hom(M,U_{L})) = \varprojlim_{i} H^{0}( \frak{w},\Hom(M,U_{L}/U^{(i)}_{L})).$$
Inducting on the sequences $0\rightarrow U^{(i)}_{L}/U^{(i+1)}_{L}\rightarrow U_{L}/U^{(i+1)}_{L}\rightarrow U_{L}/U^{(i)}_{L}\rightarrow 0$, we see from parts (i) and (ii) that the terms appearing in the above inverse limit are all finite, hence the result.\qedhere
\end{proof}

\begin{lemma}\label{H1vanish}  Let $M$ be a $ \frak{w}$-module which is finitely generated and free as an abelian group.  Then:
\begin{itemize}
\item[(i)]  $H^{1}( \frak{w},\Hom(M,\bar{k}^{\times}))=0$.
\item[(ii)] $H^{1}( \frak{w},\Hom(M,\bar{k}))=0$.
\item[(iii)] $H^{1}( \frak{w},\Hom(M,U_{L}))=0$.  (See Chapter XIII, Proposition 15 of \cite{serre1979} for the case $M=\Z$.)
\end{itemize}
\end{lemma}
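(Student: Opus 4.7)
Since $\frak{w}\cong \Z$ is generated by the Frobenius $\sigma$, the standard resolution gives $H^1(\frak{w},N)=N/(\sigma-1)N$ for any $\frak{w}$-module $N$; the task is therefore to show $\sigma-1$ is surjective on $\Hom(M,N)$ in each case. Fix a $\Z$-basis of $M$ identifying $M$ with $\Z^r$ and the action of $\sigma$ with a matrix $A\in GL_r(\Z)$, and write $C=A^{-1}$. Under the induced identification $\Hom(M,N)\cong N^r$, one computes that $\sigma$ sends $(x_i)_i$ to $\bigl(\sum_i c_{ij}\sigma(x_i)\bigr)_j$ in the additive case and to $\bigl(\prod_i \sigma(x_i)^{c_{ij}}\bigr)_j$ in the multiplicative case. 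The numerical input common to (i) and (ii) is that $q$ is never an eigenvalue of a matrix $A\in GL_r(\Z)$: otherwise the remaining eigenvalues, all algebraic integers, would multiply to $\pm 1/q$, contradicting $|\det A|=1$.

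For part (ii), the equation $(\sigma-1)\psi=\phi$ translates into a system $x_i^q-\sum_j d_{ij}x_j=z_i$ in $\bar k^r$ for some $D=(d_{ij})\in GL_r(\Z)$. I would view this as a morphism $\phi_{\mathrm{alg}}:\mathbb{A}^r_{\bar k}\to\mathbb{A}^r_{\bar k}$ and check that it is finite (each $X_i$ is integral over the subalgebra generated by the remaining $X_k$ and the target coordinates via the monic equation $X_i^q-\sum_j d_{ij}X_j-Y_i=0$) and étale (its Jacobian is $-D$ in characteristic $p$, hence invertible). Since $\mathbb{A}^r_{\bar k}$ is connected and the fiber over $0$ is nonempty (containing $0$), any finite étale cover of it is surjective on $\bar k$-points, yielding the claim.

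For part (i), the equation becomes $\prod_i x_i^{e_{ij}}=z_j$ in $\bar k^\times$, with integer matrix $E=qC-I$. By the numerical observation, $\det E=\pm\det(qI-A)\neq 0$, so the cokernel of $E:\Z^r\to\Z^r$ is a finite abelian group. Tensoring with the divisible group $\bar k^\times$ annihilates this cokernel, so the induced multiplicative map $(\bar k^\times)^r\to(\bar k^\times)^r$---which is exactly $\sigma-1$ on $\Hom(M,\bar k^\times)$---is surjective.

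For part (iii), I would use the filtration $U_L\supset U_L^{(1)}\supset U_L^{(2)}\supset\cdots$, whose $\frak{w}$-equivariant graded pieces are $U_L/U_L^{(1)}\cong \bar k^\times$ and $U_L^{(i)}/U_L^{(i+1)}\cong \bar k$ for $i\geq 1$ (the identifications being $\frak{w}$-equivariant once one chooses a uniformizer coming from $K$). Since $M$ is free, $\Hom(M,-)$ preserves these exact sequences, and combining parts (i) and (ii) with the long exact sequence in $\frak{w}$-cohomology gives by induction that $H^1(\frak{w},\Hom(M,U_L/U_L^{(n)}))=0$ for every $n$. The main obstacle is passing to the inverse limit: applying $\varprojlim_n$ to the short exact sequences
$$0\to \Hom(M,U_L/U_L^{(n)})^{\frak{w}}\to \Hom(M,U_L/U_L^{(n)})\xrightarrow{\sigma-1}\Hom(M,U_L/U_L^{(n)})\to 0$$
yields a six-term sequence in which the cokernel of $\sigma-1$ on $\Hom(M,U_L)$ injects into $\varprojlim^1\Hom(M,U_L/U_L^{(n)})^{\frak{w}}$. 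Each term of this inverse system is finite (by the same filtration together with Lemma (\ref{H0finite}) parts (i)--(ii)), so the system is automatically Mittag-Leffler, $\varprojlim^1$ vanishes, and the result follows.
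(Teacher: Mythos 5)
Your proof is correct, and while part (i) follows essentially the same line as the paper (both reduce to the surjectivity of a map $(\bar k^\times)^r\to(\bar k^\times)^r$ given by an integer matrix with nonzero determinant, then conclude by divisibility), parts (ii) and (iii) take genuinely different routes. For (ii), where the paper simply reduces to the surjectivity of $x\mapsto Ax-x^q$ on $\bar k^r$ and cites Corollary 5.1.2 of \cite{haines}, you give a self-contained geometric proof: the morphism $\mathbb{A}^r\to\mathbb{A}^r$ is \'etale (Jacobian $-D$ is invertible mod $p$ since $q$ is a power of $p$ and $D\in\text{GL}_r(\Z)$) and finite, so its image is open, closed, and nonempty in the connected target, hence everything; the only slightly imprecise step is the justification of finiteness, since ``each $X_i$ integral over the subalgebra generated by the other $X_k$ and the $Y_j$'' does not by itself give finiteness over $\bar k[Y_1,\dots,Y_r]$ — one should instead observe that the leading forms of the $Y_i$ are $X_i^q$, which cut out only the origin, a standard Noether-normalization criterion for finiteness of a polynomial map of affine space. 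For (iii), the paper constructs a preimage directly by successive approximation along the filtration $U_L\supset U_L^{(1)}\supset\cdots$, the sum $\sum_i f_i$ converging because the $f_i$ land in shrinking subgroups; you instead formalize the same limit argument via $\varprojlim^1$: from the vanishing of each $H^1(\frak{w},\Hom(M,U_L/U_L^{(n)}))$ you get short exact sequences whose $\varprojlim$ six-term sequence places the obstruction in $\varprojlim^1$ of the $H^0$-system, which vanishes by Mittag-Leffler since those groups are finite by the argument underlying Lemma (\ref{H0finite}). Your version makes the convergence step fully rigorous at the cost of invoking more machinery; the paper's version is shorter but leaves the ``repeating this process'' step informal. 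Both are valid.
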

\begin{proof} 
(i) Writing the group law on $\bar{k}^{\times}$ additively, we wish to show that the map $(\sigma-1):\Hom(M,\bar{k}^{\times})\rightarrow \Hom(M,\bar{k}^{\times})$ is surjective.  Choosing a $\Z$-basis of $M$ determines an isomorphism $\Hom(M,\bar{k}^{\times})\simeq (\bar{k}^{\times})^{r}$ and a matrix $A\in \text{GL}_{r}(\Z)$ representing the action of $\sigma$ on $M$.  Direct calculation shows that under our isomorphism $\Hom(M,\bar{k}^{\times})\simeq (\bar{k}^{\times})^{r}$, the map $\sigma-1$ is transformed into $qA^{-1}-1:(\bar{k}^{\times})^{r}\rightarrow (\bar{k}^{\times})^{r}$.  Multiplying by the automorphism $-A$, we conclude that it suffices to prove that $A-q$ is surjective, as a map from $(\bar{k}^{\times})^{r}$ to itself.

It is clear that $q$ is not an eigenvalue of $A$, since $\det(A)=\pm 1$, but the characteristic polynomial of $A$ has integer coefficients.  It follows that the map $A-q:\Z^{r}\rightarrow \Z^{r}$ is injective with finite cokernel $C$.  Tensoring with $\bar{k}^{\times}$, we obtain an exact sequence $(\bar{k}^{\times})^{r}\stackrel{A-q}{\rightarrow}(\bar{k}^{\times})^{r}\rightarrow C\otimes\bar{k}^{\times}\rightarrow 0.$  But $C\otimes\bar{k}^{\times}=0$, because $\bar{k}^{\times}$ is divisible and $C$ is finite.

(ii) As in part (i), we choose a $\Z$-basis for $M$ and a matrix $A$ representing the action of $\sigma$ on $M$.  On the group $\Hom(M,\bar{k})$, direct calculation shows that $((\sigma-1)f)(m)=f(A^{-1}m)^{q}-f(m)$, and replacing $m$ with $Am$ and multiplying by $-1$, we are reduced to showing that the map $f\mapsto f\circ A- f^{q}$ is surjective.  Under the isomorphism $\Hom(M,\bar{k})\simeq \bar{k}^{r}$, this is the map $x\mapsto Ax-x^{q}$, where raising to the $q^{th}$ power is done component-wise.  A proof of the surjectivity of this map is contained in Corollary 5.1.2 of \cite{haines}.

%Let $y\in\bar{k}^{r}$; we must show that $Ax-x^{q}-y=0$ has a solution in $x$.  Replacing the components of $x$ with formal variables $X_{1},\ldots,X_{r}$ and projectivizing by adding in appropriate powers of $X_{r+1}$, we obtain the equations
%\begin{eqnarray*}
%L_{1}X_{r+1}^{q-1} -X_{1}^{q}-y_{1}X_{r+1}^{q} &=& 0\\
% &\vdots& \\
%L_{r}X_{r+1}^{q-1}-X_{r}^{q}-y_{r}X_{r+1}^{q} &=& 0
%\end{eqnarray*}
%where $L_{i}$ is a homogeneous linear polynomial in $X_{1},\ldots,X_{r}$.  By Chapter I, Theorem 7.2 of \cite{hartshorne2006}, the intersection of the corresponding hypersurfaces in $\BP^{r}_{\bar{k}}$ is non-empty.  It is clear from the equations that the intersection of the hypersurfaces with the hyperplane $X_{r+1}=0$ is empty.  Thus any point in the intersection of the hypersurfaces is also a solution to the original affine equations, proving that our map is surjective.\\

(iii) Recall that $U_{L}$ comes equipped with a filtration $\cdots\subset U_{L}^{(i)}\subset\cdots\subset U_{L}^{(1)}\subset U_{L}$ of $ \frak{w}$-modules, with $U_{L}/U_{L}^{(1)}\simeq \bar{k}^{\times}$, and higher successive quotients all isomorphic to $\bar{k}$.  Let $f:M\rightarrow U_{L}$, and reduce modulo $U_{L}^{(1)}$.  We obtain a map $\bar{f}:M\rightarrow \bar{k}^{\times}$, and by part (i) we can write $\bar{f}=(\sigma-1)\bar{f}_{0}$ for some $\bar{f}_{0}:M\rightarrow\bar{k}^{\times}$.  As $M$ is free, we can lift $\bar{f}_{0}$ to a map $f_{0}:M\rightarrow U_{L}$, and we have $f=(\sigma-1)f_{0}+g_{1}$ for some $g_{1}:M\rightarrow U_{L}^{(1)}$.  %Reducing $g_{1}$ modulo $U_{L}^{(2)}$, we have a map $\bar{g}_{1}:M\rightarrow \bar{k}$, which by part (ii) we can write as $\bar{g}_{1}=(\sigma-1)\bar{f}_{1}$ for some $\bar{f}_{1}:M\rightarrow \bar{k}^{\times}$.  Lifting $\bar{f}_{1}$ to some $f_{1}:M\rightarrow U_{L}^{(1)}$, we obtain the equality $f=(\sigma-1)f_{0}+(\sigma-1)f_{1}+g_{2}$ where $g_{2}:M\rightarrow U_{L}^{(2)}$.
It is clear that by repeating this process, we can write $f=\sum_{i\geq 0}(\sigma-1)f_{i}=(\sigma-1)(\sum_{i\geq0}f_{i})$, where $f_{i}:M\rightarrow U_{L}^{(i)}$. \qedhere
\end{proof}

%%%%%%%%%%%%%%%%%%%%%%%%%%%%%%%%%%%%%%%%%%%%%%%
%%%%%%%%%%%%%%    Section 3     %%%%%%%%%%%%%%%%%%%%%%
%%%%%%%%%%%%%%%%%%%%%%%%%%%%%%%%%%%%%%%%%%%%%%%
\section{Weil Groups of $p$-adic Fields}
\label{sec:Weil Groups of $p$-adic Fields}

We now turn our attention to $p$-adic fields.  The main theorem of this chapter is a duality theorem for the cohomology of the Weil group of a $p$-adic field, which can be interpreted as a duality theorem for the Weil-\'etale cohomology of the spectrum of the field.

Let $K$ be a local field of characteristic zero, with finite residue field $k$. Let $K_{ur}$ be its maximal unramified extension, and let $L$ be the completion of $K_{ur}$.  Let $\bar{K}$ be an algebraic closure of $K$, and let $\bar{L}$ be an algebraic closure of $L$ containing $\bar{K}$.

Let $\frak{g}=\text{Gal}(K_{ur}/K)=\text{Gal}(\bar{k}/k)\simeq \hat{\mathbf{Z}}$, which has subgroup $\frak{w}\simeq \mathbf{Z}$ consisting of integral powers of the Frobenius.   The action of $\frak{w}$ on $K_{ur}$ extends by continuity to $L$.  We let $I=\text{Gal}(\bar{K}/K_{ur})$ be the inertia subgroup of $G=\text{Gal}(\bar{K}/K)$.  It follows from Krasner's Lemma (see Lemma 8.1.6 of \cite{neukirch2008}) applied to the extension $K_{ur}\subset L$ that $I=\Gal(\bar{L}/L)$.

We define the Weil group $W$ of $K$ to be the pullback of $\frak{w}$ under the surjection $G\rightarrow \frak{g}$.  The diagram
$$
\xymatrix{
0\ar[r] &I\ar[r] &W\ar[r]\ar[d] &\frak{w}\ar[r]\ar[d] &0\\
0\ar[r] &I\ar[r]\ar@{=}[u] &G\ar[r] &\frak{g}\ar[r] &0
}
$$
best summarizes the relationship between all of these groups.  The topology on $W$ is such that $I$ is open, and translation by any preimage of $\sigma$ is a homeomorphism.  

By Chapter XIII, Lemma 1 of \cite{serre1979}, the fixed points of $\frak{w}$ acting on $L$ are exactly $K$; it is immediate that the fixed points of $W$ acting on $\bar{L}$ are also $K$.  Similar remarks apply to $\frak{w}$ and $W$ acting on the $L$ and $\bar{L}$-points, respectively, of some commutative algebraic group scheme defined over $K$.

The exact sequence $0\rightarrow I\rightarrow W\rightarrow \frak{w}\rightarrow 0$ of topological groups gives rise to a spectral sequence $H^{i}(\frak{w},H^{j}(I,M))\Rightarrow H^{i+j}(W,M)$ for any $W$-module $M$.  Because $\text{scd}(\frak{w})=1$, this spectral sequence degenerates to a collection of short exact sequences
\begin{equation}\label{weilses}
  0\rightarrow H^{1}(\frak{w},H^{i-1}(I,M))\rightarrow H^{i}(W,M)\rightarrow H^{0}(\frak{w},H^{i}(I,M))\rightarrow 0
\end{equation}
for all $i\geq 0$.   Recall from Chapter II \S 3.3 of \cite{serre2002} that $\text{cd}(I) = 1$ and $\text{scd}(I) = 2$; it follows that $H^i(W,M)=0$ for all $i\geq 4$ and all $M$; we therefore suppress any mention of $H^i(W,M)$ for $i\geq 4$ from now on.

%%%%%%%%%%%%%%%%%%%%%%%%%%%%%%%%%%%%%%%%%%%%%%%%%%%%%
\subsection{Cohomology of $\G_m$ and Finite $W$-Modules}

\begin{proposition}\label{weilecross}
The cohomology groups $H^i(W,\bar{L}^\times)$ are given by
$$H^{i}(W,\bar{L}^{\times}) = \left\{ \begin{array}{ll}
                                      K^{\times} & \textrm{if $i=0$}\\
                                      \Z & \text{if $i=1$}\\
                                      0 & \text{if $i\geq2$.}
                                      \end{array} \right.$$
\end{proposition}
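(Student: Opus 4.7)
The plan is to compute $H^i(W,\bar L^\times)$ via the five-term-style short exact sequences \eqref{weilses} coming from $0\to I\to W\to\frak{w}\to 0$, so the first order of business is to work out $H^j(I,\bar L^\times)$.

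First I would observe that since $K$, and hence $L$, has characteristic zero, $\bar L$ is separable over $L$ and $I=\Gal(\bar L/L)$. Thus $H^0(I,\bar L^\times)=L^\times$, Hilbert 90 gives $H^1(I,\bar L^\times)=0$, and $H^2(I,\bar L^\times)=\Br(L)$. The key input here is that $L$ is a complete discretely valued field with algebraically closed residue field $\bar k$, so $\Br(L)=0$ (this follows for instance from the standard identification of the Brauer group of a Henselian discretely valued field with the Brauer group of its residue field in the tame case, together with vanishing on the wild part over an algebraically closed residue field; equivalently it follows because $\mathrm{cd}(I)=1$ on torsion modules and the valuation argument below shows the torsion-free part contributes nothing). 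Finally $H^j(I,\bar L^\times)=0$ for $j\ge 3$ since $\mathrm{scd}(I)=2$.

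Next I would feed this into \eqref{weilses}. For $i=0$, the sequence collapses and gives $H^0(W,\bar L^\times)=H^0(\frak{w},L^\times)=K^\times$, using the explicit remark made in the text that $L^{\frak{w}}=K$. For $i=1$, since $H^1(I,\bar L^\times)=0$, one gets $H^1(W,\bar L^\times)\cong H^1(\frak{w},L^\times)$. Here I would use the splitting of $\frak{w}$-modules
$$0\to U_L\to L^\times\to \Z\to 0,$$
which splits by sending $1\in\Z$ to a uniformizer $\pi$ of $K$ (a uniformizer of $K$ is also a uniformizer of $L$ and lies in $L^{\frak{w}}=K$, so the map $n\mapsto\pi^n$ is $\frak{w}$-equivariant). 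Consequently
$$H^1(\frak{w},L^\times)=H^1(\frak{w},U_L)\oplus H^1(\frak{w},\Z).$$
Lemma \ref{H1vanish}(iii) applied with $M=\Z$ kills the first summand, and since $\frak{w}\cong\Z$ acts trivially on $\Z$ the second is $\Hom(\Z,\Z)=\Z$. This yields $H^1(W,\bar L^\times)=\Z$.

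For $i\ge 2$ I would just note that both terms in \eqref{weilses} vanish: $H^{i-1}(I,\bar L^\times)$ and $H^i(I,\bar L^\times)$ are both $0$ because $H^j(I,\bar L^\times)=0$ for $j\ge 1$ by the first paragraph. The only real obstacle is the Brauer-group vanishing $\Br(L)=0$; the other ingredients are either the structure of $L^\times$ as a $\frak{w}$-module or direct applications of lemmas already established in the previous section.
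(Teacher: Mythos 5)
Your overall strategy — feed $H^j(I,\bar L^\times)$ into the short exact sequences \eqref{weilses} — is exactly the paper's, and the conclusion is correct. The genuine difference is how you establish the vanishing of $H^j(I,\bar L^\times)$ for $j\ge 1$. The paper invokes Lang's theorem in one stroke: $L$ is a complete discretely valued field with algebraically closed residue field, hence $C_1$, hence $H^j(I,\bar L^\times)=0$ for all $j\ge 1$ simultaneously. You instead treat $j=1$ (Hilbert 90), $j=2$ (vanishing of $\Br(L)$), and $j\ge 3$ ($\mathrm{scd}(I)=2$) separately. That decomposition is valid, but it is more work, and the $j=2$ step is the weak point: the ``tame/wild'' justification you sketch is really the statement that needs proof, and the usual route to $\Br(L)=0$ in this setting is again Lang's theorem, so you are circling back to what the paper cites directly. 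Your ``equivalently'' alternative also refers to ``the valuation argument below,'' which does not appear in the proposal, and if you meant to decompose $\bar L^\times$ via the valuation sequence and then apply $\mathrm{cd}(I)=1$, you would have to grapple with the fact that $U_{\bar L}$ is not a torsion module; the honest version of that argument goes through Kummer theory (Hilbert 90 gives $\Br(L)[n]\cong H^2(I,\mu_n)=0$, and $\Br(L)$ is torsion), which is fine but again just a roundabout derivation of the $C_1$ consequence. Your other deviation — splitting $0\to U_L\to L^\times\to\Z\to 0$ via a uniformizer of $K$ — is correct and gives the same answer, but the paper's long exact sequence makes the isomorphism $H^1(\frak w,L^\times)\cong\Z$ manifestly canonical without choosing a uniformizer, which matters since this isomorphism is reused later. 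In short: correct result, same scaffolding, but the paper's one-line appeal to $L$ being $C_1$ is both shorter and more transparent than the piecemeal vanishing you propose, and it sidesteps the need for an independent proof that $\Br(L)=0$.
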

\begin{proof} (See \cite{lichtenbaum1999}.)  The field $L$ is a $C_{1}$, hence  $H^{i}(I,\bar{L}^{\times})$ vanishes for $i\geq 1$.  It follows immediately from (\ref{weilses}) that $H^{i}(W,\bar{L}^{\times})=0$ for $i\geq2$.  For $i=1$, (\ref{weilses}) yields an isomorphism $H^{1}(\frak{w},L^{\times})\simeq H^{1}(W,\bar{L}^{\times})$.  The long exact sequence of $\frak{w}$-cohomology of $0\rightarrow U_{L}\rightarrow L^{\times}\rightarrow \Z\rightarrow 0$ now gives us a canonical isomorphism $H^{1}(W,\bar{L}^{\times})= H^{1}(\frak{w},\Z)=\Z$, since $H^1(\frak{w},U_L)=0$ by (\ref{H1vanish}).
\end{proof}

\begin{corollary}\label{weilmun}
  Let $\mu_{n}$ be the group of $n^{th}$ roots of unity in $\bar{L}^{\times}$, and let $\mu=\varinjlim_n \mu_n$.  Then
  $$H^{i}(W,\mu_{n}) = \left\{ \begin{array}{ll}
                                      \mu_{n}(K) & \textrm{if $i=0$}\\
                                      K^{\times}/(K^{\times})^{n} & \text{if $i=1$}\\
                                      \Z/n\Z & \text{if $i=2$}\\
                                      0      & \text{if $i\geq 3$,}
                                      \end{array} \right.$$
 and thus $H^2(W,\mu)=\varinjlim_n H^2(W,\mu_n)=\QZ$.
\end{corollary}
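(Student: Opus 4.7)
The plan is to feed the Kummer sequence
$$0 \to \mu_n \to \bar{L}^\times \xrightarrow{n} \bar{L}^\times \to 0,$$
which is exact because $\bar{L}$ is algebraically closed, into the long exact sequence of $W$-cohomology, and substitute the computation of $H^i(W,\bar{L}^\times)$ from Proposition \ref{weilecross}. Since $H^i(W,\bar{L}^\times)=0$ for $i\geq 2$, the long exact sequence collapses to
$$0 \to H^0(W,\mu_n) \to K^\times \xrightarrow{n} K^\times \to H^1(W,\mu_n) \to \Z \xrightarrow{\alpha} \Z \to H^2(W,\mu_n) \to 0,$$
together with the vanishing of $H^i(W,\mu_n)$ for $i\geq 3$ (the case $i=3$ is squeezed between $H^2$ and $H^3$ of $\bar{L}^\times$, both of which vanish; higher $i$ follows from the general vanishing of $H^i(W,-)$ for $i\geq 4$ noted after (\ref{weilses})).

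Granting that $\alpha$ is multiplication by $n$, the three nontrivial groups are immediately read off as $\mu_n(K)$, $K^\times/(K^\times)^n$, and $\Z/n\Z$, respectively. The only real content is therefore the identification of $\alpha$. For this I would trace through the isomorphism $H^1(W,\bar{L}^\times)\simeq \Z$ constructed in the proof of Proposition \ref{weilecross}, which is the composite
$$H^1(W,\bar{L}^\times) \simeq H^1(\frak{w},L^\times) \simeq H^1(\frak{w},\Z) \simeq \Z,$$
coming from the spectral sequence (\ref{weilses}), the short exact sequence $0 \to U_L \to L^\times \to \Z \to 0$ (using $H^1(\frak{w},U_L)=0$ from Lemma \ref{H1vanish}), and the standard identification $H^1(\frak{w},\Z)=\Z$. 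Each arrow is functorial in the coefficient module, and the $n$-th power map on $\bar{L}^\times$ restricts to $n$-th powering on $L^\times$, which becomes multiplication by $n$ on the valuation quotient $\Z$. Hence $\alpha$ is indeed multiplication by $n$.

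For the final assertion, $W$-cohomology commutes with filtered colimits of discrete $W$-modules, so $H^2(W,\mu)=\varinjlim_n H^2(W,\mu_n)$. Under the isomorphisms just established, the transition map induced by an inclusion $\mu_m\hookrightarrow \mu_n$ with $m\mid n$ is the standard embedding $\Z/m\Z\hookrightarrow \Z/n\Z$ sending $1\mapsto n/m$, since the compatible map of Kummer sequences is the identity on the left copy of $\bar{L}^\times$ and the $(n/m)$-th power on the right; this is precisely the direct system defining $\QZ$. The main obstacle is simply the bookkeeping needed to verify that $\alpha$ is multiplication by $n$, but this reduces to naturality of the constructions already in place.
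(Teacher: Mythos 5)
Your proposal is correct and takes essentially the same approach as the paper: both feed the Kummer sequence $0\to\mu_n\to\bar{L}^\times\to\bar{L}^\times\to 0$ into the long exact sequence of $W$-cohomology and substitute the computation from Proposition \ref{weilecross}. The only difference is that the paper states the resulting sequence with the multiplication-by-$n$ map on $\Z$ appearing directly and says the results "follow immediately," whereas you supply the (correct) verification, via functoriality of the identification $H^1(W,\bar{L}^\times)\simeq\Z$, that the connecting map is indeed multiplication by $n$, and likewise work out the transition maps in the colimit.
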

\begin{proof}
  Consider the long exact sequence in cohomology of the Kummer sequence $0\rightarrow \mu_{n}\rightarrow \bar{L}^{\times}\rightarrow \bar{L}^{\times}\rightarrow 0$.  Using (\ref{weilecross}), we see that the long exact sequence reads
  $$0\rightarrow \mu_{n}(K)\rightarrow K^{\times}\stackrel{n}{\rightarrow}K^{\times}\rightarrow H^{1}(W,\mu_{n})\rightarrow \Z\stackrel{n}{\rightarrow}\Z\rightarrow H^{2}(W,\mu_{n})\rightarrow 0$$
  from which the results follow immediately.
\end{proof}

In the following theorem we collect the Weil group analogues of classical results from Galois cohomology concerning finite modules.

\begin{theorem}\label{weiltld}
Let $M$ be a finite $W$-module.
\begin{itemize}
\item[(i)] The groups $H^i(W,M)$ are finite for all $i$, and vanish for $i\geq 3$.
\item[(ii)] We have $\text{cd}(W)=2$.
\item[(iii)] (Weil-Tate Local Duality) Let $M' = \Hom(M,\mu)$ with its natural $W$-action.  Then the cup-product pairing
$$%\begin{equation}\label{finitecup}
H^i(W,M)\otimes H^{2-i}(W,M')\rightarrow \QZ
$$%\end{equation}
is a perfect pairing of finite groups.
\end{itemize}
\end{theorem}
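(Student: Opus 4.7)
The plan is to handle (i) and (ii) by direct analysis of the short exact sequences (\ref{weilses}), and to reduce (iii) to classical Tate local duality by identifying Weil cohomology with Galois cohomology for finite coefficients. The main obstacle will be a compatibility check between the Weil and Galois invariant maps.

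For (i), since $M$ is torsion and $\text{cd}(I) = 1$, the groups $H^j(I, M)$ vanish for $j \geq 2$, so (\ref{weilses}) immediately forces $H^i(W, M) = 0$ for $i \geq 3$. The remaining groups $H^0(I, M) = M^I$ and $H^1(I, M)$ are finite by the standard finiteness of local Galois cohomology with finite coefficients, and the functors $H^k(\frak{w}, -)$ for $k = 0, 1$ preserve finiteness (they compute the kernel and cokernel of $\sigma - 1$), so $H^i(W, M)$ is finite in every degree. For (ii), part (i) gives $\text{cd}(W) \leq 2$, while Corollary (\ref{weilmun}) provides $H^2(W, \mu_n) = \Z/n\Z \neq 0$ for $n \geq 2$, hence $\text{cd}(W) = 2$.

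For (iii), I would apply the same Hochschild-Serre argument to the exact sequence $0 \to I \to G \to \frak{g} \to 0$. Because $\text{scd}(\frak{g}) = 1$ (from $\frak{g} \cong \hat{\Z}$), the spectral sequence again degenerates into a short exact sequence $0 \to H^1(\frak{g}, H^{i-1}(I, M)) \to H^i(G, M) \to H^0(\frak{g}, H^i(I, M)) \to 0$ which maps into (\ref{weilses}) via the inclusion $\frak{w} \hookrightarrow \frak{g}$. Because each $H^j(I, M)$ is finite, the comparison maps $H^k(\frak{g}, N) \to H^k(\frak{w}, N)$ for finite $N$ are isomorphisms in all degrees (both compute the same $\sigma$-invariants and $\sigma$-coinvariants and both vanish above degree one), so the five lemma yields a natural restriction isomorphism $H^i(G, M) \xrightarrow{\sim} H^i(W, M)$.

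To finish (iii), I would appeal to the functoriality of cup-products to transport the $W$-pairing onto the classical Tate pairing $H^i(G, M) \otimes H^{2-i}(G, M') \to H^2(G, \mu) = \QZ$, provided the identification $H^2(W, \mu) = \QZ$ coming from the limit of Corollary (\ref{weilmun}) is compatible with the classical invariant under $H^2(G, \mu) \xrightarrow{\sim} H^2(W, \mu)$. This compatibility is the principal technical point; it amounts to a diagram chase tracing both identifications back to the Kummer sequence, with the distinct normalizations $H^1(W, \bar{L}^\times) = \Z$ and $H^2(G, \bar{K}^\times) = \QZ$ accounting for the degree shift. Once this is established, the perfect pairing of finite groups follows directly from classical Tate local duality together with the fact that $M' = \Hom(M, \mu)$ agrees with the classical Tate dual $\Hom(M, \mu_n) \subseteq \Hom(M, \bar{K}^\times)$ whenever $M$ is annihilated by $n$.
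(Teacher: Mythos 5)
Your arguments for parts (i) and (ii) follow the same lines as the paper: use the degenerate spectral sequence (\ref{weilses}) together with $\text{cd}(I)=1$ to get vanishing and finiteness, then use $H^2(W,\mu_n)\neq 0$ for the lower bound. One small slip: $\text{scd}(\frak{g})=2$, not $1$ (indeed $H^2(\hat\Z,\Z)=\QZ\neq 0$); what you actually need in your spectral-sequence comparison is only $\text{cd}(\frak{g})=1$, which applies here because the coefficient modules $H^j(I,M)$ are finite when $M$ is.

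For part (iii), however, you take a genuinely different route from the paper. You first establish the comparison isomorphism $H^i(G,M)\xrightarrow{\sim} H^i(W,M)$ for finite $M$ (which, as you note, follows from the isomorphism of Hochschild--Serre spectral sequences since $\frak{g}$- and $\frak{w}$-cohomology agree on torsion coefficients), check that it is compatible with the invariant maps $H^2(G,\mu)\simeq\QZ$ and $H^2(W,\mu)\simeq\QZ$, and then transport classical Tate local duality across. The paper instead re-proves the duality \emph{directly} in the Weil setting, following Serre's argument with Shapiro's Lemma applied to open finite-index subgroups of $W$. Both are correct, but the two proofs have opposite logical direction: the paper later (Section 4) proves your comparison isomorphism as Proposition (\ref{weilgaltorsion}) and uses it with (\ref{weiltld}) to \emph{deduce} classical Tate local duality as a corollary, whereas you use classical Tate duality as input. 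This matters because one of the paper's stated aims is to give a treatment independent of the main results of local Galois cohomology; your argument trades that independence for brevity. Your proof also leaves the invariant-map compatibility as a diagram chase you flag but do not carry out; it is the real content of your reduction and is nontrivial precisely because the two invariants are normalized from different degrees ($H^1(W,\bar L^\times)=\Z$ versus $H^2(G,\bar K^\times)=\QZ$), so it deserves to be done carefully rather than waved at.
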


\begin{proof}
The proof of (i) is the same as for the Galois group, see Chapter II, \S5.2 of \cite{serre2002}.  To prove (ii), recall that $\text{scd}(\frak{w})=1$ and $\text{cd}(N)=1$.  From the short exact sequences of \ref{weilses}, we see that $\text{cd}(W)\leq 2$.  But  $H^2(W,\mu_n)\neq 0$, hence the equality $\text{cd}(W)=2$.  The proof of (iii) is the same as for Tate Local Duality, as in Chapter II, \S 5.2 of \cite{serre2002}, provided one restricts to open subgroups of finite index of $W$ when using Shapiro's Lemma.
\end{proof}

\subsection{Vanishing of $H^3(W,M)$ for Finitely Generated $M$}

The vanishing theorem of this section is needed to prove the duality theorem of the next section, but it is interesting in its own right.  As a consequence of this vanishing theorem, we deduce a theorem of Rajan (and offer a slight correction to his proof).

\begin{theorem}\label{h3vanish}  Let $M$ be a $W$-module which is finitely generated as an abelian group.  Then $H^{3}(W,M)=0$.
\end{theorem}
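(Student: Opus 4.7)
The plan is to reduce to the case where $M$ is free as an abelian group, and then convert the claim via Weil-Tate Local Duality into the vanishing of a Hom group with values in the full Tate module of roots of unity.  The first reduction is immediate: from $0\rightarrow M_{\text{tors}}\rightarrow M\rightarrow M/M_{\text{tors}}\rightarrow 0$ and $\text{cd}(W)=2$ (Theorem \ref{weiltld}(ii)), the finite group $M_{\text{tors}}$ contributes to $H^i(W,-)$ only in degrees $\leq 2$, so $H^3(W,M)\simeq H^3(W,M/M_{\text{tors}})$, and we may assume $M$ is free of some rank $r$.

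Next I tensor with $\Q$: the sequence $0\rightarrow M\rightarrow M\otimes\Q\rightarrow M\otimes\QZ\rightarrow 0$, combined with the vanishing of $H^i(W,M\otimes\Q)$ for $i\geq 2$---which follows from (\ref{weilses}), the standard vanishing of $H^j$ of a profinite group on a uniquely divisible discrete module for $j\geq 1$, and $\text{scd}(\frak{w})=1$---yields $H^3(W,M)\simeq H^2(W,M\otimes\QZ)\simeq \varinjlim_n H^2(W,M/nM)$.  By Weil-Tate Local Duality (Theorem \ref{weiltld}(iii)), $H^2(W,M/nM)$ is Pontryagin dual to $\Hom_W(M,\mu_n)$; passing to limits on both sides converts the problem to showing $\Hom_W(M,T\mu)=0$, where $T\mu=\varprojlim_n\mu_n\simeq \prod_\ell \Z_\ell(1)$.

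I would establish this vanishing prime by prime.  For $\ell\neq p$: Frobenius $\sigma$ acts on $\Z_\ell(1)$ by multiplication by $q=|k|$, so after fixing a $\Z$-basis of $M$ a $W$-equivariant $\phi:M\rightarrow \Z_\ell(1)$ corresponds to $y\in \Z_\ell^r$ satisfying $A_\sigma^T y=qy$, where $A_\sigma\in GL_r(\Z)$ represents $\sigma$; since $\det A_\sigma=\pm 1$, the rational root theorem rules out $q$ as an eigenvalue of $A_\sigma$, forcing $y=0$.  For $\ell=p$: the subgroup $I_0\subseteq I$ acting trivially on $M$ is open of finite index in $I$ (because $GL_r(\Z)$ is discrete while $I$ is compact), and the $p$-cyclotomic character $\chi_p:I\rightarrow \Z_p^\times$ has infinite image since $\mu_{p^\infty}\not\subset K_{ur}$, so the finite-index subgroup $\chi_p(I_0)$ is also nontrivial; picking $w\in I_0$ with $\chi_p(w)\neq 1$ and using $W$-equivariance together with $wm=m$ gives $(\chi_p(w)-1)\phi(m)=0$ in $\Z_p$, which forces $\phi=0$ because $\Z_p$ is a domain.

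The hardest step will be this $\ell=p$ case, in which the Frobenius-eigenvalue obstruction used for $\ell\neq p$ is unavailable---there is no integrality relation between $\chi_p(\sigma)\in \Z_p^\times$ and the eigenvalues of $A_\sigma$.  The remedy is to exploit the ramification of the $p$-cyclotomic extension of $K_{ur}$ to locate a suitable inertia element, which is the new geometric input specific to the local $p$-adic setting.
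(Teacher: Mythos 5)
Your proof is correct and follows essentially the same route as the paper's: reduce to $M$ free, use $0\rightarrow M\rightarrow M\otimes\Q\rightarrow M\otimes\QZ\rightarrow 0$ together with Weil--Tate local duality to convert the claim into the vanishing of a $\Hom$ group, and then kill the $p$-primary part via ramification of the $p$-cyclotomic character and the prime-to-$p$ part via the Frobenius eigenvalue obstruction coming from $\det(a)=\pm 1$. The only cosmetic difference is that the paper phrases the final vanishing as $\Hom_W(M\otimes\QZ,\mu)=0$ while you use the adjoint formulation $\Hom_W(M,T\mu)=0$; by the identity $\Hom(M\otimes\QZ,\mu)\cong\Hom(M,\Hom(\QZ,\mu))=\Hom(M,T\mu)$ these are literally the same group.
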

\begin{proof}  The results of the previous section prove that $H^3(W,M)=0$ for finite $M$, so we may assume $M$ is free.  Let $\Q$ be the rationals with trivial $W$-action; then $M_{\Q}=M\otimes \Q$ and $P=M_{\Q}/M$ fit into the exact sequence $0\rightarrow M\rightarrow M_\Q\rightarrow P\rightarrow 0$ of $W$-modules.  The $\Q$-vector space $M_\Q$ is uniquely divisible, therefore $H^{1}(I,M_{\Q})=H^{2}(I,M_{\Q})=0$, which implies by (\ref{weilses}) that $H^{2}(W,M_{\Q})=H^{3}(W,M_{\Q})=0$.  Therefore $H^2(W,P)\simeq H^3(W,M)$, so it suffices to show this former group vanishes.

Let $P_{n}$ denote the $n$-torsion of $P$.  By Weil-Tate Local Duality (\ref{weiltld}) we have
$$
H^{2}(W,P) \simeq \varinjlim_{n} H^{2}(W,P_{n}) \simeq \varinjlim_{n} \Hom_{W}(P_{n},\mu)^* \simeq (\varprojlim_{n} \Hom_{W}(P_{n},\mu))^* \simeq \Hom_{W}(P,\mu)^*,
$$
where $\varprojlim_{n}\Hom_{W}(P_{n},\mu)$ has its profinite topology.  From these isomorphisms, we see that it suffices to show $\Hom_{W}(P,\mu)=0$.
\par Let $f:P\rightarrow \mu$ be $W$-equivariant, and let $I'$ be an open, normal subgroup of $I$ such that $I'$ acts trivially on $M$.  Then for all $\tau\in I'$ and $e\in P$, we have $f(e)=f(\tau\cdot e)=f(e)^{\tau}$.  It follows that $f(P)\cap \mu_{p^{\infty}}$ is finite, where $p=\text{char}(k)$.  Since $P$ has no finite quotients and we wish to prove that $f(P)=1$, we may assume that $f(P)\subseteq\mu'$, the group of roots of unity of order prime to $p$.  On $\mu'$, $W$ acts via the natural action of $\sigma$, which is $\zeta\mapsto \zeta^{q}$, where $q=\#k$.
\par Let $a\in W$ be a preimage of $\sigma$.  Applying the $W$-equivariance of $f$ to $a$, we see that $f(a\cdot e)=f(e)^{q}=f(q\cdot e)$ for all $e\in P$.  Therefore $(a - q)(P)\subseteq \ker(f)$.  We claim that $(a - q):P\rightarrow P$ is surjective, which implies $f=1$. By the Snake Lemma, it suffices to prove that $a - q$ is surjective on $M_{\Q}$.   Surjectivity of $a - q$ on $M_{\Q}$ is equivalent to injectivity, and injectivity will hold if and only if $q$ is not an eigenvalue of $a$.  But $q$ cannot be an eigenvalue of $a$, because $\det(a)=\pm 1$ and the characteristic polynomial of $a$ has integer coefficients.
\end{proof}

\begin{remark}
The above theorem does not immediately imply that $H^3(W,M)=0$ for all $W$-modules $M$, since one cannot write an arbitrary $W$-module as a direct limit of modules which are finitely generated as abelian groups.  For example, let $\frak{w}$ act on $M=\bigoplus_{i\in\Z}\Z$ by shifting the indices in the obvious way, and let $W$ act on $M$ through the quotient map $W\rightarrow \frak{w}$.
\end{remark}

Suppose now that $A$ is a \textit{topological} $W$-module; that is $A$ is a topological abelian group with a continuous action of $W$.  For such $A$, we can define cohomology groups $H^i_{cc}(W,A)$ using complexes of continuous cochains.  It follows from Corollary 2.4 of \cite{lichtenbaum2009} and Corollary 2 of \cite{flach2008} that the groups $H^i_{cc}(W,A)$ agree with the topological group cohomology used in \cite{lichtenbaum2009} and \cite{flach2008}, which is defined using the classifying topos $BW$.

The non-discrete topological $W$-modules in which we are interested are all complex manifolds, and hence Remark 2.2 of \cite{lichtenbaum2009} shows that the groups $H^i_{cc}(W,A)$ also agree with the cohomology groups $H^i_M(W,A)$ defined by Moore in \cite{moore3} and used by Rajan in \cite{rajan2004}.  If $A$ is discrete, then $H^i_{cc}(W,A)$ can be identified with the Galois cohomology groups $H^i(W,A)$; this follows from the remarks preceding Lemma 1 of \cite{rajan2004}.

\begin{theorem}\label{rajan}
Let $T$ be an algebraic torus over $\C$, equipped with an action of $W$ via algebraic automorphisms.  Then $H^{2}_{cc}(W,T(\C))=0$.  In particular, for $T=\G_m$, we conclude that $H^2(W,\C^\times)=0$.
\end{theorem}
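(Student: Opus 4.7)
My plan is to reduce the statement to vanishing of continuous $W$-cohomology with coefficients in a complex vector space, via the exponential short exact sequence. Concretely, let $X_{*}(T)=\Hom(\G_{m},T)$ be the cocharacter lattice and set $V:=X_{*}(T)\otimes_{\Z}\C$. The universal cover $\C\to\C^{\times}$ induces an exact sequence of topological $W$-modules
$$0\to X_{*}(T)\to V\to T(\C)\to 0,$$
where the $W$-action on both $X_{*}(T)$ and $V$ is the one induced by the algebraic $W$-action on $T$. Since $\Aut(T)$ is the discrete group $GL(X_{*}(T))$, the $W$-action factors through a discrete quotient; in particular $X_{*}(T)$ is a $W$-module that is finitely generated free as an abelian group, and $V$ is a finite-dimensional $\C$-vector space on which $W$ acts linearly through the same quotient.

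Taking continuous cochain cohomology of this exact sequence produces a long exact sequence, whose relevant portion is
$$H^{2}_{cc}(W,V)\to H^{2}_{cc}(W,T(\C))\to H^{3}_{cc}(W,X_{*}(T)).$$
Since $X_{*}(T)$ is discrete, $H^{*}_{cc}$ here agrees with ordinary Galois cohomology (as noted in the discussion preceding the theorem), and Theorem (\ref{h3vanish}) then gives $H^{3}_{cc}(W,X_{*}(T))=0$. It therefore suffices to prove $H^{2}_{cc}(W,V)=0$.

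For the vector-space computation, I would use the continuous-cohomology analogue of the short exact sequence (\ref{weilses}),
$$0\to H^{1}(\frak{w},H^{1}_{cc}(I,V))\to H^{2}_{cc}(W,V)\to H^{0}(\frak{w},H^{2}_{cc}(I,V))\to 0.$$
Because the $I$-action on $V$ factors through a finite quotient $I/I'$, continuous cohomology reduces to ordinary finite group cohomology, $H^{j}_{cc}(I,V)=H^{j}(I/I',V^{I'})$, which vanishes for $j\geq 1$ since $V^{I'}$ is a $\Q$-vector space. Both outer terms of the sequence therefore vanish, and we conclude $H^{2}_{cc}(W,V)=0$. Specializing to $T=\G_{m}$ we have $V=\C$ with trivial $W$-action, recovering $H^{2}(W,\C^{\times})=0$.

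The main obstacle is the technical step of justifying these manipulations inside Moore/continuous-cochain cohomology: one needs to know that the exponential short exact sequence of Polish $W$-modules induces a long exact sequence, and that the Hochschild--Serre spectral sequence for the closed normal subgroup $I\subset W$ is available in this framework. Both are standard in Moore's setup (cf.\ \cite{moore3}) given that $\C$, $V$, $T(\C)$, and $X_{*}(T)$ are all Polish and the quotient map $V\to T(\C)$ is a covering map with local continuous sections, so these reduce to verifying that the identifications between $H^{*}_{cc}$ and the various specialized cohomology theories cited before the theorem behave functorially with respect to the sequence above.
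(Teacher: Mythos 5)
Your argument is correct and matches the paper's proof step for step: the exponential sequence $0\to X_*(T)\to X_*(T)\otimes\C\to T(\C)\to 0$, the vanishing $H^3(W,X_*(T))=0$ from Theorem (\ref{h3vanish}), the vanishing $H^i_{cc}(W,V)=0$ for $i\geq 2$ via the inertia spectral sequence, and the long exact sequence. The only cosmetic difference is in justifying $H^j_{cc}(I,V)=0$ for $j\geq 1$: you observe that the $I$-action factors through a finite quotient and reduce to finite group cohomology with $\Q$-vector-space coefficients (which still tacitly uses that $H^q_{cc}(I',V)=0$ for a profinite $I'$ acting trivially on a complex vector space, a Haar-measure averaging fact), whereas the paper simply cites Corollary 8 of \cite{flach2008}, which supplies exactly this vanishing.
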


\begin{proof} From Corollary 8 of \cite{flach2008}, it follows that $H^{i}_{cc}(I,V)=0$ for any finite-dimensional complex vector space $V$ and any $i\geq 1$.  Thus the spectral sequence of Moore (quoted by Rajan as Proposition 5 of \cite{rajan2004}) gives isomorphisms $H^i(\frak{w},H^0_{cc}(I,V))= H^i_{cc}(W,V),$ and thus this latter group vanishes for $i\geq 2$.

Let $X_*(T)$ denote the cocharacter group of $T$; it is a finitely generated discrete $W$-module.  There is a short exact sequence of topological $W$-modules,
$$0\rightarrow X_*(T)\rightarrow X_*(T)\otimes\C\rightarrow T(\C)\rightarrow 0.$$
The groups $H^i_{cc}(W,X_*(T)\otimes\C)$ vanish for $i\geq 2$ by the previous paragraph, and the groups $H^i(W,X_*(T))$ vanish for $i\geq 3$ by (\ref{h3vanish}).  The theorem now follows by considering the long exact  sequence in cohomology.
\end{proof}

In \cite{rajan2004}, Rajan proves that $H^{2}_M(W,T(\C))=0$, for those tori with an action of $W$ that comes from an action of $G$.  His proof seems to be slightly flawed, because his Proposition 6 asserts that $H^2(G,A)\rightarrow H^2(W,A)$ is an isomorphism for any $G$-module $A$, when in fact this map has a kernel for $A=\Z$.  However, Rajan's proof of the vanishing of $H^{2}_M(W,T(\C))$ ultimately relies only on the surjectivity of $H^{2}(G,A)\rightarrow H^{2}(W,A)$, which holds by by (\ref{weilgalcomp}).

Theorem (\ref{rajan}) implies that the map $H^1(W,\text{GL}_n(\C))\rightarrow H^1(W,\text{PGL}_n(\C))$ is surjective, which says exactly that every projective complex representation of $W$ lifts to an affine representation.

%%%%%%%%%%%%%%%%%%%%%%%%%%%%%%%%%%%%%%%%%%%%%%%%%%%%
\subsection{Duality for Finitely Generated Modules}

Let $\Gamma_W:W\text{-Mod}\rightarrow \mathcal{A}b$ be the fixed-points functor, which has derived functor $R\Gamma_W:\mathcal{D}(W)\rightarrow \mathcal{D}(\Z)$, so that $H^i(R\Gamma_W(M))=H^i(W,M)$ for any $W$-module $M$.  By (\ref{weilecross}), there is a natural projection map $R\Gamma_W(\bar{L}^\times)\rightarrow \Z[-1]$.  If $M$ is any bounded complex of $W$-modules, we set $M^D:= R\Hom(M,\bar{L}^\times)$.  The cup-product pairing
$$%\begin{equation}\label{cup-product}
R\Gamma_W(M)\otimes^L R\Gamma_W(M^D)\rightarrow \Z[-1]
$$%\end{equation}
induces a map
\begin{equation}\label{inducedbycup}
\psi(M):R\Gamma_W(M^D)\rightarrow R\Hom(R\Gamma_W(M),\Z[-1]).
\end{equation}

\begin{theorem}\label{fgweilduality}
Suppose that $M$ is a bounded complex of $W$-modules, whose cohomology groups are finitely generated as abelian groups.  Then the map $\psi(M)$ of (\ref{inducedbycup}) is an isomorphism in $\mathcal{D}(\Z)$.  
\end{theorem}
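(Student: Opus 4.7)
The plan is a two-stage dévissage followed by treatment of the resulting base cases, in the spirit of the proof of Theorem \ref{fggamma0duality}. First, by induction on the length of the bounded complex $M$, using the truncation triangle $\tau_{\leq k} M \to M \to \tau_{> k} M$ in $\mathcal{D}(W)$ and the five lemma applied to the long exact sequences in cohomology of the two triangles in $\mathcal{D}(\Z)$ obtained by applying $R\Gamma_W((-)^D)$ and $R\Hom(R\Gamma_W(-),\Z[-1])$, I would reduce to $M$ concentrated in a single degree, and then by shifting to $M$ a single $W$-module, finitely generated as an abelian group. The short exact sequence $0 \to M_{\mathrm{tors}} \to M \to M/M_{\mathrm{tors}} \to 0$ produces a distinguished triangle in $\mathcal{D}(W)$, and the same five-lemma argument reduces further to the two cases (i) $M$ finite, and (ii) $M$ finitely generated and free as an abelian group.

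The finite case is essentially a repackaging of Weil-Tate Local Duality. Because $\bar{L}^\times$ is divisible and hence injective as an abelian group, $M^D = \Hom(M,\bar{L}^\times)$ lies in degree 0 and coincides with the $W$-module $M' = \Hom(M,\mu)$. Since all the groups $H^j(W,M)$ are finite by Theorem \ref{weiltld}(i), the Hom-terms in Proposition \ref{rhomz} vanish and the sequence collapses to give $\Ext^i(R\Gamma_W(M),\Z[-1]) \cong H^{2-i}(W,M)^*$. Unwinding the definition of $\psi(M)$ identifies $\psi(M)^i$ with the cup-product pairing $H^i(W,M') \to H^{2-i}(W,M)^*$ of Theorem \ref{weiltld}(iii), which is an isomorphism.

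The free case is the main technical obstacle. The plan is to use the short exact sequence $0 \to M \to M\otimes\Q \to P \to 0$ (the same sequence exploited in Theorem \ref{h3vanish}) and to prove that $\psi$ is an isomorphism for both $P$ and $M\otimes\Q$; the case of $M$ then follows from the five lemma applied to the induced triangles on each side of $\psi$. For the torsion quotient $P = \varinjlim_n M/nM$, each finite module $M/nM$ is handled by Case (i), but transferring this to $P$ is delicate because $R\Hom((-),\bar{L}^\times)$ converts the filtered colimit into a derived inverse limit; a Mittag-Leffler condition on the tower $\{H^i(W,(M/n)^D)\}$ must be established using the finiteness results of Section 2. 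For $M\otimes\Q$, the $I$-cohomology of the uniquely divisible module $M\otimes\Q$ vanishes (as in the proof of Theorem \ref{h3vanish}), so the spectral sequence of $0 \to I \to W \to \frak{w} \to 0$ together with $\mathrm{scd}(\frak{w})=1$ forces $H^i(W,M\otimes\Q) = 0$ for $i \geq 2$; both sides of $\psi(M\otimes\Q)$ can then be computed explicitly in low degree using Proposition \ref{rhomz} and Hilbert 90 for $L$.

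The hard part will be controlling these limits and the cohomology of the non-finitely-generated intermediate modules $M\otimes\Q$ and $P$ with coefficient sheaves that are themselves non-discrete; in particular, identifying $R\Gamma_W(P^D)$ with the correct derived inverse limit and verifying its compatibility with the Weil-Tate pairing at each finite level is the most delicate ingredient of the argument, and is what makes the free case substantially harder than the finite case.
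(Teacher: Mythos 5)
Your reduction to a single $W$-module and then to the two base cases (finite, and finitely generated free) matches the paper exactly, and your treatment of the finite case is the paper's. The free case is where you diverge, and your chosen dévissage has a genuine gap. You filter the \emph{source} via $0 \to M \to M_\Q \to P \to 0$ and propose to run $\psi$ through the resulting triangle, which requires making sense of $R\Gamma_W(M_\Q^D)$ and $R\Gamma_W(P^D)$. But these dual objects leave the category of discrete $W$-modules, on which $R\Gamma_W$ is defined. Concretely, for $M=\Z$ one has $P=\Q/\Z$ and $P^D=\Hom(\Q/\Z,\mu)\cong \hat{\Z}(1)$, a Tate-twisted $\hat{\Z}$ on which $W$ acts through the cyclotomic character; the stabilizer of $1\in\hat{\Z}(1)$ is $W\cap\ker(\chi_{\mathrm{cyc}})$, which is \emph{not} open in $W$ because $L(\mu_{p^\infty})/L$ is an infinite extension. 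Thus $P^D$ is genuinely profinite, not discrete, and similarly $M_\Q^D=\Hom(\Q,\bar{L}^\times)$ is not discrete. Framing the obstruction as a Mittag--Leffler condition on a tower understates it: before any $\varprojlim$ or $\varprojlim^1$ issue arises, one needs a cohomology theory that accepts these coefficient modules and a proof that it is compatible with the cup-product pairing at each finite level — neither of which is available in the framework you are given.

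The paper sidesteps this entirely by filtering the \emph{target} instead: it uses $0 \to U_{\bar{L}} \to \bar{L}^\times \to \Q \to 0$ and applies $\Hom(M,-)$. Because $M$ is finitely generated and free, all three resulting modules $\Hom(M,U_{\bar{L}})$, $M^D=\Hom(M,\bar{L}^\times)$, $\Hom(M,\Q)$ remain discrete $W$-modules (finite intersections of open stabilizers are open), so the five-lemma diagram (\ref{fivelemma}) stays inside $\mathcal{D}(W)$. The $\Q$-coefficient row is then disposed of by Proposition (\ref{weilqduality}) using $\frak{w}$-duality over a field, and the real work goes into the $U_{\bar{L}}$-row via Propositions (\ref{intermediateduality}) and (\ref{intermediateduality2}), which exploit the filtration $U_L^{(i)}$, cohomological triviality of $\Hom(M,L'^\times)$, and Hilbert 90-type vanishing from Lemma (\ref{H1vanish}). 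If you want to rescue your version, you would have to replace $R\Gamma_W$ on $P^D$ and $M_\Q^D$ by the continuous-cochain cohomology $H^*_{cc}(W,-)$ and develop a limit comparison plus pairing compatibility for profinite coefficients; that is a substantial new layer of machinery that the paper's target-filtration approach is specifically designed to avoid.
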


By induction on the length of the complex, it is clear that it suffices to prove the result for $M$ concentrated in degree $0$.  Furthermore, for finite $M$, (\ref{fgweilduality}) is easily seen to be equivalent to (\ref{weiltld}).  So it suffices to prove (\ref{fgweilduality}) for finitely generated, free $W$-modules $M$.   For the rest of the section, we let $M$ be such a module.

The proof of this theorem relies on several lemmas.  To begin, the discrete valuation $L^\times\rightarrow \Z$ induces a valuation $\bar{L}^\times\rightarrow \Q$.  Let $U_{\bar{L}}$ be the kernel of this map, so that we have a short exact sequence
$$%\begin{equation}\label{valses}
0\rightarrow U_{\bar{L}}\rightarrow \bar{L}^\times\rightarrow \Q\rightarrow 0
$$%\end{equation}
of $W$-modules.  Taking $W$-cohomology, it follows easily that $H^2(W,U_{\bar{L}})=\QZ$, and $H^i(W,U_{\bar{L}})=0$ for $i\geq 3$.  Cup-product therefore induces the map
\begin{equation}\label{fivelemma}
\xymatrix{
R\Gamma_W(\Hom(M,U_{\bar{L}}))\ar[r]\ar[d] &R\Hom(R\Gamma_W(M),\QZ[-2])\ar[d] \\
R\Gamma_W(M^D)\ar[d]\ar[r]^-{\psi(M)} &R\Hom(R\Gamma_W(M),\Z[-1])\ar[d] \\
R\Gamma_W(\Hom(M,\Q))\ar[r] &R\Hom(R\Gamma_W(M),\Q[-1])
}
\end{equation}
 of exact triangles in $\mathcal{D}(\Z)$.

\begin{lemma}
To prove Theorem (\ref{fgweilduality}), it suffices to show that the maps
$$%\begin{equation}\label{inducedbyulcup}
H^i(W,\Hom(M,U_{\bar{L}}))\rightarrow H^{2-i}(W,M)^*
$$%\end{equation}
and
$$%\begin{equation}\label{inducedbyqcup}
H^j(W,\Hom(M,\Q))\rightarrow \Hom(H^{1-j}(W,M),\Q)
$$%\end{equation}
induced by cup-product are isomorphisms for all $i\geq 0$ and all $j\geq 0$.
\end{lemma}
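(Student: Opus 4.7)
The plan is to exploit the diagram~(\ref{fivelemma}) together with the standard two-out-of-three principle for morphisms of exact triangles: in a commutative map between two exact triangles, if any two of the three connecting maps are isomorphisms, then so is the third. Granting this, once the top and bottom horizontal maps of~(\ref{fivelemma}) are shown to be isomorphisms in $\mathcal{D}(\Z)$, the middle map $\psi(M)$ is an isomorphism as well.

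First I would verify that (\ref{fivelemma}) actually is a morphism of exact triangles. The left column comes from applying $R\Gamma_W\circ\Hom(M,-)$ to the short exact sequence $0\to U_{\bar L}\to \bar L^{\times}\to \Q\to 0$ of $W$-modules; because $M$ is free as an abelian group, $\Hom(M,-)$ is exact, so this yields an exact triangle in $\mathcal{D}(W)$ which remains exact after applying $R\Gamma_W$. The right column is obtained by applying the contravariant functor $R\Hom(R\Gamma_W(M),-)$ to the shifted triangle $\QZ[-2]\to \Z[-1]\to \Q[-1]$ coming from $0\to \Z\to \Q\to \QZ\to 0$. Commutativity of the two squares is automatic from the naturality of cup-product against the maps $\bar L^{\times}\to \Q$ and $\Z\to \Q$ used to construct $\psi(M)$.

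Next I would translate the "isomorphism in $\mathcal{D}(\Z)$" conditions for the top and bottom horizontal maps into statements about cohomology groups. A map of bounded complexes of abelian groups is an isomorphism in $\mathcal{D}(\Z)$ exactly when it is a quasi-isomorphism. For the targets, since $\Q$ and $\QZ$ are injective as abelian groups, Proposition (\ref{rhomz}) (or rather its trivial analogue with no $\Ext^{1}$ contribution) gives, for any bounded complex $C$,
$$
\Ext^{i}(C,\QZ[-2])=\Hom(H^{2-i}(C),\QZ),\qquad \Ext^{j}(C,\Q[-1])=\Hom(H^{1-j}(C),\Q).
$$
Taking $C=R\Gamma_W(M)$, the cohomology of $R\Hom(R\Gamma_W(M),\QZ[-2])$ is identified with $H^{2-i}(W,M)^{*}$ and that of $R\Hom(R\Gamma_W(M),\Q[-1])$ with $\Hom(H^{1-j}(W,M),\Q)$. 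Under these identifications, the top and bottom horizontal maps of (\ref{fivelemma}) induce, on cohomology, precisely the two cup-product maps displayed in the statement of the lemma.

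I do not anticipate any real obstacle: the whole argument is the formal diagram chase outlined above, and the substantive content of Theorem~(\ref{fgweilduality}) is postponed to the verification of the two cup-product isomorphisms themselves. Of those two, the one for $\Hom(M,U_{\bar L})$ looks the harder, as it will presumably require filtering $U_{\bar L}$ by higher unit subgroups and leveraging the finiteness and vanishing results of Lemmas~(\ref{H0finite}) and~(\ref{H1vanish}) together with the vanishing theorem~(\ref{h3vanish}); the one for $\Hom(M,\Q)$ should follow from the uniquely divisible case and a direct computation of $H^{*}(W,\Hom(M,\Q))$ via the spectral sequence~(\ref{weilses}).
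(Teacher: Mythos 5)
Your argument is correct and matches the paper's own (much terser) proof: both reduce Theorem~(\ref{fgweilduality}) to showing the top and bottom horizontal maps of~(\ref{fivelemma}) are isomorphisms, using the two-out-of-three property for morphisms of exact triangles (which the paper calls the Five Lemma), and both use the injectivity of $\Q$ and $\QZ$ to identify the cohomology of the right-hand column with $H^{2-i}(W,M)^{*}$ and $\Hom(H^{1-j}(W,M),\Q)$. You have simply spelled out in detail the two clauses the paper leaves implicit, including the observation that $\Hom(M,-)$ is exact because $M$ is free.
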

\begin{proof}
By the Five Lemma, it to prove Theorem (\ref{fgweilduality}), it suffices to prove that the top and bottom maps of (\ref{fivelemma}) are isomorphisms.  Because $\Q$ and $\QZ$ are injective abelian groups, this is easily seen to be equivalent to the statement of the lemma.
\end{proof}

%By considering the short exact sequence $0\rightarrow M_{\text{tors}}\rightarrow M\rightarrow M/M_{\text{tors}}\rightarrow 0$ of $W$-modules, we are now reduced to proving the maps of (\ref{inducedbyulcup}) and (\ref{inducedbyqcup}) are isomorphisms for finitely generated, free $M$.  We will begin by showing that (\ref{inducedbyqcup}) is an isomorphism.

\begin{proposition}\label{weilqduality}
The map
$$H^j(W,\Hom(M,\Q))\rightarrow \Hom(H^{1-j}(W,M),\Q)$$
induced by cup-product is an isomorphism for all $j\geq 0$.
\end{proposition}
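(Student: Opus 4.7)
The plan is to reduce the assertion to the cup-product duality for the Weil group $\frak{w}$ of the residue field with $\Q$-coefficients, namely Example (\ref{gamma0vsduality}), by integrating out the contribution of the inertia group $I$. Note that for $j \geq 2$ both sides vanish: the right side is $0$ because $H^{1-j}(W,M) = 0$, and the left side is $0$ because, after the reduction below, $H^j(\frak{w}, -) = 0$ for $j \geq 2$. Hence it suffices to treat $j = 0, 1$.

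Since $M$ is a finitely generated discrete $W$-module, the kernel of the $W$-action on $M$ is open in $W$, so the profinite group $I$ acts on $M$ (and on $\Hom(M,\Q)$) through a finite quotient $\bar I$. By Maschke's theorem applied to $\Hom(M,\Q)$ as a $\Q[\bar I]$-module, $H^j(I, \Hom(M,\Q)) = 0$ for $j \geq 1$. The Hochschild-Serre spectral sequence for $0 \to I \to W \to \frak{w} \to 0$ therefore collapses to
$$
H^i(W, \Hom(M,\Q)) \;\cong\; H^i(\frak{w}, \Hom(M_I, \Q)).
$$
On the other side, I apply (\ref{weilses}) to $M$. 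The group $H^1(I, M)$ is torsion: splitting off the torsion of $M$ and using that $\Hom_{\text{cont}}(I, \Z^r) = 0$, inflation-restriction gives $H^1(I, M_{\text{free}}) = H^1(\bar I, M_{\text{free}})$, which is finite, while $H^1(I, M_{\text{tors}})$ is killed by $|M_{\text{tors}}|$. Since $\Hom(-,\Q)$ kills torsion groups, the cokernel term in (\ref{weilses}) disappears after dualizing, so
$$
\Hom(H^{1-i}(W, M), \Q) \;\cong\; \Hom(H^{1-i}(\frak{w}, M^I), \Q).
$$

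For the last step I invoke Maschke's theorem once more: the composite $M^I \otimes \Q \hookrightarrow M \otimes \Q \twoheadrightarrow M_I \otimes \Q$ is an isomorphism of $\frak{w}$-modules. Let $V$ denote this common finite-dimensional $\Q$-vector space. Since $\Hom(N,\Q) = \Hom_\Q(N \otimes \Q, \Q)$ for any finitely generated abelian group $N$, and since $H^{1-i}(\frak{w},-)$ commutes with $-\otimes \Q$, the two displays identify the cup-product map in the statement with the pairing
$$
H^i(\frak{w}, \Hom_\Q(V, \Q)) \;\to\; \Hom_\Q(H^{1-i}(\frak{w}, V), \Q)
$$
induced by the evaluation $\Hom_\Q(V,\Q) \otimes V \to \Q$; this is an isomorphism by Example (\ref{gamma0vsduality}).

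The delicate step is naturality: after the Hochschild-Serre collapse and the Maschke identification of $M^I \otimes \Q$ with $M_I \otimes \Q$, one must check that the cup-product from the statement really does correspond to the standard evaluation pairing on $V$. This is a diagram chase using functoriality of cup-product in both variables together with compatibility of the Hochschild-Serre edge maps with cup-products; it is the place most likely to require bookkeeping, but it involves no new mathematical input.
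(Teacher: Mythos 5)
Your proof is correct and follows essentially the same strategy as the paper: kill the contribution of $I$ by passing to $\Q$-vector space coefficients, then invoke the $\frak{w}$-duality of Example (\ref{gamma0vsduality}). The paper streamlines this by replacing $M$ with $M_\Q$ at the outset (so that $H^i(I,M_\Q)=0$ for $i\geq 1$ by the torsion/divisibility argument, and both sides immediately collapse to $\frak{w}$-cohomology of $M_\Q^I$), whereas you rationalize only after taking $I$-(co)invariants and therefore need a separate appeal to Maschke to identify $M^I\otimes\Q$ with $M_I\otimes\Q$ --- a step the paper also uses but leaves implicit when it passes from $\Hom_\Q(M_\Q,\Q)^I$ to $\Hom_\Q(M_\Q^I,\Q)$.
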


\begin{proof}
Because the quotient $M_\Q/M$ has only trivial maps to $\Q$, our map can be identified with
$$H^j(W,\Hom_\Q(M_\Q,\Q))\rightarrow \Hom_\Q(H^{1-j}(W,M_\Q),\Q).$$
Since $H^i(I,V)$ vanishes for $i\geq 1$, proving the proposition is equivalent, by (\ref{weilses}), to showing that the map 
$$H^j(\frak{w},\Hom_\Q(M_\Q^I,\Q))\rightarrow \Hom_\Q(H^{1-j}(\frak{w},M_\Q^I),\Q)$$
is an isomorphism for all $j\geq 0$.  This follows immediately from our results on $\frak{w}$-duality for vector space coefficients.
\end{proof}

To prove that the maps $H^i(W,\Hom(M,U_{\bar{L}}))\rightarrow H^{2-i}(W,M)^*$ are all isomorphisms, we will use (\ref{weilses}).  In particular, for every $i\geq 0$, we have a map of short exact sequences
\begin{equation}\label{fivelemma2}
\xymatrix{
0\ar[d] &0\ar[d]\\
H^1(\frak{w},H^{i-1}(I,\Hom(M,U_{\bar{L}})))\ar[r]\ar[d] &H^0(\frak{w},H^{2-i}(I,M))^*\ar[d]\\
H^i(W,\Hom(M,U_{\bar{L}}))\ar[r]\ar[d] &H^{2-i}(W,M)^*\ar[d]\\
H^0(\frak{w},H^i(I,\Hom(M,U_{\bar{L}}))\ar[r]\ar[d] &H^1(\frak{w},H^{1-i}(I,M))^*\ar[d]\\
0 &0
}
\end{equation}
and to prove that the middle arrow is an isomorphism, we will prove, for $i\geq 1$, that the top and bottom horizontal arrows are isomorphisms.  It should be noted that the top and bottom arrows are not isomorphisms for all $i\geq 0$: take $M=\Z$ and $i=0$, then the bottom arrow is the zero map $U_K\rightarrow 0$.

Let $L'/L$ be a finite Galois extension with group $H$ and degree $e$.  Consider the long exact sequence in $H$-cohomology of $0\rightarrow U_{L'}\rightarrow L'^{\times}\rightarrow \Z\rightarrow 0$.  Since the groups $H^i(H,L'^{\times})$ and $H^i(H,\Q)$ vanish for $i\geq 1$, the long exact sequence reads
$$%\begin{equation}\label{nles}
0\rightarrow U_L\rightarrow L^\times\rightarrow \Z\rightarrow H^1(H,U_{L'})\rightarrow 0
$$%\end{equation}
which gives us a canonical identification $H^1(H,U_{L'})=\Z/e\Z$.  Taking the limit over all $L'/L$ gives an identification $H^1(I,U_{\bar{L}})=\QZ$.

Throughout the proof of the next proposition, we will make use of the Tate cohomology groups $\hat{H}^*(H,-)$, for a finite group $H$.  For definitions and basic properties of Tate cohomology groups, see Chapter VIII of \cite{serre1979}.

\begin{proposition}\label{intermediateduality}
The maps
$$%\begin{equation}\label{znduality}
H^i(\frak{w},H^1(I,\Hom(M,U_{\bar{L}})))\rightarrow H^{1-i}(\frak{w},H^0(I,M))^*
$$%\end{equation}
are isomorphisms for all $i\geq 0$.
\end{proposition}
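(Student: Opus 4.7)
The plan is to first establish an isomorphism of $\frak{w}$-modules
$$\alpha_M \colon H^1(I,\Hom(M,U_{\bar{L}})) \xrightarrow{\sim} \Hom(M^I,\QZ) = (M^I)^*,$$
induced by cup-product with the canonical identification $H^1(I,U_{\bar{L}}) \cong \QZ$, and then deduce the proposition from Proposition~\ref{cupproduct}. Since $M^I$ is finitely generated and free as an abelian group (because $M$ is), that proposition, applied with $R=\Z$, $D=\QZ$, and $\frak{w}$-module $N=M^I$, gives isomorphisms $H^i(\frak{w},(M^I)^*) \xrightarrow{\sim} H^{1-i}(\frak{w},M^I)^*$ for $i=0,1$, which combined with $\alpha_M$ yield the statement (after checking compatibility with the cup-product pairing).

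To construct $\alpha_M$, let $I'\subset I$ be an open normal subgroup acting trivially on $M$, set $H=I/I'$ (a finite group), and let $L'/L$ be the corresponding finite Galois extension. Since $I'$ acts trivially on $M$ and $M$ is $\Z$-free of finite rank, $H^q(I',\Hom(M,U_{\bar{L}})) = \Hom(M,H^q(I',U_{\bar{L}}))$, which equals $\Hom(M,U_{L'})$ for $q=0$, $\Hom(M,\QZ)$ for $q=1$, and vanishes for $q\geq 2$. The Hochschild--Serre spectral sequence then yields the exact sequence
$$0 \to H^1(H,\Hom(M,U_{L'})) \to H^1(I,\Hom(M,U_{\bar{L}})) \to \Hom_H(M,\QZ) \xrightarrow{d_2} H^2(H,\Hom(M,U_{L'})),$$
in which $\Hom_H(M,\QZ) = (M_H)^*$, the dual of the coinvariants.

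The main input is Tate--Nakayama duality for the extension $L'/L$, which gives $H^i(H,\Hom(M,L'^{\times})) \cong \hat{H}^{2-i}(H,M)^*$ for $i\geq 1$. Combining this with the long exact sequence in $H$-cohomology of $0\to U_{L'}\to L'^{\times}\to \Z\to 0$ applied to $\Hom(M,-)$, I would identify $H^1(H,\Hom(M,U_{L'}))$ with $\hat{H}^0(H,M)^*$ and the image of $d_2$ with $\hat{H}^{-1}(H,M)^*$. Comparing with the four-term exact sequence
$$0 \to \hat{H}^0(H,M)^* \to (M^H)^* \to (M_H)^* \to \hat{H}^{-1}(H,M)^* \to 0$$
obtained by dualizing the norm map $N_H\colon M_H\to M^H$ (whose kernel and cokernel are $\hat{H}^{-1}(H,M)$ and $\hat{H}^0(H,M)$ respectively) identifies $H^1(I,\Hom(M,U_{\bar{L}}))$ with $(M^H)^* = (M^I)^*$, yielding $\alpha_M$. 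The hard part is the bookkeeping: checking that the Tate--Nakayama identifications glue into the Hochschild--Serre sequence to match the four-term sequence above, and that $\alpha_M$ so constructed agrees with the cup-product map. This amounts to tracing the fundamental class of $L'/L$ in $H^2(H,L'^{\times})$ through the comparisons and verifying that, in the limit over $L'$, it corresponds to the generator of $H^1(I,U_{\bar{L}})=\QZ$ used to define $\alpha_M$.
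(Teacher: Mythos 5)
Your overall reduction — prove that $\alpha_M\colon H^1(I,\Hom(M,U_{\bar{L}}))\to (M^I)^*$ is a $\frak{w}$-equivariant isomorphism and then apply Proposition~\ref{cupproduct} with $R=\Z$, $D=\QZ$ — is exactly the paper's reduction, and packaging the rest via the full Hochschild--Serre five-term sequence (instead of the paper's inflation/restriction and transfer diagrams, borrowed from Lichtenbaum) is a reasonable variant.

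The decisive step, however, is wrong as stated, and the error is not cosmetic. You invoke ``Tate--Nakayama duality for $L'/L$'' in the form $H^i(H,\Hom(M,L'^\times))\cong \hat{H}^{2-i}(H,M)^*$ for $i\geq 1$, but that statement fails here: $L$ is the completion of the maximal unramified extension of $K$, so its residue field $\bar{k}$ is algebraically closed and $\Br(L)=0$. There is no fundamental class in $H^2(H,L'^\times)$; rather, $L'^\times$ (and hence $\Hom(M,L'^\times)$, since $M$ is $\Z$-free of finite rank) is \emph{cohomologically trivial} for $H$. So both $H^1(H,\Hom(M,L'^\times))$ and $H^2(H,\Hom(M,L'^\times))$ vanish, whereas your right-hand sides $\hat{H}^1(H,M)^*$ and $\hat{H}^0(H,M)^*$ generally do not (take $M=\Z$ with trivial action: $\hat{H}^0(H,\Z)=\Z/|H|\Z$). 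Feeding the false claim into the long exact sequence of $\Hom(M,-)$ applied to $0\to U_{L'}\to L'^\times\to\Z\to 0$ does not produce $H^1(H,\Hom(M,U_{L'}))\cong\hat{H}^0(H,M)^*$, precisely because you are positing a nonzero $H^1(H,\Hom(M,L'^\times))$ sitting next to it. The correct input — cohomological triviality of $\Hom(M,L'^\times)$, which is what the paper cites Serre for — collapses that long exact sequence into degree-shift isomorphisms $\hat{H}^{i-1}(H,\Hom(M,\Z))\xrightarrow{\sim}\hat{H}^i(H,\Hom(M,U_{L'}))$, a shift of $1$ rather than $2$: the relevant ``fundamental class'' lives in $H^1(H,U_{L'})$, not $H^2(H,L'^\times)$. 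Combining this with the cup-product duality $\hat{H}^{j}(H,\Hom(M,\Z))\cong\hat{H}^{-j}(H,M)^*$ gives exactly the identifications $H^1(H,\Hom(M,U_{L'}))\cong\hat{H}^0(H,M)^*$ and $H^2(H,\Hom(M,U_{L'}))\cong\hat{H}^{-1}(H,M)^*$ you want. With that corrected, and after verifying that $d_2$ is surjective (true, since $H^2(I,\Hom(M,U_{\bar{L}}))=0$, as one sees from the vanishing of $H^2(I,\Hom(M,\bar{L}^\times))$ and $H^1(I,\Hom(M,\Q))$), your splice with the dualized norm sequence should go through; as written, though, the argument rests on an input that is false in this setting.
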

\begin{proof}

Let $M$ be an $I$-module which is finitely generated and free as an abelian group.  We will show that
$$%\begin{equation}\label{ncup}
\alpha: H^1(I,\Hom(M,U_{\bar{L}})))\rightarrow H^{0}(I,M)^*
$$%\end{equation}
is an isomorphism.  The proposition then follows easily from $\frak{w}$-duality.  If $M=\Z$ with trivial $I$-action, then $\alpha$ is the canonical map $\QZ\rightarrow \Z^*$.  Since cohomology commutes with direct sums, this proves the result for $M$ with trivial action.

Now let $M$ be any finitely generated $I$-module, and choose a finite Galois extension $L'/L$ with group $H$, such that the open subgroup $I'=\Gal(\bar{L}/L')$ of $N$ acts trivially on $M$.   We mimic the argument on page 128 of \cite{lichtenbaum1969}.  Consider the diagrams
$$%\begin{equation}\label{infresn}
\xymatrix{
0\ar[r] &H^{1}(H,\Hom(M,U_{L'}))\ar[r]^-{\text{inf}}\ar[d]^-{\gamma} &H^{1}(I,\Hom(M,U_{\bar{L}}))\ar[r]^-{\text{res}}\ar[d]^-{\alpha} &H^{1}(I',\Hom(M,U_{\bar{L}}))\ar[d]^-{\alpha'}\\
0\ar[r] &\hat{H}^{0}(H,M)^{*}\ar[r] &H^{0}(I,M)^{*}\ar[r]^-{\text{tr}^{*}} &M^{*}
}
$$%\end{equation}
and
$$%\begin{equation}\label{trn}
\xymatrix{
H^{1}(I',\Hom(M,U_{\bar{L}}))\ar[r]^-{\text{tr}}\ar[d]^-{\alpha'} &H^{1}(I,\Hom(M,U_{\bar{L}}))\ar[d]^-{\alpha}\\
M^{*}\ar[r] &H^{0}(I,M)^{*}\ar[r] &0
}
$$%\end{equation}
Where $\alpha'$ is the corresponding map for $L'$.  By the previous paragraph, $\alpha'$ is an isomorphism.  From the second of the above two diagrams, we see that $\alpha$ is surjective.  To show that $\alpha$ is an isomorphism, we only need to show it is injective, and for this, it suffices to prove that $\gamma$ is an isomorphism.

By Chapter X, \S 7, Proposition 11 and Chapter IX, \S 5, Theorem 9 of \cite{serre1979}, $\Hom(M,L'^{\times})$ is cohomologically trivial for $H$.  Applying the exact functor $\Hom(M,-)$ to $0\rightarrow U_{L'}\rightarrow L'^{\times}\rightarrow \Z\rightarrow 0$ and taking reduced $H$-cohomology gives us an isomorphism
$$%\begin{equation}
\delta:\hat{H}^0(H,\Hom(M,\Z))\rightarrow H^1(H,\Hom(M,U_{L'}))
$$%\end{equation}
which commutes with cup-product in the sense that the diagram
$$%\begin{equation}
\xymatrix{
\hat{H}^0(H,\Hom(M,\Z))\ar[r]\ar[d]^-\delta &\hat{H}^0(H,M)^*\ar@{=}[d]\\
H^1(H,\Hom(M,U_{L'}))\ar[r]^-\gamma &\hat{H}^0(H,M)^*
}
$$%\end{equation}
commutes.  By (\cite{neukirch2008}, Chapter III, Proposition 3.1.2) the top horizontal arrow is an isomorphism, and we conclude that $\gamma$ is an isomorphism.
\end{proof}

\begin{proposition}\label{intermediateduality2}
The map
$$%\begin{equation}\label{znduality2}
H^1(\frak{w},H^0(I,\Hom(M,U_{\bar{L}})))\rightarrow H^0(\frak{w},H^1(I,M))^*
$$%\end{equation}
induced by cup-product is an isomorphism.
\end{proposition}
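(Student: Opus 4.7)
I would follow the same pattern as Proposition (\ref{intermediateduality}): reduce to a statement about a finite quotient of $I$ via inflation-restriction, apply Tate cohomology duality at that level, and then handle the discrepancy between $H^0$ and $\hat{H}^0$ by a vanishing argument extending Lemma (\ref{H1vanish})(iii). Choose an open normal subgroup $I'\trianglelefteq W$ with $I'\subseteq I$ acting trivially on $M$, set $H=I/I'$, and let $L'=\bar{L}^{I'}$, a $\frak{w}$-stable finite (necessarily totally ramified) Galois extension of $L$ with group $H$. Since $M$ is $\Z$-free and $I'$ profinite, $H^1(I',M)=\Hom_{\text{cts}}(I',M)=0$, so inflation-restriction yields $H^1(I,M)=H^1(H,M)$, a finite $\frak{w}$-module; likewise $H^0(I,\Hom(M,U_{\bar{L}}))=\Hom_H(M,U_{L'})$.

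The cup-product pairing $\Hom_H(M,U_{L'})\otimes H^1(H,M)\to H^1(H,U_{L'})=\frac{1}{|H|}\Z/\Z$ factors through the surjection $\Hom_H(M,U_{L'})\twoheadrightarrow \hat{H}^0(H,\Hom(M,U_{L'}))$. Using the cohomological triviality of $\Hom(M,L'^\times)$ quoted in the preceding proof and the Tate coboundary attached to $0\to U_{L'}\to L'^\times\to \Z\to 0$, I identify $\hat{H}^0(H,\Hom(M,U_{L'}))\simeq \hat{H}^{-1}(H,\Hom(M,\Z))$; the Neukirch pairing (\cite{neukirch2008}, III.3.1.2) at degree $-1$ then identifies the right-hand side with $H^1(H,M)^*$. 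This produces a short exact sequence of $\frak{w}$-modules
\begin{equation*}
0\to N_H\Hom(M,U_{L'})\to \Hom_H(M,U_{L'})\to H^1(H,M)^*\to 0
\end{equation*}
whose rightmost map coincides with the map induced by cup-product. Since $H^1(H,M)$ is finite, Example (\ref{gamma0finiteduality}) rewrites the target of the proposition as $H^0(\frak{w},H^1(H,M))^*=H^1(\frak{w},H^1(H,M)^*)$, so the problem reduces to showing $H^1(\frak{w},N_H\Hom(M,U_{L'}))=0$.

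This last vanishing, which I expect to be the main obstacle, I would prove by extending Lemma (\ref{H1vanish})(iii) to $U_{L'}$. Because $L'/L$ is totally ramified, $L'$ has residue field $\bar{k}$, and $U_{L'}$ admits a $\frak{w}$-stable filtration whose successive quotients are $\bar{k}^\times$ and $\bar{k}$; the inductive argument of Lemma (\ref{H1vanish})(iii) then goes through verbatim, giving $H^1(\frak{w},\Hom(M,U_{L'}))=0$. The normality of $H$ in $W/I'$ makes $N_H:\Hom(M,U_{L'})\to \Hom_H(M,U_{L'})$ a $\frak{w}$-equivariant map, so combining the surjection $\Hom(M,U_{L'})\twoheadrightarrow N_H\Hom(M,U_{L'})$ with $\text{scd}(\frak{w})=1$ (which kills $H^2(\frak{w},\ker N_H)$) forces $H^1(\frak{w},N_H\Hom(M,U_{L'}))=0$, completing the argument.
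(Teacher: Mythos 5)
Your proposal matches the paper's proof in structure: reduce to the finite quotient $H=I/I'$, apply Tate--Nakayama at that level (using cohomological triviality of $\Hom(M,L'^{\times})$ and the Tate coboundary of $0\to U_{L'}\to L'^{\times}\to\Z\to 0$ to identify $\hat{H}^{0}(H,\Hom(M,U_{L'}))$ with $\hat{H}^{-1}(H,\Hom(M,\Z))\simeq H^{1}(H,M)^{*}$), and then, after rewriting the target via Example~(\ref{gamma0finiteduality}), reduce the proposition to the single vanishing $H^{1}(\frak{w},\text{Nm}_{H}\Hom(M,U_{L'}))=0$. The one genuine variation is in how you establish this last vanishing. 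The paper works with $\text{Nm}_{H}\Hom(M,U_{L'})$ directly, reducing modulo $U_{L'}^{(1)}$, using that $H$ acts trivially on $\bar{k}^{\times}$ to identify $\text{Nm}_{H}(\Hom(M,\bar{k}^{\times}))=\Hom(\text{Nm}_{H}(M),\bar{k}^{\times})$, and then applying Lemma~(\ref{H1vanish}) to the $\frak{w}$-module $\text{Nm}_{H}(M)$. You instead prove the a priori stronger statement $H^{1}(\frak{w},\Hom(M,U_{L'}))=0$ by rerunning the filtration argument of Lemma~(\ref{H1vanish})(iii) over $L'$ in place of $L$, and deduce the required vanishing via the surjection onto the norm image together with $\text{scd}(\frak{w})=1$. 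Both routes are sound; your variant has a cleaner endgame, at the mild price of fixing a splitting $W/I'\simeq H\rtimes\frak{w}$ so that $\frak{w}$ acts on $\Hom(M,U_{L'})$ itself. One detail that ``goes through verbatim'' elides: since $L'/L$ is ramified, a lift of Frobenius acts on the higher graded pieces $U_{L'}^{(i)}/U_{L'}^{(i+1)}\simeq\bar{k}$ by a unit twist of $x\mapsto x^{q}$ (coming from $\sigma(\pi')/\pi'$); this twist can be removed by a scalar change of coordinates, using divisibility of $\bar{k}^{\times}$, but it should be mentioned before citing Lemma~(\ref{H1vanish})(ii).
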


\begin{proof}
Let $L'/L$ be a finite Galois extension with group $H$, such that $I'=\Gal(\bar{L}/L')$ acts trivially on $M$.  Using the same argument as in the previous proposition and (\cite{neukirch2008}, Chapter III, Proposition 3.1.2), we see that the map
$$%\begin{equation}
\hat{H}^0(H,\Hom(M,U_{L'}))\rightarrow H^1(H,M)^*
$$%\end{equation}
is an isomorphism.  

We claim that there is an isomorphism 
$$%\begin{equation}
H^1(\frak{w},H^0(N,\Hom(M,U_{\bar{L}})))=H^1(\frak{w},\hat{H}^0(H,\Hom(M,U_{L'}))).
$$%\end{equation}
Together with the fact that the inflation map $H^1(H,M)\rightarrow H^1(I,M)$ is an isomorphism (since $H^1(I',M)=0$), this will suffice to prove the proposition.  It is clear that $H^0(I,\Hom(M,U_{\bar{L}}))=H^0(H,\Hom(M,U_{L'}))$, which reduces us to showing that the natural map
$$%\begin{equation}
H^1(\frak{w},H^0(H,\Hom(M,U_{L'})))\rightarrow H^1(\frak{w},\hat{H}^0(H,\Hom(M,U_{L'})))
$$%\end{equation}
is an isomorphism.

For any $H$-module $P$, let $\text{Nm}_H:P\rightarrow H^0(H,P)$ be the norm map, defined by $m\mapsto \sum_{s\in H}s\cdot m$.  Consider the reduction map 
$$\text{Nm}_{H}(\Hom(M,U_{L'}))\rightarrow \text{Nm}_{H}(\Hom(M,U_{L'}/U_{L'}^{(1)}))=\text{Nm}_{H}(\Hom(M,\bar{k}^{\times})).$$
The extension $L'/L$ is totally ramified, hence $H$ acts trivially on $\bar{k}^{\times}$.  Thus for any $f:M\rightarrow \bar{k}^{\times}$, we have
$$\text{Nm}_{H}(f)(m) = \sum_{s\in H}sf(s^{-1}m) = \sum_{s\in H}f(s^{-1}m) = f(\text{Nm}_{H}(m))$$
which proves that $\text{Nm}_{H}(\Hom(M,\bar{k}^{\times}))=\Hom(\text{Nm}_{H}(M),\bar{k}^{\times})$.  As $\text{Nm}_{H}(M)\subset M$ is a finitely generated and free abelian group, we see from (\ref{H1vanish}) that $H^1(\frak{w},\Hom(\text{Nm}_{H}(M),\bar{k}^{\times}))=0$, and the same argument shows that $H^1(\frak{w},\Hom(\text{Nm}_{H}(M),\bar{k}))=0$. One now proceeds in the same fashion as in the proof of (\ref{H1vanish}) to show that $H^{1}(\frak{w},\text{Nm}_{H}(\Hom(M,U_{L'})))=0$.  
\end{proof}

\begin{proposition}
The maps
\begin{equation}\label{wcup}
H^i(W,\Hom(M,U_{\bar{L}}))\rightarrow H^{2-i}(W,M)^*
\end{equation}
are isomorphisms for all $i\geq 0$.  For $i=0$, this is a topological isomorphism of profinite groups.
\end{proposition}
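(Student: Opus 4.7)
The plan is to apply the Five Lemma to the diagram (\ref{fivelemma2}) on a case-by-case basis in $i$, invoking (\ref{intermediateduality}) and (\ref{intermediateduality2}) as the core inputs, and handling the corner cases $i=0$ and $i\geq 2$ by separate auxiliary arguments. For $i=1$, both the top and bottom horizontal maps of (\ref{fivelemma2}) are isomorphisms by (\ref{intermediateduality2}) and (\ref{intermediateduality}) (with its $i=0$), respectively, so the Five Lemma gives the result immediately. For $i=2$, the top map is (\ref{intermediateduality}) with its $i=1$, and the target of the bottom map vanishes because $H^{-1}(I,M)=0$; I therefore need to show that the bottom source $H^0(\frak{w},H^2(I,\Hom(M,U_{\bar{L}})))$ also vanishes.

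To prove that vanishing, I would reduce (by passing to a finite Galois extension $L'/L$ whose associated open subgroup $I'\subset I$ acts trivially on $M$) to the case where $I$ acts trivially on $M$, so that $\Hom(M,U_{\bar{L}})\cong U_{\bar{L}}^r$. Then $H^i(I,U_{\bar{L}})$ vanishes for $i\geq 2$: from the exact sequence $0\rightarrow U_{\bar{L}}\rightarrow \bar{L}^\times \rightarrow \Q\rightarrow 0$ one has $H^i(I,\bar{L}^\times)=0$ for $i\geq 1$ since $L$ is $C_1$, and $H^i(I,\Q)=0$ for $i\geq 1$ since $\Q$ is uniquely divisible and profinite-group cohomology is torsion in positive degrees. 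For general $M$, a Hochschild--Serre spectral sequence for $I'\subset I$ reduces the claim to the trivial-action case. The same vanishing also handles $i\geq 3$: the right-hand side is zero by inspection, and the left-hand side is zero because $H^i(W,-)=0$ for $i\geq 4$, while at $i=3$ the relevant $H^1(\frak{w},H^2(I,\Hom(M,U_{\bar{L}})))$ vanishes by the same computation combined with $H^3(I,-)=0$ from $\text{scd}(I)=2$.

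The main obstacle, and the step that requires a genuinely different idea, is $i=0$: the map in question is $H^0(W,\Hom(M,U_{\bar{L}}))\rightarrow H^2(W,M)^\ast$, and here neither the top nor the bottom of (\ref{fivelemma2}) is covered by the earlier propositions. My plan is to reduce to the case that $I$ acts trivially on $M$, and then further to $M=\Z$ by passing through an appropriate decomposition of a free $\frak{w}$-module (after extension of scalars); the general free $\frak{w}$-module case would follow by a limit/Shapiro-type argument using a finite unramified extension to absorb the $\frak{w}$-action. For $M=\Z$, the left-hand side is $U_K$, while (\ref{weilses}) together with $H^1(I,\Z)=0$ identifies $H^2(W,\Z)$ with $H^0(\frak{w},\Hom_{\text{cont}}(I,\QZ))$. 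Local class field theory then identifies this latter group with $U_K^\ast$, and Pontryagin duality for the profinite group $U_K$ gives $U_K=U_K^{\ast\ast}$, with the isomorphism being topological by construction. Threading through the reduction to $M=\Z$, each step preserves the profinite topology coming from (\ref{H0finite}) part (iii), yielding the claimed topological isomorphism of profinite groups.
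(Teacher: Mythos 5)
Your treatment of $i\geq 1$ follows the paper's structure (Five Lemma on diagram (\ref{fivelemma2}) with (\ref{intermediateduality}) and (\ref{intermediateduality2}) as inputs), and your instinct that one needs the vanishing $H^2(I,\Hom(M,U_{\bar{L}}))=0$ for $i\geq 2$ is correct and more explicit than what the paper writes. However, the Hochschild--Serre reduction you propose for that vanishing does not close: with $H=I/I'$ finite, the $E_2$-terms $H^1(H,H^1(I',\Hom(M,U_{\bar{L}})))$ and $H^2(H,H^0(I',\Hom(M,U_{\bar{L}})))$ contribute to $H^2(I,-)$ and have no reason to vanish. A working argument instead applies $H^*(I,-)$ directly to $0\to\Hom(M,U_{\bar{L}})\to\Hom(M,\bar{L}^\times)\to\Hom(M,\Q)\to 0$: here $H^1(I,\Hom(M,\Q))=0$ because $\Hom(M,\Q)$ is uniquely divisible, and $H^2(I,\Hom(M,\bar{L}^\times))=0$ because $\Hom(M,\bar{L}^\times)$ is divisible and $\text{cd}(I)=1$, so $H^2(I,\Hom(M,U_{\bar{L}}))$ is squeezed between two zeros.

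The real gap is at $i=0$, and it is where your proof diverges genuinely from the paper's. The proposed reduction ``to $M=\Z$ by passing through an appropriate decomposition of a free $\frak{w}$-module (after extension of scalars)'' is not available: a finitely generated free $\frak{w}$-module is $\Z^r$ with a matrix in $\mathrm{GL}_r(\Z)$ acting, and such modules do not decompose into copies of $\Z$ over $\Z$; extension of scalars changes the coefficients of the cohomology in a way that doesn't descend, and Shapiro's lemma only applies to induced modules, which general $\frak{w}$-modules are not. Moreover, invoking local class field theory to identify $H^0(\frak{w},\Hom_{\mathrm{cont}}(I,\QZ))$ with $U_K^*$ is contrary to the point of the theorem this proposition feeds into, which the paper explicitly presents as an LCFT-independent route to Tate--Nakayama duality; it risks circularity. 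The paper's actual $i=0$ argument avoids all of this: it maps the Kummer-sequence boundary exact sequence for $\Hom(M,U_{\bar{L}})$ (using divisibility of $U_{\bar{L}}$) into the dual of the Kummer-sequence exact sequence for $M$, recognizes the middle vertical arrow as Weil--Tate local duality (\ref{weiltld}) applied to the finite modules $\Hom(M/n,\mu_n)$ and $M/n$, recognizes the right vertical arrow as the already-established $i=1$ case restricted to $n$-torsion, applies the Five Lemma to get $H^0(W,\Hom(M,U_{\bar{L}}))/n\stackrel{\sim}{\to}H^2(W,M)^*[n]$ for all $n$, passes to the inverse limit, and finally uses Lemma (\ref{H0finite}) to see that $H^0(W,\Hom(M,U_{\bar{L}}))$ is already profinite so agrees with its own completion. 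You should replace your $i=0$ step with this bootstrapping argument (or something equivalent that treats general $M$ uniformly rather than trying to reduce to $M=\Z$).
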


\begin{proof}
For $i\geq 1$, this follows immediately from (\ref{fivelemma2}), (\ref{intermediateduality}), and (\ref{intermediateduality2}).  For $i=0$, multiplication-by-$n$ Kummer sequences for $M$ and $\Hom(M,U_{\bar{L}})$ give rise to a map of long exact cohomology sequences, the relevant part of which reads
$$%\begin{equation}
\xymatrix{
0\ar[r] &H^{0}(\Hom(M,U_{\bar{L}}))/n\ar[r]^-{\delta}\ar[d] &H^{1}(\Hom(M,U_{\bar{L}})[n])\ar[r]\ar[d]^-\wr &H^{1}(\Hom(M,U_{\bar{L}}))[n]\ar[d]^-\wr\\
0\ar[r] &H^{2}(M)[n]^{*}\ar[r]^-{\delta^{*}} &H^{1}(M/n)^{*}\ar[r] &(H^{1}(M)/n)^{*}
}
$$%\end{equation}
where we have written $H^i(-)$ for $H^i(W,-)$ for convenience.  The middle arrow is Weil-Tate Local Duality applied to the finite modules $\Hom(M,U_{\bar{L}})[n]=\Hom(M/n,\mu_n)$ and $M/n$.  The right-most arrow is the map of (\ref{wcup}) when $i=1$, restricted to $n$-torsion; it is therefore an isomorphism.  We conclude by the Five Lemma that the left-most vertical arrow is an isomorphism for all $n$.  Taking the inverse limit, we conclude that
$$%\begin{equation}
\varprojlim_n H^{0}(W,\Hom(M,U_{\bar{L}}))/n \rightarrow H^2(W,M)^*
$$%\end{equation}
is an isomorphism (recall that $H^2(W,M)$ is torsion).

It remains to show that the natural map 
$$%\begin{equation}
H^{0}(W,\Hom(M,U_{\bar{L}}))\rightarrow \varprojlim_{n}H^{0}(W,\Hom(M,U_{\bar{L}}))/n
$$%\end{equation}
is an isomorphism; in other words, to show that $H^{0}(W,\Hom(M,U_{\bar{L}}))$ is profinite.  If $I$ acts trivially on $M$, then this is contained in the statement of (\ref{H0finite}).  Otherwise, pick an open normal subgroup $I'$ of $I$ acting trivially on $M$, corresponding to a finite Galois extension $L'/L$ with group $H$.  We have
$$H^{0}(W,\Hom(M,U_{\bar{L}})) = H^{0}(\frak{w},H^{0}(H,\Hom(M,U_{L'})))$$
and the latter group is clearly profinite, hence we are done.
\end{proof}

Theorem (\ref{fgweilduality}) was proven by Jiang in his thesis (see Proposition 4.15 and Theorem 5.3 of \cite{jiang2006}).  However, Jiang assumes that $M$ is a $G$-module, and uses Tate-Nakayama Duality in his proof.  We have removed the condition that $M$ be a $G$-module, and presented a proof which is independent of the main results of Local Class Field Theory.

A natural question to ask is whether the other map induced by the cup-product pairing, namely
\begin{equation}
\eta(M):R\Gamma_W(M)\rightarrow R\Hom(R\Gamma_W(M^D),\Z[-1]),
\end{equation}
is an isomorphism.  The next proposition shows that this map fails to be an isomorphism when $M=\Z$ with trivial action, due to the non-trivial natural topology on the cohomology groups of the complex $R\Gamma_W(M^D)$.  However, elucidating this map will prove useful in later sections, so we now describe it explicitly.

\begin{proposition}\label{almostduality}
The map
\begin{equation}
\eta(\Z):R\Gamma_W(\Z)\rightarrow R\Hom(R\Gamma_W(\bar{L}^\times),\Z[-1])
\end{equation}
has the following properties:
\begin{itemize}
\item[(i)] $\eta(\Z)^i$ is an isomorphism for $i\neq 2$.
\item[(ii)] $\eta(\Z)^2$ induces an isomorphism of $H^2(W,\Z)$ with the torsion subgroup $U_K^*$ of $\Ext^2(R\Gamma_W(\bar{L}^\times),\Z[-1])$.
\end{itemize}
The cohomology of both complexes vanishes outside of degrees $0$ through $2$.
\end{proposition}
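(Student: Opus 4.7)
The plan is to compute the cohomology of both complexes, handle the isomorphism claim in degrees $0$ and $1$ by direct cup-product formulas, and reduce the degree-$2$ statement to a canonical evaluation map using the isomorphism $\psi(\Z)$ from \ref{fgweilduality}. That evaluation map is then identified with the natural inclusion $U_K^*\hookrightarrow\Ext(U_K,\Z)$ as the torsion subgroup.

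First, for cohomology: the spectral sequence \ref{weilses}, together with $H^1(I,\Z)=0$, gives $H^0(W,\Z)=\Z$ and $H^1(W,\Z)=H^1(\frak{w},\Z)=\Z$. The long exact sequence of $0\to\Z\to\Q\to\QZ\to0$, combined with $H^2(W,\Q)=0$ (from the proof of \ref{h3vanish}), $H^1(W,\Q)=\Q$, and $H^1(W,\QZ)=(K^\times)^*$ from local reciprocity for $W$, yields $H^2(W,\Z)=U_K^*$; vanishing in degree $\geq3$ is \ref{h3vanish}. On the dual side, Proposition \ref{rhomz} with $n=1$ and $C=R\Gamma_W(\bar{L}^\times)$ gives $\Z$, $\Hom(K^\times,\Z)=\Z$ (using $\Hom(U_K,\Z)=0$), and $\Ext(K^\times,\Z)=\Ext(U_K,\Z)$ in degrees $0$, $1$, $2$, with zero elsewhere.

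For $\eta(\Z)^0$ and $\eta(\Z)^1$, direct cup-product calculation suffices. The pairing $H^0(W,\Z)\otimes H^1(W,\bar{L}^\times)\to H^1(W,\bar{L}^\times)=\Z$ is multiplication, so $\eta(\Z)^0$ is the identity. The pairing $H^1(W,\Z)\otimes H^0(W,\bar{L}^\times)\to H^1(W,\bar{L}^\times)=\Z$ sends a generator $\chi\in H^1(W,\Z)$ (mapping Frobenius to $1$) and $a\in K^\times$ to $v(a)$, so $\eta(\Z)^1$ sends the generator to the valuation, itself a generator of $\Hom(K^\times,\Z)$. For $\eta(\Z)^2$: the two tensor-hom adjunctions attached to the pairing $R\Gamma_W(\Z)\otimes^LR\Gamma_W(\bar{L}^\times)\to\Z[-1]$ yield a factorization $\eta(\Z)=R\Hom(\psi(\Z),\Z[-1])\circ\mathrm{ev}$, where $\mathrm{ev}\colon R\Gamma_W(\Z)\to R\Hom(R\Hom(R\Gamma_W(\Z),\Z[-1]),\Z[-1])$ is the canonical double-duality evaluation. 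Since $\psi(\Z)$ is an isomorphism by \ref{fgweilduality}, the kernel, image, and cokernel of $\eta(\Z)^2$ coincide with those of $\mathrm{ev}^2$ up to isomorphism. Using that $\Ext^k(-,-)=0$ over $\Z$ for $k\geq2$, the object $R\Gamma_W(\Z)$ is quasi-isomorphic to the direct sum $\Z\oplus\Z[-1]\oplus U_K^*[-2]$, and $\mathrm{ev}^2$ is determined by the $U_K^*[-2]$-summand: using $R\Hom(U_K^*[-2],\Z[-1])\simeq U_K$ (via the Pontryagin identification $\Ext(U_K^*,\Z)=U_K$) and the canonical map $\QZ\to\Z[1]$, one finds that $\mathrm{ev}^2\colon U_K^*\to\Ext(U_K,\Z)$ is the natural map $\chi\mapsto[\chi]$, equivalently the pullback of $0\to\Z\to\Q\to\QZ\to0$ along $\chi$.

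Finally, this natural map is injective with image equal to the torsion subgroup. Injectivity: if $\chi=\pi\circ f$ for some $f\colon U_K\to\Q$, then since $\chi\in U_K^*$ has image in a finite $\tfrac{1}{n}\Z/\Z$, $f$ can be chosen in $\tfrac{1}{n}\Z$, whence $nf\in\Hom(U_K,\Z)=0$ and $\chi=0$. For the image: any $\chi\in U_K^*$ factors through a finite quotient of $U_K$ and so has finite order; conversely, given torsion $[\phi]\in\Ext(U_K,\Z)$ with $n\phi=\pi f$, the representative $\phi-\pi(f/n)$ takes values in $\tfrac{1}{n}\Z/\Z$, and every abstract homomorphism from $U_K=\mu(K)\times U_K^{(1)}$ to a finite group is continuous---on the pro-$p$ factor $U_K^{(1)}$, any homomorphism to $\Z/n$ kills $p^a U_K^{(1)}$ (with $p^a\|n$) and hence factors through a finite quotient. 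The automatic-continuity verification is the main technical subtlety of the proof.
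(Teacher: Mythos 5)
Your degree-$0$ and degree-$1$ computations match the paper's. For degree $2$, however, you take a genuinely different route. The paper works directly with the Kummer sequences $0\to\Z\to\Z\to\Z/n\to 0$ and $0\to\mu_n\to U_{\bar{L}}\to U_{\bar{L}}\to 0$, compares $H^2(W,\Z)[n]$ against $U_K^*[n]$ using Weil--Tate local duality at each finite level, and passes to the limit. You instead exploit the factorization $\eta(\Z)=R\Hom(\psi(\Z),\Z[-1])\circ\mathrm{ev}$ through the biduality map, invoke the already-proven isomorphism $\psi(\Z)$ from Theorem~\ref{fgweilduality} to reduce everything to $\mathrm{ev}^2$, and then identify $\mathrm{ev}^2$ with the connecting map $U_K^*\hookrightarrow\Hom(U_K,\QZ)\to\Ext(U_K,\Z)$. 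This is a valid and arguably more conceptual approach; it also forces you to prove explicitly that the torsion subgroup of $\Ext(U_K,\Z)$ is exactly $U_K^*$, via the automatic-continuity argument on the pro-$p$ part of $U_K$ --- a point the paper asserts as a ``simple calculation'' without justification, so your treatment is more careful here. Two caveats: (a) your identification of $\mathrm{ev}^2$ with the connecting homomorphism is stated rather than derived, and it deserves a couple of lines (e.g.\ tracing through the triangle $\Z\to\Q\to\QZ$ and the quasi-isomorphism $R\Hom(T,\Z)\simeq\Hom(T,\QZ)[-1]$ for torsion $T$); (b) you compute $H^2(W,\Z)=U_K^*$ by invoking ``local reciprocity for $W$'' to get $H^1(W,\QZ)=(K^\times)^*$, which is not circular in the paper's logical structure (since \ref{almostduality} is not used to prove reciprocity) but does cut against the paper's aim of deriving local class field theory rather than assuming it; the group $H^2(W,\Z)$ can instead be extracted from the isomorphism $\psi(\Z)$ together with the easy observation that $H^2(W,\Z)$ is torsion.
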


\begin{proof}
For $i=0,1$, explicit calculation using (\ref{rhomz}) shows that the cohomology of both sides is $\Z$, and the map between them is the identity map.  For $i\geq 3$, using (\ref{rhomz}) one shows easily that the cohomology of both complexes vanishes.

The only assertion left to prove is that of (ii).  Coming from the sequences $0\rightarrow \Z\rightarrow \Z\rightarrow \Z/n\Z\rightarrow 0$ and $0\rightarrow \mu_n\rightarrow U_{\bar{L}}\rightarrow U_{\bar{L}}\rightarrow 0$ we have a map of short exact sequences
$$%\begin{equation}
\xymatrix{
0\ar[r] &H^1(W,\Z)/n\ar[r]\ar[d]^-\wr &H^1(W,\Z/n\Z)\ar[r]^\delta\ar[d]^-\wr &H^2(W,\Z)[n]\ar[r]\ar[d] &0\\
0\ar[r] &(H^1(W,U_{\bar{L}})^*)/n\ar[r] &H^1(W,\mu_n)^*\ar[r]^-{\delta^*} &U_K^*[n]\ar[r] &0.
}
$$%\end{equation}
Here the left vertical arrow is the natural isomorphism $\Z/n\Z\rightarrow \hat{\Z}/n\hat{\Z}$, and the middle arrow is the isomorphism of Weil-Tate Local Duality.  Therefore the right vertical arrow is an isomorphism, and by passing to the limit over all $n$, we see that $H^2(W,\Z)\rightarrow U_K^*$ is an isomorphism.  A simple calculation using (\ref{rhomz}) shows that $\Ext^2(R\Gamma_W(\bar{L}^\times),\Z[-1])=\Ext(U_K,\Z)$, whose torsion subgroup is $U_K^*$.
\end{proof}

\begin{corollary}
The cohomology groups $H^i(W,\Z)$ are given by
$$%\begin{equation}
H^i(W,\Z)=\left\{
\begin{array}{ll}
\Z & i=0,1\\
U_K^* & i=2\\
0 & i\geq 3.
\end{array}
\right.
$$%\end{equation}
\end{corollary}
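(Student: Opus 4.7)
The plan is to read the values of $H^i(W,\Z)$ directly off Proposition (\ref{almostduality}), combined with Proposition (\ref{rhomz}) applied to the complex $R\Gamma_W(\bar{L}^\times)$, whose cohomology was computed in Proposition (\ref{weilecross}) to be $K^\times$ in degree $0$, $\Z$ in degree $1$, and zero elsewhere.

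First, I would dispose of the trivial degrees. For $i=0$ there is nothing to say, since $W$ acts trivially on $\Z$ and so $H^0(W,\Z)=\Z^W=\Z$. For $i\geq 3$, Proposition (\ref{almostduality}) already asserts that the cohomology of both $R\Gamma_W(\Z)$ and $R\Hom(R\Gamma_W(\bar{L}^\times),\Z[-1])$ vanishes outside degrees $0$ through $2$, so $H^i(W,\Z)=0$ in that range.

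For the two remaining degrees $i=1,2$, I would invoke the isomorphism $\eta(\Z)^i$ from Proposition (\ref{almostduality}) to transfer the question to a computation of $\Ext^i(R\Gamma_W(\bar{L}^\times),\Z[-1])$, which Proposition (\ref{rhomz}) expresses via a short exact sequence
$$0\to \Ext(H^{2-i}(R\Gamma_W(\bar{L}^\times)),\Z)\to \Ext^i(R\Gamma_W(\bar{L}^\times),\Z[-1])\to \Hom(H^{1-i}(R\Gamma_W(\bar{L}^\times)),\Z)\to 0.$$
For $i=1$, the outer terms become $\Ext(\Z,\Z)=0$ and $\Hom(K^\times,\Z)$; the latter is $\Z$ because the profinite subgroup $U_K\subset K^\times$ has no nontrivial homomorphisms to $\Z$, and $K^\times/U_K\cong\Z$. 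For $i=2$, Proposition (\ref{almostduality})(ii) directly identifies $H^2(W,\Z)$ with $U_K^*$, so no further computation is required.

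There is no real obstacle here: all the substantive input (cup-product duality, the identification of the torsion subgroup of $\Ext(K^\times,\Z)$ with $U_K^*$, and the vanishing in high degrees) has already been established in Propositions (\ref{weilecross}), (\ref{rhomz}), and (\ref{almostduality}). The corollary is essentially a tabulation of those results.
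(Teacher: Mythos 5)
Your proposal is correct and matches the paper's intent: the paper's proof of this corollary is literally the one line "This is contained in the proof of the previous proposition," and the content of that proof establishes exactly the values you tabulate (the degrees $0,1$ and $\geq 3$ via the explicit $\Ext$ computation with (\ref{rhomz}) and (\ref{weilecross}), and degree $2$ via the Kummer-sequence diagram identifying $H^2(W,\Z)$ with $U_K^*$). Your filling-in of the $\Hom(K^\times,\Z)=\Z$ step is the same calculation the paper's "explicit calculation using (\ref{rhomz})" refers to.
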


\begin{proof}
This is contained in the proof of the previous proposition.
\end{proof}

%%%%%%%%%%%%%%%%%%%%%%%%%%%%%%%%%%%%%%%%%%%%%%%
%%%%%%%%%%%%%%    Section 4     %%%%%%%%%%%%%%%%%%%%%%
%%%%%%%%%%%%%%%%%%%%%%%%%%%%%%%%%%%%%%%%%%%%%%%

\section{Local Class Field Theory via the Weil Group}
\label{sec:Local Class Field Theory via the Weil Group}

In this section we show how to deduce the main theorems of Local Class Field Theory from the theorems of the previous chapter.  We prove, in particular, that $\text{Br}(K)=\QZ$ and that there is a canonical isomorphism $K^\times\otimes\hat{\Z}\rightarrow G^{ab}$.

%%%%%%%%%%%%%%%%%%%%%%%%%
\subsection{Comparison with Galois Cohomology}

Any $G$-module can be given a $W$-module structure via the map $W\rightarrow G$, and this process induces restriction maps $H^i(G,M)\rightarrow H^i(W,M)$ on cohomology.  The following comparison theorems describe these restriction maps.

\begin{proposition}\label{weilgaltorsion}
Let $M$ be a torsion $G$-module.  Then there are functorial isomorphisms $H^i(G,M)= H^i(W,M)$ for all $i\geq 0$.
\end{proposition}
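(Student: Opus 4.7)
The plan is to compare the Hochschild--Serre spectral sequences associated to the two rows of the diagram
\[
\xymatrix{
0\ar[r] & I\ar[r]\ar@{=}[d] & W\ar[r]\ar[d] & \frak{w}\ar[r]\ar[d] & 0\\
0\ar[r] & I\ar[r] & G\ar[r] & \frak{g}\ar[r] & 0.
}
\]
For the top row, we already have the short exact sequences (\ref{weilses}) at our disposal, valid for any $W$-module since $\text{scd}(\frak{w})=1$. For the bottom row, the Hochschild--Serre spectral sequence $H^i(\frak{g},H^j(I,M))\Rightarrow H^{i+j}(G,M)$ degenerates in exactly the same fashion because $\text{cd}(\frak{g})=1$ \emph{on torsion modules}; and $M$ being torsion guarantees that each $H^j(I,M)$ is a torsion $\frak{g}$-module, so the hypothesis applies. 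We therefore get a commutative diagram of short exact sequences
\[
\xymatrix{
0\ar[r] & H^1(\frak{g},H^{i-1}(I,M))\ar[r]\ar[d] & H^i(G,M)\ar[r]\ar[d] & H^0(\frak{g},H^i(I,M))\ar[r]\ar[d] & 0\\
0\ar[r] & H^1(\frak{w},H^{i-1}(I,M))\ar[r] & H^i(W,M)\ar[r] & H^0(\frak{w},H^i(I,M))\ar[r] & 0
}
\]
and by the 5-lemma the proposition reduces to the following key claim.

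\emph{Key claim.} For any torsion $\frak{g}$-module $N$, the restriction maps $H^i(\frak{g},N)\rightarrow H^i(\frak{w},N)$ are isomorphisms for $i=0,1$. (Both sides vanish for $i\geq 2$: the left by $\text{cd}(\frak{g})=1$ and the torsion hypothesis, the right by $\text{scd}(\frak{w})=1$.) The case $i=0$ is immediate, since $\frak{w}$ is dense in $\frak{g}$ and $N$ is discrete, forcing $N^{\frak{w}}=N^{\frak{g}}$. The case $i=1$ is the heart of the matter: I would identify both groups with the coinvariants $N/(\sigma-1)N$. The identification on the Weil side is the standard computation of $H^1(\Z,N)$ by 1-cocycles. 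On the Galois side, write $N=\varinjlim_n N^{n\widehat{\Z}}$ and compute $H^1(\frak{g},N)=\varinjlim_n H^1(\Z/n\Z,N^{n\widehat{\Z}})$ via finite cyclic Tate cohomology. Using that $N^{n\widehat{\Z}}$ is torsion and the norm $1+\sigma+\cdots+\sigma^{n-1}$ becomes divisible by arbitrarily high multiples of the orders of elements as $n$ grows, one checks that in the colimit only $N^{n\widehat{\Z}}/(\sigma-1)N^{n\widehat{\Z}}$ survives, and the resulting identification is compatible with the $\frak{w}$-side.

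The main obstacle is precisely this $i=1$ comparison: it is the torsion analogue of the fact that $H^2(\frak{g},\Z)=\QZ$ while $H^2(\frak{w},\Z)=0$, where the ``extra'' $\QZ$ appears in profinite cohomology due to limits. On torsion coefficients this phenomenon collapses, but verifying this requires care with the colimit over open subgroups. Once the key claim is in hand, functoriality is automatic from the spectral sequence comparison, completing the proof.
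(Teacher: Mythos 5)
Your approach is essentially the same as the paper's: both compare the Hochschild--Serre spectral sequences for $0\to I\to W\to\frak{w}\to 0$ and $0\to I\to G\to\frak{g}\to 0$, and reduce to the fact that $\frak{g}$- and $\frak{w}$-cohomology agree on torsion modules. The only difference is that the paper simply cites this key comparison from Serre's \emph{Local Fields} (Chapter XIII, Proposition 1), whereas you supply a direct proof of it via the colimit description of $H^1(\hat{\Z},N)$; your sketch of that argument (norms eventually annihilating torsion elements in the colimit over open subgroups) is correct and is essentially the content of the cited result.
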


\begin{proof} (This proof appears in the unpublished note \cite{lichtenbaum1999} of Lichtenbaum.)  Since Galois cohomology is always torsion, and $\frak{g}$ and $\frak{w}$ cohomology agree for torsion coefficients by Chapter XIII, Proposition 1 of \cite{serre1979}, the restriction maps $H^i(\frak{g},H^j(N,M))\rightarrow H^i(\frak{w},H^j(N,M))$ are isomorphisms for all $i,j\geq 0$.  Thus the map of spectral sequences
$$
\xymatrix{
H^i(\frak{g},H^j(N,M))\ar@{=>}[r]\ar[d] &H^{i+j}(G,M)\ar[d]\\
H^i(\frak{w},H^j(N,M))\ar@{=>}[r] &H^{i+j}(W,M)
}
$$
is an isomorphism on the second page and therefore in the limit.
\end{proof}

\begin{corollary} (Tate Local Duality)
Let $M$ be a finite $G$-module.  The cup-product pairing 
$$%\begin{equation}
H^i(G,M)\otimes H^{2-i}(G,M')\rightarrow \QZ
$$%\end{equation}
is a perfect pairing of finite groups.
\end{corollary}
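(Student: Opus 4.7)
The strategy is to reduce Tate Local Duality to the already-proven Weil-Tate Local Duality (Theorem \ref{weiltld}(iii)) by transporting the Galois cohomology pairing to the Weil cohomology pairing via the comparison isomorphism of Proposition \ref{weilgaltorsion}.

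First, since $M$ is finite it is torsion, and so is $M' = \Hom(M,\mu)$, where $\mu$ may be viewed either in $\bar{K}^\times$ or in $\bar{L}^\times$ since all roots of unity are algebraic over $K$ and therefore lie in $\bar{K}$. Proposition \ref{weilgaltorsion} then supplies natural restriction isomorphisms $H^i(G,M) \simeq H^i(W,M)$ and $H^{2-i}(G,M') \simeq H^{2-i}(W,M')$, as well as an isomorphism $H^2(G,\mu) \simeq H^2(W,\mu) = \QZ$ (the latter identification by Corollary \ref{weilmun}). I would next invoke the naturality of the cup-product with respect to the continuous homomorphism $W \to G$ to obtain a commutative diagram
$$
\xymatrix{
H^i(G,M) \otimes H^{2-i}(G,M') \ar[r]\ar[d] & H^2(G,\mu)\ar[d] \\
H^i(W,M) \otimes H^{2-i}(W,M') \ar[r] & H^2(W,\mu) = \QZ
}
$$
in which all three vertical arrows are isomorphisms. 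Since the bottom pairing is perfect by \ref{weiltld}(iii), the top pairing is perfect as well, and the finiteness of the groups in question follows from \ref{weiltld}(i) transported across the same comparison.

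The only real point to verify carefully, and what I would regard as the main (if essentially bookkeeping) obstacle, is that the right-hand vertical restriction $H^2(G,\mu) \to H^2(W,\mu)$ respects the chosen identifications of both sides with $\QZ$ — i.e.\ that the classical Galois invariant and the Weil invariant of Corollary \ref{weilmun} are intertwined by restriction. For this I would compare the Kummer sequence computations at each level $n$: the classical identification $H^2(G,\mu_n) = \Br(K)[n]$ and the Weil identification $H^2(W,\mu_n) = \Z/n\Z$ are both realized as boundary maps in the long exact sequences coming from $0 \to \mu_n \to \bar{K}^\times \to \bar{K}^\times \to 0$ and $0 \to \mu_n \to \bar{L}^\times \to \bar{L}^\times \to 0$ respectively. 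Naturality of these boundaries under the map $W \to G$, together with the compatibility of $H^1(G,\bar{K}^\times) = 0$ and $H^1(W,\bar{L}^\times) = \Z$ of Proposition \ref{weilecross} (which in turn comes from the valuation $\bar{L}^\times \to \Q$), makes the required square commute on the nose. Once this compatibility is in hand, the perfectness assertion and the finiteness of the cohomology groups are immediate consequences of \ref{weiltld}.
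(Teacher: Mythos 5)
Your proposal is correct and matches the paper's approach, which is simply to cite Proposition \ref{weilgaltorsion} (comparison for torsion modules) together with Theorem \ref{weiltld}(iii). One small remark: the compatibility check you flag as the "main obstacle" — that the restriction $H^2(G,\mu)\to H^2(W,\mu)$ intertwines the two identifications with $\QZ$ — is not actually needed to conclude perfectness, since perfectness of a pairing is insensitive to post-composition by an automorphism of $\QZ$; it would only matter if you wanted to match the classical normalization of the invariant map.
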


\begin{proof}
This follows from (\ref{weiltld}), and (\ref{weilgaltorsion}) applied to the finite module $M$.
\end{proof}

\begin{theorem}\label{weilgalcomp}
Let $M$ be a discrete $G$-module.  Then there are functorial isomorphisms
\begin{itemize}

\item[(i)] $H^0(G,M)= H^0(W,M)$ and

\item[(ii)] $H^1(G,M)= H^1(W,M)_{\text{tors}},$

\item[(iii)] there is a short exact sequence
$$0\rightarrow H^1(W,M)\otimes \QZ\rightarrow H^2(G,M)\rightarrow H^2(W,M)\rightarrow 0,$$

\item[(iv)] and there are isomorphisms $H^i(W,M)=H^i(G,M)$ for all $i\geq 3$.
\end{itemize}
\end{theorem}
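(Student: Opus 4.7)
The plan is to compare the Hochschild--Serre spectral sequences $E_2^{p, q}(W) = H^p(\frak{w}, H^q(I, M)) \Rightarrow H^{p+q}(W, M)$ and $E_2^{p, q}(G) = H^p(\frak{g}, H^q(I, M)) \Rightarrow H^{p+q}(G, M)$ associated to the extensions $0 \to I \to W \to \frak{w} \to 0$ and $0 \to I \to G \to \frak{g} \to 0$. The inclusion $W \hookrightarrow G$ induces a map between them whose effect on $E_2$-pages is the restriction $H^p(\frak{g}, -) \to H^p(\frak{w}, -)$. Since $\text{scd}(\frak{w}) = 1$, the $W$-spectral sequence degenerates into the short exact sequences of (\ref{weilses}), while the $G$-spectral sequence has an additional column at $p = 2$.

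The proof hinges on a comparison lemma for any discrete $\frak{g}$-module $A$: (a) $H^0(\frak{g}, A) = H^0(\frak{w}, A)$, immediate from discreteness; (b) the restriction $H^1(\frak{g}, A) \to H^1(\frak{w}, A)$ is injective with image the torsion subgroup, since any continuous crossed homomorphism $\hat{\Z} \to A$ factors through a finite cyclic quotient; (c) there is a natural isomorphism $H^2(\frak{g}, A) \cong H^1(\frak{w}, A) \otimes \QZ$; (d) $H^i(\frak{g}, A) = 0$ for $i \geq 3$ and $H^i(\frak{w}, A) = 0$ for $i \geq 2$. Items (c) and (d) follow by applying $R\Gamma_{\frak{g}}$ to the four-term exact sequence $0 \to A_{\text{tors}} \to A \to A \otimes \Q \to A \otimes \QZ \to 0$ obtained by tensoring $0 \to \Z \to \Q \to \QZ \to 0$ with $A$: the term $A \otimes \Q$ has vanishing positive-degree $\frak{g}$-cohomology because continuous cochains from the profinite $\frak{g}$ to the discrete $\Q$-vector space $A \otimes \Q$ must vanish. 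This gives $H^2(\frak{g}, A) \cong H^1(\frak{g}, A \otimes \QZ)$, which equals $H^1(\frak{w}, A \otimes \QZ)$ by (\ref{weilgaltorsion}) since $A \otimes \QZ$ is torsion, and finally $H^1(\frak{w}, A \otimes \QZ) \cong H^1(\frak{w}, A) \otimes \QZ$ because coinvariants commute with tensoring by the trivially-acted $\QZ$.

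Given the lemma, part (i) is immediate. For (ii), compare the five-term exact sequences of the two spectral sequences; using (b) and the fact that $H^1(I, M)$ is torsion (a standard property of positive-degree cohomology of a profinite group on a discrete module), one identifies $H^1(G, M)$ with $H^1(W, M)_{\text{tors}}$. Part (iv) follows because, for $i \geq 3$, the group $H^{i-2}(I, M)$ is torsion, so the new column contribution $E_2^{2, i-2}(G) = H^1(\frak{w}, H^{i-2}(I, M)) \otimes \QZ$ vanishes (torsion tensored with $\QZ$ is zero), and the surviving columns $p = 0, 1$ agree in both spectral sequences via the lemma. For (iii), the extra piece of $H^2(G, M)$ relative to $H^2(W, M)$ is $E_{\infty}^{2, 0}(G) = (H^1(\frak{w}, M^I) \otimes \QZ)/\text{im}(d_2)$, where $d_2 : E_2^{0, 1}(G) = H^1(I, M)^{\frak{w}} \to E_2^{2, 0}(G) = H^1(\frak{w}, M^I) \otimes \QZ$; identifying this $d_2$ with the natural inclusion arising from the Tor long exact sequence applied to $0 \to H^1(\frak{w}, M^I) \to H^1(W, M) \to H^1(I, M)^{\frak{w}} \to 0$ yields the kernel $(H^1(\frak{w}, M^I) \otimes \QZ)/H^1(I, M)^{\frak{w}} = H^1(W, M) \otimes \QZ$.

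The main obstacle is precisely the identification of $d_2$ in (iii) with the Tor-inclusion, which is what replaces the naive $H^1(\frak{w}, M^I) \otimes \QZ$ by $H^1(W, M) \otimes \QZ$ in the statement. One expects this to follow from naturality of both maps with respect to the restriction $W \hookrightarrow G$ together with the vanishing of the corresponding differential in the degenerate $W$-spectral sequence, but making the cocycle-level comparison precise is the delicate step.
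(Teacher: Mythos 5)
Your approach is genuinely different from the paper's. The paper first reduces to the case where $M$ is torsion-free (using Proposition \ref{weilgaltorsion}, which compares the two spectral sequences only in the torsion situation, where the $p=2$ column contributes nothing), and then for torsion-free $M$ runs the multiplication-by-$n$ Kummer long exact sequences for $G$ and $W$ side by side, passing to the direct limit over $n$. The point of that bootstrapping is precisely to avoid ever touching the transgression $d_2$: the comparison is reduced entirely to the mod-$n$ case, where the two theories already agree. Your proposal instead attempts a direct comparison of the $E_2$-pages of the two Hochschild--Serre spectral sequences. This is more conceptual but exposes a step that the paper's route never confronts.

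That step is a genuine gap. Parts (i) and (iv) of your argument are sound, but both (ii) and (iii) depend on the unproved identification of $d_2^G \colon E_2^{0,1}(G) = H^1(I,M)^{\frak{g}} \to E_2^{2,0}(G) \cong H^1(\frak{w},M^I)\otimes\QZ$ with the connecting map $\delta$ in the $\Tor(-,\QZ)$ long exact sequence of the degenerate $W$-sequence $0\to H^1(\frak{w},M^I)\to H^1(W,M)\to H^1(I,M)^{\frak{w}}\to 0$. For (iii) you need $\mathrm{coker}(d_2^G)=\mathrm{coker}(\delta)=H^1(W,M)\otimes\QZ$; for (ii), comparing five-term sequences only gives $0\to H^1(\frak{w},M^I)_{\text{tors}}\to H^1(G,M)\to\ker(d_2^G)\to 0$, and you additionally need $\ker(d_2^G)=\ker(\delta)$ (the image of $H^1(W,M)_{\text{tors}}$ in $H^1(I,M)^{\frak{w}}$), which is not automatic from functoriality of the restriction map alone. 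You flag this as the delicate step, but no argument is offered, and the rest of the proof depends on it. A plausible route to closing the gap is to factor both $R\Gamma_G$ and $R\Gamma_W$ through $R\Gamma_I$ and then reduce to a general comparison of $R\Gamma_{\frak{g}}$ and $R\Gamma_{\frak{w}}$ on a bounded complex of $\frak{g}$-modules, which would turn the desired identity into a statement about edge maps rather than a cocycle computation; but as written this remains a hole, and the paper's Kummer-sequence reduction is the cleaner way around it.

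A minor point on your lemma (c): the reason $H^i(\frak{g}, A\otimes\Q)=0$ for $i\geq 1$ is not that continuous cochains from the profinite group $\frak{g}$ to the discrete $\Q$-vector space $A\otimes\Q$ vanish (they need not; they merely have finite image). The correct reason is that $H^i(\frak{g},-)$ is torsion in positive degrees for any discrete module, while $H^i(\frak{g}, A\otimes\Q)$ is also a $\Q$-vector space, and a torsion $\Q$-vector space is zero. The conclusion and the resulting isomorphism $H^2(\frak{g},A)\cong H^1(\frak{w},A)\otimes\QZ$ are correct.
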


\begin{proof}
By considering the exact sequence $0\rightarrow M_{\text{tors}}\rightarrow M\rightarrow M/M_{\text{tors}}\rightarrow 0$ and using the above proposition, we may assume $M$ is torsion-free.  Since $W$ is dense in $G$ it is clear that $H^0(G,M)= H^0(W,M)$.  Kummer sequences give rise to a diagram
$$
\xymatrix{
0\ar[r] &H^0(G,M)/n\ar[r]\ar[d]^-\wr &H^0(G,M/nM)\ar[r]^-\delta\ar[d]^-\wr &H^1(G,M)[n]\ar[r]\ar[d] &0\\
0\ar[r] &H^0(W,M)/n\ar[r] &H^0(W,M/nM)\ar[r]^-\delta &H^1(W,M)[n]\ar[r] &0.
}
$$
Since $H^1(G,M)$ is all torsion, passing to the limit proves that $H^1(G,M)= H^1(W,M)_{\text{tors}}$.

For part (iii), consider the diagram
$$
\xymatrix{
0\ar[r] &H^1(G,M)/n\ar[r]\ar[d]^-\beta &H^1(G,M/n)\ar[r]^-\delta\ar[d]^-\wr &H^2(G,M)[n]\ar[d]^-\kappa\ar[r] &0\\
0\ar[r] &H^1(W,M)/n\ar[r] &H^1(W,M/n)\ar[r]^-\delta &H^2(W,M)[n]\ar[r] &0.
}
$$
From the Snake Lemma it is clear that $\kappa$ is surjective and that $\ker(\kappa)=\text{coker}(\beta)$.  Passing to the limit over all $n$, we have
\begin{eqnarray*}
\varinjlim_n\text{coker}(H^1(G,M)/n\rightarrow H^1(W,M)/n) &=& \text{coker}(H^1(G,M)\otimes\QZ\rightarrow H^1(W,M)\otimes\QZ)\\
 &=& H^1(W,M)\otimes\QZ
\end{eqnarray*}
because $H^1(G,M)\otimes\QZ=0$.  The existence of our exact sequence is now clear.

For $i= 3$, we have the diagram
$$
\xymatrix{
0\ar[r] &H^2(G,M)/n\ar[r]\ar[d] &H^2(G,M/n)\ar[r]^-\delta\ar[d]^-\wr &H^3(G,M)[n]\ar[d]\ar[r] &0\\
0\ar[r] &H^2(W,M)/n\ar[r] &H^2(W,M/n)\ar[r]^-\delta &H^3(W,M)[n]\ar[r] &0.
}
$$
and it follows from the exact sequence $0\rightarrow H^1(W,M)\otimes \QZ\rightarrow H^2(G,M)\rightarrow H^2(W,M)\rightarrow 0$ that the left arrow is an isomorphism.  Therefore $H^3(G,M)[n]=H^3(W,M)[n]$, and passing to the limit over all $n$ gives $H^3(G,M)=H^3(W,M)$.  The result for $i\geq 4$ follows immediately by induction on $i$ and considering Kummer sequences.
\end{proof}

The above theorem implies that the groups $H^i(W,M)$ determine the groups $H^i(G,M)$ up to isomorphism, since $H^1(W,M)\otimes\QZ$ is an injective abelian group.  The converse fails: take for example $M=\Q$ with trivial action.  Then $H^i(G,\Q)=0$ for $i\geq 1$, but $H^1(W,\Q)=\Hom(\frak{w},\Q)=\Q$.  Thus the groups $H^i(W,M)$ contain more information than their Galois counterparts.

\begin{corollary}
The strict cohomological dimension of $G$ is $2$.
\end{corollary}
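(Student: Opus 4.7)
The plan is to establish both inequalities $\text{scd}(G) \leq 2$ and $\text{scd}(G) \geq 2$ separately. For the lower bound, I simply note that Weil-Tate Local Duality (Theorem \ref{weiltld}(iii)) gives $H^{2}(W,\mu_{n}) \cong \Z/n\Z \neq 0$, and Proposition \ref{weilgaltorsion} identifies this with $H^{2}(G,\mu_{n})$ since $\mu_n$ is a torsion module. Hence $\text{scd}(G) \geq 2$.

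The substantive claim is the upper bound: $H^{i}(G,M) = 0$ for every discrete $G$-module $M$ and every $i \geq 3$. The first step is a standard reduction to $M$ finitely generated as an abelian group. Because $M$ is discrete, the $G$-orbit of any single element is finite (the stabilizer being open), so $M$ is the filtered colimit of its $G$-submodules which are finitely generated as abelian groups. Since Galois cohomology commutes with filtered colimits, it suffices to treat such finitely generated $M$.

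The second step applies the comparison Theorem \ref{weilgalcomp}(iv), which identifies $H^{i}(G,M) \cong H^{i}(W,M)$ for all $i \geq 3$ and all discrete $G$-modules $M$. The problem thus reduces to showing $H^{i}(W,M) = 0$ for $i \geq 3$ when $M$ is finitely generated as an abelian group. For $i \geq 4$, this holds for every $W$-module without any finiteness hypothesis, as observed just before Section 3.1, using the short exact sequence \eqref{weilses} together with $\text{scd}(\frak{w}) = 1$ and $\text{scd}(I) = 2$. For $i = 3$, the vanishing is exactly the content of Theorem \ref{h3vanish}, whose hypothesis is precisely that $M$ be finitely generated as an abelian group.

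The main obstacle, which is already handled by earlier results in the paper, is really Theorem \ref{h3vanish}: without it, one could not conclude anything about $H^{3}(G,M)$ for non-torsion $M$ (recall that $\text{cd}(G) = 2$ only controls torsion modules). Everything else in the present corollary is a matter of assembling the pieces: a direct-limit reduction, the comparison isomorphism for $i \geq 3$, and the general $H^{\geq 4}$ vanishing for $W$.
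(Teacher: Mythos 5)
Your proof is correct and follows essentially the same approach as the paper: reduce to $M$ finitely generated as an abelian group via filtered colimits, invoke the comparison isomorphism $H^i(G,M)\cong H^i(W,M)$ for $i\geq 3$ from Theorem \ref{weilgalcomp}(iv), and then conclude $H^3$ vanishes from Theorem \ref{h3vanish}. You are somewhat more explicit than the paper, which only spells out the $H^3$ vanishing; you also verify the lower bound $\text{scd}(G)\geq 2$ via $H^2(G,\mu_n)\neq 0$ and explicitly note the $H^{\geq 4}$ vanishing via $\text{scd}(\frak{w})=1$ and $\text{scd}(I)=2$ (which the paper treats as background, established in the discussion preceding \S 3.1). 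These are harmless and arguably welcome additions, but the substance of the argument is identical.
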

\begin{proof}
Let $M$ be a $G$-module.  Since the orbit under $G$ of any $m\in M$ is finite, we can write $M$ as a direct limit of $G$-modules $M_n$ which are finitely generated as abelian groups.  Then by the above comparison theorem and (\ref{h3vanish}), we have $H^3(G,M)=\varinjlim_n H^3(W,M_n)=0$
\end{proof}

Let $\mathcal{A}/K$ be a commutative algebraic group scheme defined over $K$.  Then $\mathcal{A}(\bar{K})$ is naturally a $G$-module, and $\mathcal{A}(\bar{L})$ is naturally a $W$-module.  The inclusion map $\mathcal{A}(\bar{K})\rightarrow \mathcal{A}(\bar{L})$ induces restriction maps $H^i(G,\mathcal{A}(\bar{K}))\rightarrow H^i(W,\mathcal{A}(\bar{L}))$.  We have the following comparison theorem:

\begin{theorem}\label{weilgalalgcomp}
Let $\mathcal{A}$ be a connected commutative algebraic group scheme over $K$.  There are functorial isomorphisms:

\begin{itemize}

\item[(i)] $H^0(G,\mathcal{A}(\bar{K}))= H^0(W,\mathcal{A}(\bar{L}))$ and

\item[(ii)] $H^1(G,\mathcal{A}(\bar{K}))= H^1(W,\mathcal{A}(\bar{L}))_{\text{tors}},$

\item[(iii)] there is a short exact sequence
$$0\rightarrow H^1(W,\mathcal{A}(\bar{L}))\otimes \QZ\rightarrow H^2(G,\mathcal{A}(\bar{K}))\rightarrow H^2(W,\mathcal{A}(\bar{L}))\rightarrow 0,$$

\item[(iv)] and the higher cohomology groups $H^i(G,\mathcal{A}(\bar{K}))$ and $H^i(W,\mathcal{A}(\bar{L}))$ vanish for $i\geq 3$.
\end{itemize}
\end{theorem}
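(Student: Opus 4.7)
The plan is to closely follow the proof of Theorem \ref{weilgalcomp}, with the multiplication-by-$n$ sequence for an abstract discrete module replaced by the Kummer sequences
\begin{equation*}
0 \to \mathcal{A}[n] \to \mathcal{A}(\bar{K}) \stackrel{n}{\to} \mathcal{A}(\bar{K}) \to 0, \qquad 0 \to \mathcal{A}[n] \to \mathcal{A}(\bar{L}) \stackrel{n}{\to} \mathcal{A}(\bar{L}) \to 0,
\end{equation*}
which are exact because $\mathcal{A}$ is connected and $\text{char}(K)=0$, so $[n]$ is \'etale and surjective on points over any algebraically closed field. Crucially, $\mathcal{A}[n]$ is a finite \'etale $K$-group scheme whose $\bar{K}$- and $\bar{L}$-points coincide, so Proposition \ref{weilgaltorsion} supplies canonical isomorphisms $H^i(G, \mathcal{A}[n]) \cong H^i(W, \mathcal{A}[n])$ for all $i$.

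Part (i) is immediate from $\bar{K}^G = K = \bar{L}^W$. For (ii), the map of Kummer short exact sequences
\begin{equation*}
\xymatrix{
0 \ar[r] & \mathcal{A}(K)/n \ar[r] \ar[d]^-\wr & H^1(G, \mathcal{A}[n]) \ar[r] \ar[d]^-\wr & H^1(G, \mathcal{A}(\bar{K}))[n] \ar[r] \ar[d] & 0 \\
0 \ar[r] & \mathcal{A}(K)/n \ar[r] & H^1(W, \mathcal{A}[n]) \ar[r] & H^1(W, \mathcal{A}(\bar{L}))[n] \ar[r] & 0
}
\end{equation*}
forces the right vertical arrow to be an isomorphism by the Five Lemma; taking the direct limit over $n$ and invoking that $H^1(G, \mathcal{A}(\bar{K}))$ is torsion (standard for profinite cohomology of discrete modules, since $\mathcal{A}(\bar{K})$ has open stabilizers) yields (ii). Part (iii) follows from the analogous diagram one degree higher, combined with a Snake Lemma computation that identifies $\text{coker}(H^1(G,\mathcal{A}(\bar{K}))/n \to H^1(W,\mathcal{A}(\bar{L}))/n)$ with $H^1(W, \mathcal{A}(\bar{L})) \otimes \QZ$---this uses that $H^1(G, \mathcal{A}(\bar{K}))$ is torsion and that the quotient $H^1(W,\mathcal{A}(\bar{L}))/H^1(G,\mathcal{A}(\bar{K}))$ is therefore torsion-free---after which passage to the limit over $n$ delivers the desired short exact sequence.

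For (iv), the vanishing of $H^i(G, \mathcal{A}(\bar{K}))$ for $i \geq 3$ follows from the corollary $\text{scd}(G) = 2$ established just after (\ref{weilgalcomp}). For the Weil-group side, the Kummer sequence combined with $\text{cd}(W) = 2$ from Theorem \ref{weiltld} shows $H^i(W, \mathcal{A}(\bar{L}))[n] = 0$ for $i \geq 3$ and all $n$; torsion-ness of these groups---coming from the spectral sequence \ref{weilses} together with $\text{scd}(I) = 2$ and the fact that $I$-cohomology of a discrete $I$-module is torsion in positive degree---then completes the vanishing.

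The main obstacle I anticipate is the careful bookkeeping of torsion subgroups needed to pass from statements modulo $n$ to statements on the nose. In particular, one must establish that $H^1(G, \mathcal{A}(\bar{K}))$, $H^2(G, \mathcal{A}(\bar{K}))$, and $H^i(W, \mathcal{A}(\bar{L}))$ for $i \geq 3$ are all torsion, so that the direct limits computed from the Kummer diagrams capture the entire cohomology groups rather than merely their torsion subgroups. These torsion-ness statements are consequences of general facts about profinite cohomology applied to $G$ and to $I$, combined with the spectral sequence (\ref{weilses}).
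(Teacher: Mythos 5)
Your proposal is correct and takes essentially the same approach as the paper: reduce via Kummer sequences for multiplication-by-$n$ to the finite-module case handled by Proposition~(\ref{weilgaltorsion}), then run the same limit argument as in Theorem~(\ref{weilgalcomp}). The one cosmetic difference is how surjectivity of $[n]$ on $\mathcal{A}(\bar{L})$ is justified: the paper invokes Chevalley's structure theorem (extension of an abelian variety by a product of a torus and a vector group) and checks each piece, while you observe directly that in characteristic zero $[n]$ is \'etale (invertible on Lie algebras) and hence surjective by connectedness; both arguments are valid and the rest is identical.
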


\begin{proof}
First we will show that the multiplication-by-$n$ maps $\mathcal{A}\rightarrow \mathcal{A}$ are all surjective.  Since $\mathcal{A}$ is connected, we have a short exact sequence
$$0\rightarrow H\rightarrow \mathcal{A}\rightarrow B\rightarrow 0$$
of algebraic groups, where $H$ is linear and $B$ is an abelian variety (see Theorem 1.1 of \cite{conrad2002}).  As $H$ is commutative, it is the product of a torus and a commutative unipotent group.  It follows that the multiplication-by-$n$ maps on $H$ and $B$ are surjective, hence the same is true of $\mathcal{A}$ by the Five Lemma.

The $n$-torsion of $\mathcal{A}(\bar{L})$ is contained in $\mathcal{A}(\bar{K})$, hence there is a map of Kummer sequences
$$
\xymatrix{
0\ar[r] &\mathcal{A}[n]\ar[r]\ar@{=}[d] &\mathcal{A}(\bar{K})\ar[r]^-n\ar[d] &\mathcal{A}(\bar{K})\ar[r]\ar[d] &0\\
0\ar[r] &\mathcal{A}[n]\ar[r] &\mathcal{A}(\bar{L})\ar[r]^-n &\mathcal{A}(\bar{L})\ar[r] &0.
}
$$
The rest of the proof is exactly as in (\ref{weilgalcomp}).
\end{proof}

\begin{corollary}
There is a natural isomorphism $H^1(W,\bar{L}^\times)\otimes \QZ= \text{Br}(K)$, and thus $\text{Br}(K)=\QZ$.
\end{corollary}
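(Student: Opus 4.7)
The plan is to deduce this as a direct specialization of Theorem \ref{weilgalalgcomp}. Take $\mathcal{A} = \G_m$, which is a connected commutative algebraic group over $K$. Then $\mathcal{A}(\Kbar) = \Kbar^\times$ and $\mathcal{A}(\bar L) = \bar L^\times$, and by definition $\Br(K) = H^2(G,\Kbar^\times)$. So part (iii) of Theorem \ref{weilgalalgcomp} yields the short exact sequence
$$0 \to H^1(W,\bar L^\times)\otimes \QZ \to \Br(K) \to H^2(W,\bar L^\times)\to 0.$$

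The next step is to plug in the computation of Proposition \ref{weilecross}: $H^2(W,\bar L^\times) = 0$ and $H^1(W,\bar L^\times) = \Z$. The vanishing of $H^2(W,\bar L^\times)$ collapses the above sequence to an isomorphism $H^1(W,\bar L^\times)\otimes \QZ \xrightarrow{\sim} \Br(K)$, giving the first claim of the corollary. Combined with $H^1(W,\bar L^\times) = \Z$, the identification $\Z\otimes\QZ = \QZ$ then yields $\Br(K) = \QZ$.

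There is no real obstacle: both inputs — the comparison theorem and the calculation of $H^*(W,\bar L^\times)$ — have already been established, and the corollary is essentially a matter of assembling them. The only subtlety worth flagging is that one uses the comparison sequence in a case where its rightmost term vanishes, which is precisely what makes the tensor-product factor on the left compute $\Br(K)$ rather than merely inject into it.
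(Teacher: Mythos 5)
Your proof is correct and follows exactly the paper's argument: specialize part (iii) of Theorem (\ref{weilgalalgcomp}) to $\mathcal{A}=\G_m$, then use Proposition (\ref{weilecross}) to kill $H^2(W,\bar{L}^\times)$ and identify $H^1(W,\bar{L}^\times)$ with $\Z$. Nothing to add.
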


\begin{proof}
The isomorphism is the map in the short exact sequence of (\ref{weilgalalgcomp}), and the second statement follows from (\ref{weilecross}).
\end{proof}

From now on, we denote the groups $H^i(G,\mathcal{A}(\bar{K}))$ and $H^i(W,\mathcal{A}(\bar{L}))$ by $H^i(G,\mathcal{A})$ and $H^i(W,\mathcal{A})$, respectively.

%%%%%%%%%%%%%%%%%%%%%%%%%%%%%%%%%%%%%%%%%%%
\subsection{The Reciprocity Isomorphism}
Recall the main theorem (\ref{fgweilduality}) of the previous section, which states that for a $W$-module $M$ which is finitely generated as an abelian group, cup-product gives a natural isomorphism $R\Gamma_W(M^D)\stackrel{\sim}{\rightarrow} R\Hom(R\Gamma_W(M),\Z[-1])$ in the derived category of abelian groups.  Suppose now that $T/K$ is a torus with character group $M=\underline{\Hom}(T,\G_m)$.  Then $T(\bar{L})$ can be identified with $M^D$ as a $W$-module, and the duality theorem reads
$$%\begin{equation}
R\Gamma_W(T)\stackrel{\sim}{\rightarrow} R\Hom(R\Gamma_W(M),\Z[-1]).
$$%\end{equation}
We can use our duality theorem to prove the following:

\begin{proposition}\label{h2torusvanish}
Let $T/K$ be a torus.  Then $H^2(W,T)=0$.
\end{proposition}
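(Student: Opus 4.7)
The plan is to apply Theorem \ref{fgweilduality} directly. Let $M = \underline{\Hom}(T, \G_m)$ be the character group of $T$, which is finitely generated and free as an abelian group (and carries a natural $W$-action via $W \to G$, factoring through a finite quotient). As observed in the preamble to this proposition, there is a $W$-equivariant identification $T(\bar{L}) \cong \Hom(M, \bar{L}^\times) = M^D$, so
$$
R\Gamma_W(T) \simeq R\Hom(R\Gamma_W(M), \Z[-1]).
$$

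Taking $H^2$ of both sides, I would obtain $H^2(W, T) \cong \Ext^2(R\Gamma_W(M), \Z[-1])$. Now I apply Proposition \ref{rhomz} with $C = R\Gamma_W(M)$, $n = 1$, and $i = 2$. Since $M$ is concentrated in degree $0$, $R\Gamma_W(M)$ has no cohomology in negative degrees, so $H^{-1}(C) = 0$. The short exact sequence of Proposition \ref{rhomz} therefore collapses to an isomorphism
$$
\Ext^2(R\Gamma_W(M), \Z[-1]) \cong \Ext(H^0(W, M), \Z) = \Ext(M^W, \Z).
$$

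Finally, since $M$ is a free abelian group of finite rank, its subgroup $M^W$ is also free of finite rank, and hence $\Ext(M^W, \Z) = 0$. This gives $H^2(W, T) = 0$, completing the proof. There is no real obstacle here beyond verifying the identification $T(\bar L) = M^D$ as $W$-modules and checking the degree bookkeeping in Proposition \ref{rhomz}; the argument is essentially a one-line consequence of the main duality theorem together with the fact that $\Ext(-, \Z)$ vanishes on free finitely generated abelian groups.
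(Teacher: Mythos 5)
Your proof is correct and follows exactly the paper's own argument: apply Theorem \ref{fgweilduality} to identify $H^2(W,T)$ with $\Ext^2(R\Gamma_W(M),\Z[-1])$, then use Proposition \ref{rhomz} to reduce this to $\Ext(H^0(W,M),\Z)$, which vanishes since $M^W$ is a subgroup of a finitely generated free abelian group. The degree bookkeeping in Proposition \ref{rhomz} checks out (with $n=1$, $i=2$, the $\Hom$ term involves $H^{-1}=0$), so there is nothing to add.
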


\begin{proof}
By the duality theorem, this group is isomorphic to $\Ext^2(R\Gamma_W(M),\Z[-1])$.  By (\ref{rhomz}), there is an isomorphism $\Ext(H^0(W,M),\Z)\stackrel{\sim}{\rightarrow} \Ext^2(R\Gamma_W(M),\Z[-1]),$ but the former group vanishes because $H^0(W,M)\subseteq M$ is finitely generated and free.
\end{proof}

\begin{corollary}\label{weilgaltorcomp}
Let $T/K$ be a torus.  Then there are natural isomorphisms
\begin{itemize}
\item[(i)] $H^0(G,T)=H^0(W,T)$
\item[(ii)] $H^1(G,T)=H^1(W,T)_{\text{tors}}$
\item[(iii)] $H^2(G,T)= H^1(W,T)\otimes\QZ$
\end{itemize}
\end{corollary}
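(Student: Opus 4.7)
The plan is to obtain all three isomorphisms as a direct consequence of the comparison theorem (\ref{weilgalalgcomp}) applied to the case $\mathcal{A} = T$, combined with the vanishing result (\ref{h2torusvanish}).

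First I would observe that a torus $T/K$ is by definition a connected commutative algebraic group scheme over $K$, so it falls within the hypotheses of Theorem (\ref{weilgalalgcomp}). Parts (i) and (ii) of the corollary are then immediate: they are literally parts (i) and (ii) of (\ref{weilgalalgcomp}) with $\mathcal{A} = T$.

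For part (iii), I would invoke part (iii) of (\ref{weilgalalgcomp}), which yields a short exact sequence
$$0 \to H^1(W,T)\otimes \QZ \to H^2(G,T) \to H^2(W,T) \to 0.$$
Here the key input is Proposition (\ref{h2torusvanish}), which asserts $H^2(W,T)=0$. Substituting this vanishing into the sequence immediately identifies the middle term with the left term, giving the desired isomorphism $H^2(G,T) = H^1(W,T)\otimes \QZ$.

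There is essentially no obstacle here, since all substantive work has been done in the earlier results: the derived-category duality (\ref{fgweilduality}) applied to the character lattice $M$ of $T$ (which realizes $T(\bar{L}) = M^D$) was what powered (\ref{h2torusvanish}), and the Kummer-sequence comparison arguments used to prove (\ref{weilgalalgcomp}) already handled the passage between $G$- and $W$-cohomology for connected commutative algebraic groups. The corollary is just the specialization of these two results to tori.
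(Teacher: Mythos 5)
Your proposal is correct and matches the paper's own proof exactly: the paper likewise cites Theorem (\ref{weilgalalgcomp}) for parts (i) and (ii), and combines part (iii) of that theorem with the vanishing $H^2(W,T)=0$ from Proposition (\ref{h2torusvanish}) to obtain part (iii).
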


\begin{proof}
This follows immediately from (\ref{weilgalalgcomp}) and (\ref{h2torusvanish}).
\end{proof}

\begin{corollary} (Tate-Nakayama Duality) 
Let $T/K$ be a torus with character group $M$.  Then the map
$$%\begin{equation}\label{tatenakayama}
H^i(G,T)\rightarrow H^{2-i}(G,M)^*
$$%\end{equation}
induced by cup-product is an isomorphism for $i=1,2$, and an isomorphism for $i=0$ upon passing to the profinite completion of the left-hand side.
\end{corollary}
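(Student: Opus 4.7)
The plan is to deduce Tate--Nakayama Duality from Theorem (\ref{fgweilduality}) via the comparison results of the preceding subsection. Write $M = \underline{\Hom}(T,\G_m)$ for the character lattice of $T$, a $W$-module which is finitely generated and free as an abelian group, so that $T(\bar L) = \Hom(M,\bar L^{\times}) = M^{D}$ as $W$-modules. Applying Theorem (\ref{fgweilduality}) to $M$ gives an isomorphism $R\Gamma_W(T) \xrightarrow{\sim} R\Hom(R\Gamma_W(M),\Z[-1])$; combined with Proposition (\ref{rhomz}), this produces for each $i \geq 0$ a short exact sequence
$$0 \rightarrow \Ext(H^{2-i}(W,M),\Z) \rightarrow H^{i}(W,T) \rightarrow \Hom(H^{1-i}(W,M),\Z) \rightarrow 0.$$
Using (\ref{weilses}) together with $\text{cd}(I)=1$, I would check that $H^{2}(W,M) = H^{1}(\frak{w}, H^{1}(I,M))$ is finite (since $H^{1}(I,M)$ is finite as $M$ is eventually $I$-trivial) and that $H^{1}(W,M)$ is finitely generated; hence $\Ext(H^{j}(W,M),\Z) = H^{j}(W,M)_{\text{tors}}^{*}$ for $j=1,2$, while $\Ext(H^{0}(W,M),\Z)=0$ since $H^{0}(W,M) \subset M$ is free (this also recovers (\ref{h2torusvanish})).

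For $i=2$, Corollary (\ref{weilgaltorcomp}) identifies $H^{2}(G,T)$ with $H^{1}(W,T)\otimes \QZ$. The finite group $\Ext(H^{1}(W,M),\Z)$ dies upon tensoring with the divisible group $\QZ$, leaving
$$H^{2}(G,T) \xrightarrow{\sim} \Hom(H^{0}(W,M),\Z)\otimes \QZ = H^{0}(W,M)^{*} = H^{0}(G,M)^{*}.$$
For $i=1$, $H^{1}(G,T) = H^{1}(W,T)_{\text{tors}}$ by (\ref{weilgaltorcomp}); because $\Hom(H^{0}(W,M),\Z)$ is torsion-free, this equals $\Ext(H^{1}(W,M),\Z) = H^{1}(W,M)_{\text{tors}}^{*} = H^{1}(G,M)^{*}$, where the final equality is (\ref{weilgalcomp}).

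The case $i=0$ is the most subtle and is where I expect the main work to lie. The sequence above exhibits $H^{0}(W,T) = H^{0}(G,T)$ as a (necessarily split) extension of the finitely generated free group $\Hom(H^{1}(W,M),\Z)$ by the finite group $H^{2}(W,M)^{*}$. Taking profinite completion gives
$$0 \rightarrow H^{2}(W,M)^{*} \rightarrow H^{0}(W,T)^{\wedge} \rightarrow \Hom(H^{1}(W,M),\hat{\Z}) \rightarrow 0,$$
while dualizing the comparison sequence
$$0 \rightarrow H^{1}(W,M)\otimes \QZ \rightarrow H^{2}(G,M) \rightarrow H^{2}(W,M) \rightarrow 0$$
of (\ref{weilgalcomp})---using the adjunction $(A\otimes\QZ)^{*} = \Hom(A,\hat{\Z})$ and injectivity of $\QZ$---produces an identical sequence with $H^{2}(G,M)^{*}$ in place of $H^{0}(W,T)^{\wedge}$. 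Matching these yields the isomorphism. The genuine obstacle is the bookkeeping required to verify that every identification above is induced by the Galois cup-product pairing and not merely by abstract isomorphism of abelian groups; this rests on compatibility of cup-product with restriction $H^{*}(G,-) \to H^{*}(W,-)$, with the Bocksteins from Kummer sequences, and with the Weil duality pairing of (\ref{fgweilduality}), and is particularly delicate in the $i=0$ case where profinite completion must be commuted with $\Hom(-,\QZ)$-duality in a cup-product-compatible way.
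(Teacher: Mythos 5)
Your proposal is correct and follows essentially the same route as the paper: write $T(\bar L) = M^D$, apply Theorem (\ref{fgweilduality}) together with Proposition (\ref{rhomz}) to get the short exact sequences relating $H^i(W,T)$ to $\Ext$ and $\Hom$ of the $H^j(W,M)$, and then use the comparison isomorphisms of (\ref{weilgalcomp}) and (\ref{weilgaltorcomp}) to translate into Galois cohomology; the $i=2$ and $i=1$ cases drop out by tensoring with $\QZ$ and taking torsion respectively, and the $i=0$ case is handled by matching the completed short exact sequence against the dual of the comparison sequence for $H^2(G,M)$. The paper presents the $i=0$ step as a commutative ladder diagram rather than by explicitly completing the top row, but the content is the same.
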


\begin{proof}
Using (\ref{weilses}) one sees that $H^1(W,M)$ is finitely generated, hence $\Ext(H^1(W,M),\Z)\otimes\QZ$ vanishes.  Our duality theorem therefore gives an isomorphism
$$%\begin{equation}
H^1(W,T)\otimes\QZ\stackrel{\sim}{\rightarrow}\Hom(H^0(W,M),\Z)\otimes\QZ=H^0(W,M)^*
$$%\end{equation}
which, by the previous corollary, can be identified with the map $H^2(G,T)\rightarrow H^0(G,M)^*$.

Now consider the case where $i=1$.  Our duality theorem gives an isomorphism 
$$%\begin{equation}
H^1(W,T)_{\text{tors}}\stackrel{\sim}{\rightarrow}\Ext(H^1(W,M),\Z)=(H^1(W,M)_{\text{tors}})^*
$$%\end{equation}
which, again by the previous corollary, can be identified with the map $H^1(G,T)\rightarrow H^1(G,M)^*$.

Finally we treat the case $i=0$.  Consider the commutative diagram
$$%\begin{equation}
\xymatrix{
0\ar[r] &\Ext(H^2(W,M),\Z)\ar[r]\ar[d]^-= &H^0(G,T)\ar[r]\ar[d] &\Hom(H^1(W,M),\Z)\ar[r]\ar[d] &0\\
0\ar[r] &H^2(W,M)^*\ar[r] &H^2(G,M)^*\ar[r] &(H^1(W,M)\otimes\QZ)^*\ar[r] &0
}
$$%\end{equation}
where the top row comes from the identity $H^0(G,T)=H^0(W,T)$, our duality theorem, and (\ref{rhomz}).  The bottom row is the dual of the exact sequence of (\ref{weilgalcomp}).  The left vertical arrow is the identity since $H^2(W,M)$ is all torsion, as can be seen easily from (\ref{weilses}).  The right vertical arrow is an isomorphism upon passing to the profinite completion of $\Hom(H^1(W,M),\Z)$, hence the same is true of the middle vertical arrow.
\end{proof}

Of course, for $T=\G_m$ and $i=0$, one recovers from Tate-Nakayama Duality the reciprocity isomorphism $K^\times\otimes\hat{\Z}\stackrel{\sim}{\rightarrow} G^{ab}$ of Local Class Field Theory, where $-\otimes \hat{\Z}$ denotes profinite completion.  Hence one can recover the main statements of Local Class Field Theory by studying the cohomology of the Weil group.

%%%%%%%%%%%%%%%%%%%%%%%%%%%%%%%%%%%%%%%%%%%%%%%
%%%%%%%%%%%%%%    Section 5     %%%%%%%%%%%%%%%%%%%%%%
%%%%%%%%%%%%%%%%%%%%%%%%%%%%%%%%%%%%%%%%%%%%%%%

\section{The Weil-smooth Topology on Schemes over $K$}
\label{sec:The Weil-smooth Topology on Schemes over $K$}

For any arbitrary scheme $Y$, let us recall the definition of the smooth site $Y_{sm}$.  The underlying category is the category of schemes which are smooth and locally of finite type over $Y$, and the coverings are the surjective families.  In \cite{vanhamel2004}, van Hamel illustrates the utility of the smooth site in the study of duality theorems; the cohomology groups coincide with those familiar from the \'etale site, but the internal hom functor is better suited to proving duality results.

This chapter is devoted to introducing a variant of the Weil-\'etale topology, the \textit{Weil-smooth} topology.  This definition is motivated by the definition of the Weil-\'etale topology given by Jiang in \cite{jiang2006}, and is related to the smooth topology in the same way that the Weil-\'etale topology is related to the \'etale topology.  As with the smooth site, the internal hom functor on the Weil-smooth site is more appropriate for a functorial approach to duality results.

%%%%%%%%%%%%%%%%%%%%%%%%%%%%%%%%%%%%%%%%%%%%%%%%%%
\subsection{Definitions and Basic Properties}  

Throughout this section we fix a scheme $X$ which is smooth and finite type over $K$.

\begin{definition}
Let $\pi_1:X_L\rightarrow X$ and $\pi_2:X_L\rightarrow \Spec\ L$ be the projections.  We define the \textit{Weil-smooth} topology $W(X)$ to be the following Grothendieck topology:

\begin{itemize}

\item[(i)] The objects of $W(X)$ are the schemes which are smooth and locally of finite type over $X_L$.  That is, they are the objects of the smooth site of $X_L$.

\item[(ii)] A morphism $(V\stackrel{f}{\rightarrow} X_L) \rightarrow (Z\stackrel{g}{\rightarrow} X_L)$ of objects in $W(X)$, for connected $V$, is a map $\phi:V\rightarrow Z$ of schemes, such that (a) $\pi_1\circ g\circ \phi = \pi_1\circ f$, and (b) there exists $n\in\Z$ such that $\sigma^n\circ \pi_2\circ f = \pi_2 \circ g\circ \phi$, where $\sigma$ is the Frobenius automorphism of $L$.  If $V$ is not connected, we impose these conditions component-wise.

\item[(iii)] The coverings in $W(X)$ are the surjective families.

\end{itemize}
We let $X_W$ denote $X$ endowed with the Weil-smooth topology.
\end{definition}

If $Y$ is a scheme, $G$ is a discrete group of automorphisms of $Y$, and $F\in \mathcal{S}(Y_{sm})$, we say that $G$ \textit{acts on} $F$ if there are morphisms $F\rightarrow \tau_*F$ of sheaves for all $\tau\in G$, compatible in the obvious sense with the multiplication in $G$.  We denote the category of sheaves on $Y_{sm}$ which carry a $G$-action by $\mathcal{S}(Y_{sm})_G$.

\begin{proposition}
The category $\mathcal{S}(X_W)$ is equivalent to the category $\mathcal{S}(X_{L,sm})_{\frak{w}}$ 
\end{proposition}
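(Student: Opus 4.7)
The plan is to construct a pair of mutually quasi-inverse functors between the two categories. The key starting point is that the underlying categories $W(X)$ and $X_{L,sm}$ share exactly the same objects (schemes smooth and locally of finite type over $X_L$), while the Hom-sets differ only in that morphisms in $W(X)$ are permitted to twist the $L$-component of the structure map by an integer power of Frobenius. Writing $Z^{(n)}$ for the object $(Z,(\mathrm{id}_X \times \sigma^n) \circ g)$ attached to $(Z,g) \in X_{L,sm}$, one obtains a canonical decomposition
\[
 \Hom_{W(X)}(V, Z) \;=\; \bigsqcup_{n \in \Z} \Hom_{X_{L,sm}}(V, Z^{(n)}),
\]
the $n$-th summand being exactly the morphisms with twist $n$ in the sense of the definition, and the covering families in both topologies are just the surjective families.

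From this I would define the forgetful functor $\Phi: \mathcal{S}(X_W) \to \mathcal{S}(X_{L,sm})_{\frak{w}}$ by restricting a Weil-smooth sheaf $F$ along the inclusion $X_{L,sm} \hookrightarrow W(X)$ of the subcategory of twist-$0$ morphisms, and then extracting the $\frak{w}$-action from the identity maps $V \to V$ viewed as morphisms $V \to V^{(1)}$ of twist $1$ in $W(X)$: these supply natural arrows $F(V^{(1)}) \to F(V)$ that assemble into an isomorphism $\sigma^{\ast} F \to F$, with compatibility along $\frak{w} \cong \Z$ inherited from composition in $W(X)$. Conversely, I would construct $\Psi: \mathcal{S}(X_{L,sm})_{\frak{w}} \to \mathcal{S}(X_W)$ by setting $\Psi(F,\rho)(V)=F(V)$ on objects, and on a morphism $\phi: V \to Z$ of twist $n$, viewed via the decomposition as a smooth morphism $\phi': V \to Z^{(n)}$, by the composite $F(Z) \xrightarrow{\rho^{n}_Z} F(Z^{(n)}) \xrightarrow{F(\phi')} F(V)$.

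The main (but essentially routine) obstacle is checking that $\Psi$ respects composition: if $\phi: V \to Z$ has twist $m$ and $\psi: Z \to Y$ has twist $n$, then $\psi \circ \phi$ has twist $m+n$, and the agreement of the two ways of evaluating $F(Y) \to F(V)$ reduces to the cocycle identity defining the $\frak{w}$-action together with the naturality of $\rho$ with respect to morphisms in $X_{L,sm}$. Once this is in place, the sheaf axiom transfers freely, since both sites share the same objects and the same covering families and hence the same equalizer diagrams; and the identities $\Phi \circ \Psi \cong \mathrm{id}$, $\Psi \circ \Phi \cong \mathrm{id}$ then follow directly from the Hom-set decomposition displayed above, establishing the claimed equivalence of categories.
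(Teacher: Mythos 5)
Your construction is correct and is essentially the argument that the paper delegates to Proposition 2.2 of Lichtenbaum's 2005 paper: decompose the Hom-sets in $W(X)$ by Frobenius twist, restrict along the twist-zero subcategory to get the smooth sheaf, read off the $\frak{w}$-action from the twist-one identities, and reconstruct in the other direction using the cocycle condition. You have simply written out explicitly what the paper cites, so the two proofs agree; the only point deserving a word of care is your remark that "covering families are the same'' --- strictly speaking a $W(X)$-covering may involve nonzero twists, and one should note that it can be replaced by a twist-zero covering of twisted sources (and that fiber products in both underlying categories coincide) so that the descent condition for $\Psi(F)$ genuinely reduces to that for $F$.
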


\begin{proof}
This is the same proof as the analogous result for Weil-\'etale sheaves on schemes over finite fields; see Proposition 2.2 of \cite{lichtenbaum2005}.  
\end{proof}

%We restate it here for the sake of completeness.

%Let $F\in\mathcal{S}(X_W)$; then $F$ defines a sheaf $\upsilon(F)$ on $X_{L,sm}$ by restricting to the smooth site.  We must show that $\upsilon(F)$ carries an action of $\frak{w}$.  Let $U\rightarrow X_L$ be smooth and locally of finite type, and consider $U_\tau=U\times_{X_{L}}X_L$, where the map from $X_L$ to itself is the map induced by $\tau$.  Applying $\upsilon(F)$ to the projection map $U_\tau\rightarrow U$, we obtain a map $\upsilon(F)(U)\rightarrow \tau_*\upsilon(F)(U)$ which is functorial in $U$, and therefore defines a map of sheaves.

%Conversely, suppose that $H\in\mathcal{S}(X_{L,sm})$ has a $\frak{w}$-action.  Let $\tau\in\frak{w}$.  A map $V\rightarrow Z$ of objects in the underlying category of $X_W$ gives rise to a diagram
%$$
%\xymatrix{
%V\ar[r]\ar[d] &Z\ar[d]\\
%X_L\ar[r]^-\tau &X_L.
%}
%$$
%The universal property of the fiber product now gives a unique map $V\rightarrow Z_\tau$, which commutes with the maps to $X_L$.  Applying $H$ to this map, we get a map $H(Z)\rightarrow H(Z_\tau)\rightarrow H(V)$, where the first map comes from the $\frak{w}$-action on $H$.  Therefore $H$ defines an object of $\mathcal{S}(X_W)$.
%\end{proof}

Let $\bar{X}=X\times_K\bar{L}$ and let $\phi:\bar{X}\rightarrow X_L$.  In a slight abuse of notation, for any $F\in\mathcal{S}(X_W)=\mathcal{S}(X_{L,sm})_{\frak{w}}$, we denote the pullback to $\bar{X}$ also by $F$.

\begin{proposition}
The group $F(\bar{X})$ is a $W$-module, and $H^0(W, F(\bar{X})) = H^0(\frak{w},F(X_L))$.
\end{proposition}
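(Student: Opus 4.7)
The plan is to build a $W$-module structure on $F(\bar{X})$ by pulling the $\frak{w}$-equivariant sheaf $F$ on $X_{L,sm}$ back to $\bar{X}$, and then to compute $H^0(W,F(\bar{X}))$ in two stages: first taking $I$-invariants, then $\frak{w}$-invariants.

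First I would construct the $W$-action. The group $W$ acts on $\bar{L}$ extending the $\frak{w}$-action on $L$, hence on $\bar{X}=X\times_K\bar{L}$ by automorphisms of $K$-schemes. For $w\in W$ lying over $\sigma^n\in\frak{w}$, the automorphism $w:\bar{X}\to\bar{X}$ covers $\sigma^n:X_L\to X_L$, so the square with vertical maps $\phi$ commutes. The $\frak{w}$-equivariant structure on $F$ gives a canonical isomorphism $(\sigma^n)^*F\cong F$ on $X_L$; pulling back along $\phi$ and using the commuting square produces a canonical isomorphism $w^*\phi^*F\cong \phi^*F$ on $\bar{X}$. Taking global sections yields a self-map of $F(\bar{X})=(\phi^*F)(\bar{X})$, and the cocycle conditions packaged in the $\frak{w}$-equivariant structure ensure that these assemble into a group action of $W$.

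Next I would verify continuity and compute $F(\bar{X})^I$. Writing $\bar{X}=\varprojlim_{L'}X_{L'}$ over the finite Galois sub-extensions $L'/L$ of $\bar{L}/L$, one has $F(\bar{X})=\varinjlim_{L'}F(X_{L'})$; the $I$-action on each $F(X_{L'})$ factors through the finite quotient $\Gal(L'/L)$, so $F(\bar{X})$ is a discrete $I$-module. Since $X_{L'}\to X_L$ is an \'etale Galois covering, the sheaf axiom gives $F(X_L)=F(X_{L'})^{\Gal(L'/L)}$; passing to the colimit, and using that $I$-fixed points commute with filtered colimits of discrete $I$-modules, I obtain $F(\bar{X})^I=F(X_L)$.

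Finally, from the extension $0\to I\to W\to\frak{w}\to 0$ I conclude
$$
H^0(W,F(\bar{X}))=(F(\bar{X})^I)^{\frak{w}}=F(X_L)^{\frak{w}}=H^0(\frak{w},F(X_L)),
$$
where the middle equality uses that the residual $\frak{w}$-action on $F(X_L)$ inherited from the $W$-action on $F(\bar{X})$ agrees with the given $\frak{w}$-equivariant structure on $F$. This compatibility is immediate from the construction: for a lift $\tilde\sigma\in W$ of $\sigma$, the endomorphism of $F(X_L)\subseteq F(\bar{X})$ induced by $\tilde\sigma$ is precisely the one coming from the canonical isomorphism $\sigma^*F\cong F$. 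The main obstacle is the bookkeeping in the first step, namely writing the $W$-action in a way that it is manifestly a group action and that its restriction to $F(X_L)$ recovers the original $\frak{w}$-structure; the subsequent \'etale descent for $X_{L'}\to X_L$ and the two-stage invariants computation are then routine.
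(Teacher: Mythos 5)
Your proof is correct, and it takes a genuinely different route from the paper's. The paper builds the $W$-module structure algebraically: it fixes a lift $a\in W$ of Frobenius, writes $F(\bar{X})=\varinjlim_{L'}F(X_{L'})$, uses the splitting $W/I'\simeq\Gal(L'/L)\rtimes\frak{w}$ determined by (the image of) $a$ to patch together the $\Gal(L'/L)$-action from the sheaf axiom with the $\frak{w}$-action from the equivariant structure on each $F(X_{L'})$, passes to the colimit, and finally notes that a different choice of $a$ gives an isomorphic module. You instead build the action geometrically and without choices: $W$ acts on $\bar{X}$ over $X$ covering the $\frak{w}$-action on $X_L$ under $\phi$, so the equivariance isomorphisms $(\sigma^n)^*F\cong F$ pull back along $\phi$ to $w^*\phi^*F\cong\phi^*F$, giving the action on sections; the cocycle conditions on the $\frak{w}$-equivariant structure supply the group-law compatibility. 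Your approach buys canonicity (no lift to choose, no independence check), and your identification $F(\bar{X})^I=F(X_L)$ by \'etale descent together with the two-stage invariants computation $H^0(W,-)=(-)^{I})^{\frak{w}}$ is essentially the paper's concluding step in disguise. The paper's approach buys explicitness and self-containedness at the level of finite quotients $W/I'$, making the semi-direct product structure visible, whereas yours silently leans on the (true, and used elsewhere in the paper) fact that $W$ acts on $\bar{L}$ extending the $\frak{w}$-action on $L$ and the $I$-action on $\bar{K}$. Both are valid; the one point worth being a little more careful about in your writeup is that the compatibility of the two equivariance isomorphisms with composition (i.e., that $w\mapsto\bigl(w^*\phi^*F\cong\phi^*F\bigr)$ really is a cocycle) uses the commutation $w_2^*\phi^*=\phi^*(\sigma^{n_2})^*$ together with the multiplication-compatibility built into the $\frak{w}$-equivariant structure, and you should say so explicitly.
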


\begin{proof}
It is clear that $F(\bar{X})$ is an $I$-module; we must show that this action extends to all of $W$.  Let $a\in W$ be a preimage of the Frobenius element $\sigma$ of $\frak{w}$.  We have $F(\bar{X})=\varinjlim_{L'/L}F(X_{L'})$ where the limit ranges over all of the finite extensions $L'$ of $L$.  

For a fixed finite extension $L'/L$, let $I'\subseteq I$ be the open subgroup corresponding to $L'$, and let $a'\in W/I'$ be the image of $a$.  There is a short exact sequence
$$1\rightarrow \Gal(L'/L)\rightarrow W/I'\rightarrow \frak{w}\rightarrow 1$$
of groups, and the choice of $a'$ determines a splitting $W/I'\simeq \Gal(L'/L)\rtimes \frak{w}$.
Because $F$ is endowed with an action of $\frak{w}$, the groups $\Gal(L'/L)$ and $\frak{w}$ both act on $F(X_{L'})$, and it is easy to check these actions are compatible with the decomposition of $W/I'$ as a semi-direct product.  Therefore $W/I'$ acts on $F(X_{L'})$, and passing to the limit over all $L'$ shows that $F(\bar{X})$ is a $W$-module.  One can verify easily that a different choice of $a$ gives an isomorphic $W$-module.

The second statement follows from the description of the $W$-module structure, and the fact that $H^0(\Gal(L'/L),F(X_{L'}))=F(X_L)$ for any finite extension $L'/L$.
\end{proof}

\begin{definition}
Let $F\in\mathcal{S}(X_W)$.  Define the \textit{$i^{th}$ Weil-smooth cohomology group} of $X$ with coefficients in $F$ by setting $\Gamma_X(F)=H^0(X_W,F)=H^0(\frak{w},F(X_L))$, and letting $H^i(X_W,F)$ be the $i^{th}$ right derived functor of $H^0(X_W,-)$ applied to $F$.  If $F$ is a complex of sheaves in $\mathcal{D}(X_W)$, we let $R\Gamma_X(F)$ denote the derived functor of $\Gamma_X$ applied to $F$.
\end{definition}

\begin{theorem}
There are spectral sequences
\begin{itemize}
\item[(i)] $H^p(\frak{w},H^q(X_{L,sm},F))\Rightarrow H^{p+q}(X_W,F)$, and
\item[(ii)] $H^p(W,H^q(\xbar_{sm}, F))\Rightarrow H^{p+q}(X_W,F)$
\end{itemize}
for any $F\in \mathcal{S}(X_W)$.
\end{theorem}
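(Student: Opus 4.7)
The plan is to derive both spectral sequences as Grothendieck composite functor (Leray) spectral sequences, by factoring $\Gamma_X$ through an intermediate ``equivariant'' category: $\frak{w}$-modules for (i) and continuous $W$-modules for (ii).

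For (i), observe that by the preceding proposition, $\Gamma_X$ factors as the composition
$$
\mathcal{S}(X_W) \stackrel{\Phi_1}{\longrightarrow} \frak{w}\text{-Mod} \stackrel{(-)^{\frak{w}}}{\longrightarrow} \mathcal{A}b,
$$
where $\Phi_1(F)=F(X_L)$ is endowed with its natural $\frak{w}$-action. One sees that $\Phi_1$ is the pushforward along a natural morphism of topoi $f_1:X_W\to B\frak{w}$, whose inverse image $f_1^*$ simply forgets the $\frak{w}$-action. Since $f_1^*$ is exact, its right adjoint $\Phi_1$ preserves injectives, so the Grothendieck spectral sequence applies and yields
$$
H^p(\frak{w},R^q\Phi_1\,F)\Rightarrow H^{p+q}(X_W,F).
$$

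It remains to identify $R^q\Phi_1\,F$ with $H^q(X_{L,sm},F)$ as a $\frak{w}$-module. I would argue this by showing that the forgetful functor $U:\mathcal{S}(X_W)\to\mathcal{S}(X_{L,sm})$ preserves injectives: it has the exact left adjoint $G\mapsto\bigoplus_{n\in\Z}\sigma^{n\ast}G$, where the $\frak{w}$-action shifts the indices. Hence, given an injective resolution $F\to I^\bullet$ in $\mathcal{S}(X_W)$, the complex $UI^\bullet$ is an injective resolution in $\mathcal{S}(X_{L,sm})$, and both $R^q\Phi_1\,F$ and $H^q(X_{L,sm},F)$ are computed as the $q$-th cohomology of $I^\bullet(X_L)$. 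The $\frak{w}$-action on the latter is the one inherited from the $\frak{w}$-equivariance of the terms of the resolution, which is exactly the natural $\frak{w}$-structure on $H^q(X_{L,sm},F)$.

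For (ii), the argument runs in parallel: $\Gamma_X$ factors as $\Phi_2:F\mapsto F(\xbar)$, followed by $(-)^W$, and $\Phi_2$ is the direct image along a morphism of topoi $X_W\to BW$, where $BW$ denotes continuous $W$-sets. The continuous $W$-action on $F(\xbar)=\varinjlim_{L'/L}F(X_{L'})$ arises as in the preceding proposition: each term carries a $W/I'$-action for the open subgroup $I'\subseteq I$ corresponding to $L'$, and these actions are compatible. Exactness of $\Phi_2^*$ (forget the $W$-action) shows $\Phi_2$ preserves injectives, and one argues that $R^q\Phi_2\,F=H^q(\xbar_{sm},F)$ by the same adjoint/forgetful argument (using colimits of the analogous left adjoints over finite Galois extensions $L'/L$). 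Grothendieck's theorem then gives the desired $H^p(W,H^q(\xbar_{sm},F))\Rightarrow H^{p+q}(X_W,F)$.

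The main technical obstacle is the construction and verification of the left adjoint to $U$, together with the assembly of the natural $\frak{w}$- and $W$-module structures on higher cohomology — in particular checking that the twists $\sigma^{n\ast}G$ and the limit over $L'$ define sheaves on the appropriate sites and that the resulting equivariant structures agree with those appearing in the spectral sequence abutment. Once these formalities are settled, both statements are immediate from the composite functor theorem.
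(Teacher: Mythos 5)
For part (i), your approach is the same as the paper's: factor $\Gamma_X$ as $F\mapsto F(X_L)\mapsto H^0(\frak{w},F(X_L))$ and show the first factor preserves injectives via an exact left adjoint. Your additional lemma that the forgetful functor $U:\mathcal{S}(X_W)\to\mathcal{S}(X_{L,sm})$ preserves injectives via the exact left adjoint $G\mapsto\bigoplus_n\sigma^{n*}G$ is correct and usefully makes explicit the identification $R^q\Phi_1 F = H^q(X_{L,sm},F)$, which the paper leaves implicit. One slip: $f_1^*$ does not ``forget the $\frak{w}$-action'' --- it goes the other direction (from $\frak{w}$-Mod to $\mathcal{S}(X_W)$) and, as the paper states, sends a $\frak{w}$-module to the corresponding locally constant sheaf; forgetting the action would not land you in $\mathcal{S}(X_W)$ at all.

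For part (ii), you take a genuinely different route. You attempt a second direct Grothendieck factorization $\Gamma_X=(-)^W\circ\Phi_2$ with $\Phi_2(F)=F(\xbar)$, whereas the paper avoids this: it cites Milne's Hochschild--Serre spectral sequence for the pro-cover $\xbar\to X_L$ (giving $R\Gamma_{X_L}\simeq R\Gamma_I\circ R\Gamma_{\xbar}$), combines it with the group-extension identity $R\Gamma_W\simeq R\Gamma_{\frak{w}}\circ R\Gamma_I$ and part (i), and reassembles the derived functors into $R\Gamma_X\simeq R\Gamma_W\circ R\Gamma_{\xbar}$. The reason this matters: your crucial identification $R^q\Phi_2 F=H^q(\xbar_{sm},F)$ is not ``the same adjoint/forgetful argument'' as in (i). The map $\xbar\to X_L$ is a pro-object of the site, not an object of it, so the functor $\mathcal{S}(X_W)\to\mathcal{S}(\xbar_{sm})$ is not a simple forgetful functor with a $\bigoplus_n\sigma^{n*}$-type left adjoint; showing it preserves injectives (or acyclics) is precisely the nontrivial content of Milne's Theorem III.2.20 and Remark III.2.21(a), and your proposal gestures at it (``using colimits of the analogous left adjoints over finite Galois extensions $L'/L$'') without carrying it out. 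Your route is more self-contained if those details are filled in, but as written it relocates rather than removes the hard step; the paper's route buys economy by treating Milne's theorem as a black box and doing the bookkeeping in $\mathcal{D}(\Z)$.
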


\begin{proof}
To establish the first spectral sequence, note that we can factor $\Gamma_X$ as $F\mapsto F(X_L)\mapsto H^0(\frak{w},F(X_L))$.  The functor $F\mapsto F(X_L)$ preserves injectives, since it has as exact left adjoint the functor $\frak{w}$-Mod $\rightarrow \mathcal{S}(X_{L,sm})_{\frak{w}}$ which takes a $\frak{w}$-module to the corresponding locally constant sheaf.  Part (i) is now just the spectral sequence of composite functors.

To see that the second spectral sequence holds, note that for any sheaf $F\in\mathcal{S}(X_{L,sm})_{\frak{w}}$, we have a spectral sequence
$$H^p(I,H^q(\bar{X}_{sm}, F))\Rightarrow H^{p+q}(X_{L,sm},F)$$
of $\frak{w}$-modules; see Theorem III.2.20 and Remark III.2.21(a) of \cite{milne1980}.  In derived category language, we have an isomorphism $R\Gamma_I\circ R\Gamma_{\bar{X}}(F) \simeq R\Gamma_{X_L}(F)$.  Applying $R\Gamma_{\frak{w}}$ to both sides of this isomorphism, we obtain
\begin{eqnarray*}
R\Gamma_W\circ R\Gamma_{\bar{X}}(F)&\simeq& R\Gamma_{\frak{w}}\circ R\Gamma_I\circ R\Gamma_{\bar{X}}(F)\\
&\simeq& R\Gamma_{\frak{w}}\circ R\Gamma_{X_L}(F)\\
&\simeq& R\Gamma_X(F).
\end{eqnarray*}
The first isomorphism is simply the spectral sequence coming from the group extension $1\rightarrow I\rightarrow W\rightarrow \frak{w}\rightarrow 1$, and the third isomorphism is from part (i).  The isomorphism $R\Gamma_X(F)\simeq R\Gamma_W\circ R\Gamma_{\bar{X}}(F)$ defines the desired spectral sequence, as in Corollary 10.8.3 of \cite{weibel1994}.
\end{proof}

\begin{corollary}
Suppose that $F\in\mathcal{S}(K_W)$ is defined by a smooth commutative group scheme defined over $K$.  Then $H^p(K_W,F)=H^p(W,F(\bar{L}))$.  
\end{corollary}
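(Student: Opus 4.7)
The plan is to deduce this directly from the second spectral sequence in the theorem immediately preceding the corollary, applied to $X = \Spec K$. In this case $\bar X = \Spec\bar L$, and the spectral sequence becomes
$$
H^p(W, H^q(\Spec(\bar L)_{sm}, F)) \Rightarrow H^{p+q}(K_W, F).
$$
The whole content of the corollary is then the assertion that the $E_2$-page is concentrated in the row $q=0$, where the term reads $H^p(W, F(\bar L))$.

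First I would identify the $q=0$ term. For any sheaf on the smooth site of $\Spec \bar L$, taking global sections amounts to evaluation at $\Spec\bar L$, so $H^0(\Spec(\bar L)_{sm}, F) = F(\bar L)$. The $W$-action on this group is exactly the one constructed in the preceding proposition, so the identification is compatible with what the spectral sequence requires.

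Next I would show that $H^q(\Spec(\bar L)_{sm}, F) = 0$ for $q \geq 1$ when $F$ is represented by a smooth commutative group scheme over $K$ (base-changed to $\bar L$). This is the key input and the main obstacle, since the smooth site has strictly more coverings than the \'etale site. The argument has two parts: (i) for smooth commutative group schemes, smooth cohomology and \'etale cohomology agree, a classical fact going back to Grothendieck (and recorded in van~Hamel~\cite{vanhamel2004}, which is already cited for precisely this purpose); (ii) since $\bar L$ is algebraically closed, the \'etale site of $\Spec \bar L$ is trivial, so \emph{all} \'etale cohomology in positive degrees vanishes. Combining (i) and (ii) kills the higher $q$ terms.

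With the $E_2$-page concentrated in a single row, the spectral sequence degenerates on the first page and produces canonical isomorphisms $H^p(K_W, F) \simeq H^p(W, F(\bar L))$ for every $p \geq 0$, which is exactly the claim. The principal step to watch carefully is the smooth--\'etale comparison for representable sheaves; everything else is a bookkeeping consequence of the spectral sequence and of the construction of the $W$-module structure on $F(\bar L)$ given in the preceding proposition.
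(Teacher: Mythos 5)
Your proposal is correct and follows exactly the paper's own argument: apply the second spectral sequence $H^p(W,H^q(\bar{X}_{sm},F))\Rightarrow H^{p+q}(X_W,F)$ with $X=\Spec\ K$, and kill the $q\geq 1$ terms by the smooth--\'etale comparison for representable sheaves (van Hamel \S 1.2) combined with the triviality of the \'etale cohomology of $\Spec\ \bar{L}$. No meaningful divergence from the paper's proof.
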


\begin{proof}
This is immediate from the second spectral sequence of the previous theorem, and the fact that $H^q(\bar{L}_{sm},F)=0$ for all $q\geq 1$ and all such $F$.  This last statement follows, for example, from the fact that smooth and \'etale cohomology agree for sheaves given by smooth commutative group schemes; see \S 1.2 of \cite{vanhamel2004}.
\end{proof}

%%%%%%%%%%%%%%%%%%%%%%%%%%%%%%%%%%%%%%%%%%
\subsection{Smooth and Weil-Smooth Sheaves}

Adjusting notation slightly, let $\rho:X_L\rightarrow X$ be the natural map.  Following Proposition 2.4 of \cite{lichtenbaum2005}, we use $\rho$ to define a pair of adjoint functors.  For any $F\in\mathcal{S}(X_{sm})$, the sheaf $\rho^*F\in\mathcal{S}(X_L)$ carries a natural $\frak{w}$-action, and hence $\rho^*$ defines a pullback functor $\rho^*:\mathcal{S}(X_{sm})\rightarrow \mathcal{S}(X_W)$.  We define $\rho_*^\frak{w}:\mathcal{S}(X_W)\rightarrow \mathcal{S}(X)$ by the rule $(\rho_*^\frak{w}G)(U) = H^0(\frak{w},G(U_L))$ for any $G\in\mathcal{S}(X_W)$.

\begin{proposition}\label{someprops}
Let $\rho^*$ and $\rho_*^\frak{w}$ be as above.  We have the following:
\begin{itemize}
\item[(i)]  $\rho^*$ is left adjoint to $\rho_*^\frak{w}$.
\item[(ii)] $\rho^*$ is exact, and therefore $\rho_*^\frak{w}$ preserves injectives.
\item[(iii)] If $G/X$ is a smooth commutative group scheme which is locally of finite type, then there is natural isomorphism $\rho^*G\stackrel{\sim}{\rightarrow} G_L$ in $\mathcal{S}(X_W)$.
\item[(iv)] For any $G\in\mathcal{S}(X_{sm})$, there is a canonical map $G\rightarrow \rho_*^\frak{w}\rho^*G$, which is an isomorphism when $G$ is representable by a smooth commutative group scheme which is locally of finite type.
\item[(v)] For any smooth sheaf $F\in\mathcal{S}(X_{sm})$, there is a map of spectral sequences from
$$H^i(G,H^j(X_{\bar{K},sm},F))\Rightarrow H^{i+j}(X_{sm},F)$$
to
$$H^i(W,H^j(\bar{X},\phi^*\rho^*F))\Rightarrow H^{i+j}(X_W,\rho^*F).$$
\end{itemize}
\end{proposition}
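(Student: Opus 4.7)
The proof splits into five parts, and the first three are essentially formal. For (i), the key point is that $(\rho^*,\rho_*)$ is already adjoint as a pair of functors between $\mathcal{S}(X_{sm})$ and $\mathcal{S}(X_{L,sm})$ for the underlying base change of sites $X_L\to X$. A morphism $\rho^*F\to G$ in $\mathcal{S}(X_W)=\mathcal{S}(X_{L,sm})_{\frak{w}}$ is by definition a $\frak{w}$-equivariant morphism in $\mathcal{S}(X_{L,sm})$; transposing across the underlying adjunction yields a morphism $F\to \rho_*G$ whose image lies in the $\frak{w}$-fixed subsheaf $\rho_*^{\frak{w}}G$, giving the bijection in (i). For (ii), the functor $\rho^*:\mathcal{S}(X_{sm})\to \mathcal{S}(X_{L,sm})$ is exact, and exactness in $\mathcal{S}(X_W)$ is detected after forgetting the $\frak{w}$-structure; preservation of injectives by $\rho_*^{\frak{w}}$ is then automatic. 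For (iii), the pullback of a representable sheaf is represented by the base change, so for any smooth $Y\to X_L$ one has $(\rho^*G)(Y)=\Hom_X(Y,G)=\Hom_{X_L}(Y,G_L)=G_L(Y)$, with compatible $\frak{w}$-structures.

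Part (iv) requires a small computation. The map $G\to \rho_*^{\frak{w}}\rho^*G$ is the unit of the adjunction from (i). For $G$ represented by a smooth commutative group scheme locally of finite type, I would evaluate using (iii):
$$(\rho_*^{\frak{w}}\rho^*G)(U)=H^0(\frak{w},G(U_L))=G(U_L)^{\frak{w}},$$
and identify this with $G(U)$ using the fact that $L^{\frak{w}}=K$, which was noted earlier in the paper. In the affine case $U=\Spec\ A$, $G=\Spec\ B$, this reduces to the observation that a $K$-algebra map $B\to A\otimes_K L$ is $\frak{w}$-fixed if and only if its image lies in $(A\otimes_K L)^{\frak{w}}=A\otimes_K L^{\frak{w}}=A$, where the first equality uses that $A$ is free as a $K$-module. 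The general case follows by gluing affine opens.

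For (v), the map of spectral sequences is built from two natural ingredients. First, the base change $\beta:\bar{X}\to X_{\bar{K}}$ along $\bar{K}\subset \bar{L}$ is $W$-equivariant once $X_{\bar{K}}$ is given the $W$-action via the inclusion $W\subset G$, so pullback of sheaves along $\beta$ yields a $W$-equivariant map $H^j(X_{\bar{K},sm},F)\to H^j(\bar{X},\phi^*\rho^*F)$. Second, the inclusion $W\subset G$ provides restriction maps $H^i(G,-)\to H^i(W,-)$ on group cohomology. Composing these yields the desired map on $E_2$-terms. To package the compatibility with the abutment map $\rho^*:H^{i+j}(X_{sm},F)\to H^{i+j}(X_W,\rho^*F)$, I would lift the entire construction to the derived category, realizing both spectral sequences as those of composite derived functors $R\Gamma_{X_{sm}}(F)\simeq R\Gamma_G(R\Gamma_{X_{\bar{K}}}(F))$ and $R\Gamma_{X_W}(\rho^*F)\simeq R\Gamma_W(R\Gamma_{\bar{X}}(\phi^*\rho^*F))$, then exhibit a compatible natural transformation between these composites.

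The main obstacle is exactly this final compatibility check in (v): the map at the $E_2$-level is transparent from the constructions, but verifying agreement with $\rho^*$ on abutments requires juggling both the change in ``base group'' (from $G$ to $W\subset G$) and the change in ``geometric fiber'' (from $X_{\bar{K}}$ to $\bar{X}$ via $\beta$) simultaneously. The cleanest resolution is to display a single commuting square of sites or topoi involving $\bar{X}\to X_{\bar{K}}$ together with $W\hookrightarrow G$, and to extract the morphism of Leray-type spectral sequences functorially from this square; the verification then reduces to a diagrammatic check at the level of morphisms of composite derived functors rather than a page-by-page comparison.
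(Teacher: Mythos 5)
Your proposal is correct and follows essentially the same approach as the paper, supplying comparable detail on parts (i), (ii), (iv), and (v). The only stylistic difference is in part (iii), where you cite the general fact that pullbacks of representable sheaves along a base-change morphism of smooth sites are represented by the base change, while the paper instead derives $\rho^*G\simeq G_L$ by a short uniqueness-of-adjoints argument using part (i) and the Yoneda lemma; both routes are valid and of comparable length.
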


\begin{proof}
Part (i) is proved in the usual manner, and part (ii) holds because pullback is always exact.  To see part (iii), we imitate the proof of Chapter II, Remark 3.1(d) of \cite{milne1980}.  Let $G/X$ be a smooth commutative group scheme, which we identify with the sheaf it defines on $X_{sm}$, and let $F\in\mathcal{S}(X_W)$ be any Weil-smooth sheaf.  By definition of $\rho_*^\frak{w}$ we have $H^0(\frak{w},F(G_L))\stackrel{\sim}{\rightarrow} (\rho_*^\frak{w}F)(G).$ By basic properties of representable sheaves, this implies that
$$\Hom_{X_W}(G_L,F)\stackrel{\sim}{\rightarrow} \Hom_{X_{sm}}(G,\rho_*^\frak{w}F)$$
and the result follows by uniqueness of adjoints.

The map described in part (iv) is the map induced by the adjunction map $G\rightarrow \rho_*\rho^*G$, the image of which lands in $\rho_*^\frak{w}\rho^*G$.  When $G$ is given by a smooth commutative group scheme, we have by part (iii) that $\rho^*G=G_L$, and thus the map is the natural map $G(U)\rightarrow H^0(\frak{w},G_L(U_L))$.  It is easy to see that this induces an isomorphism of sheaves.  The map of spectral sequences in part (v) is simply the map induced by the inclusion $W\rightarrow G$ and the projection $\bar{X}\rightarrow X_{\bar{K}}$.  
\end{proof}

The exact functor $\rho^*$ extends naturally to a functor $\rho^*:\mathcal{D}(X_{sm})\rightarrow \mathcal{D}(X_W)$ between the corresponding derived categories.    If $F\in\mathcal{D}(X_{sm})$ is a complex such that $H^i(F)$ is representable by some smooth commutative group scheme $G^i/X$ for all $i$, then by the exactness of $\rho^*$ we have $H^i(\rho^*F)=\rho^*G^i = G^i_L$.  If $F\in\mathcal{S}(X_{sm})$ is representable by a smooth commutative group scheme which is locally of finite type, then we will often simply write $F$ instead of $\rho^*F$ for the corresponding Weil-smooth sheaf it defines.

%%%%%%%%%%%%%%%%%%%%%%%%%%%%%%%%%%%%%%%%%%%
\subsection{Internal Hom and Pairings}

Let $X/K$ be a smooth scheme of finite type over $K$, and let $F,F'\in\mathcal{S}(X_W)$.  We define the Weil-smooth sheaf hom by $\underline{\Hom}(F,F') = \underline{\Hom}_{X_W}(F,F') = \underline{\Hom}_{X_{L,sm}}(F,F'),$ which carries a natural $\frak{w}$-action.  The functor $\underline{\Hom}(F,-)$ is left exact, and we denote by $R\underline{\Hom}(F,-)$ its derived functor.

\begin{lemma}\label{pullbackhom}
For any $F,G\in\mathcal{D}(K_{sm})$, there is a canonical map
$$%\begin{equation}
\Phi(F,G):\rho^*R\underline{\Hom}_{K_{sm}}(F,G)\rightarrow R\underline{\Hom}(\rho^*F,\rho^*G)
$$%\end{equation}
in $\mathcal{D}(K_W)$.  If $G=\G_m$ and $F$ is a torus, an abelian variety, or a free finitely generated group scheme, then $\Phi(F,G)$ is an isomorphism.
\end{lemma}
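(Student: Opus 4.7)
The plan is as follows. First I would construct $\Phi(F,G)$ at the level of complexes. Choose an injective resolution $G \to I^\bullet$ in $\mathcal{S}(K_{sm})$, so that $R\underline{\Hom}_{K_{sm}}(F,G)$ is represented by $\underline{\Hom}_{K_{sm}}(F, I^\bullet)$. Since $\rho^*$ is exact by Proposition \ref{someprops}(ii), applying it yields a complex $\rho^*\underline{\Hom}_{K_{sm}}(F, I^\bullet)$ in $\mathcal{D}(K_W)$. For each sheaf $H \in \mathcal{S}(K_{sm})$, the adjunction $(\rho^*, \rho_*^{\frak{w}})$ together with the universal property of the internal Hom produces a natural map $\rho^*\underline{\Hom}_{K_{sm}}(F, H) \to \underline{\Hom}(\rho^*F, \rho^*H)$. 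Composing with a quasi-isomorphism $\rho^*I^\bullet \to J^\bullet$ into an injective resolution of $\rho^*G$ in $\mathcal{S}(K_W)$ defines $\Phi(F,G)$; independence of choices is standard.

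For the isomorphism claim I would split into the three cases. If $F$ is a free finitely generated group scheme, then (locally for the smooth topology) $F$ is a product of copies of $\Z$, and $R\underline{\Hom}_{K_{sm}}(\Z, \G_m) = \G_m$ concentrated in degree zero, so both sides reduce to $\rho^*\G_m = \G_{m,L}$ via Proposition \ref{someprops}(iii). If $F = T$ is a torus with character group $M$, then $R\underline{\Hom}_{K_{sm}}(T, \G_m)$ is concentrated in degree zero and equals $M$; this is precisely the vanishing statement underlying the smooth-site duality theory of \cite{vanhamel2004} (and is the reason one works with the smooth rather than the \'etale site). The same formula holds verbatim over $L$ for $T_L$ with character group $M_L$, and $\Phi$ becomes the tautological identification $\rho^* M = M_L$. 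If $F = A$ is an abelian variety, the Weil--Barsotti formula in the smooth topology yields $R\underline{\Hom}_{K_{sm}}(A, \G_m) \simeq A^\vee[-1]$, where $A^\vee$ is the dual abelian variety; the identical statement holds over $L$, and $\Phi$ is then the canonical isomorphism $\rho^* A^\vee \simeq A^\vee_L$ coming from the compatibility of dual abelian varieties with base change.

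The main obstacle I expect is verifying the abelian variety case rigorously. While the Weil--Barsotti formula is standard, one must check that its formation commutes with $\rho^*$: that the Poincar\'e biextension pulls back from $K$ to $L$ correctly, and that the vanishing of the higher sheaf-Ext groups $\mathcal{E}xt^i_{sm}(A, \G_m)$ for $i \neq 1$ transports from the smooth site of $K$ to the Weil-smooth site. Both reduce to the flatness of $L/K$ and the good behavior of smooth cohomology and representing schemes under flat base change, but making the identifications explicit and natural enough to assemble them into the single derived-category statement $\Phi(F,\G_m)$ is the delicate step.
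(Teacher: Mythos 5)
Your proposal is correct and follows essentially the same approach as the paper: construct $\Phi$ from the unit of the $(\rho^*,\rho_*^\frak{w})$-adjunction (your resolution-level construction is equivalent to the paper's derived-category adjunction chain), then verify the isomorphism case-by-case using van Hamel's identifications $M^{D_{sm}}\simeq T$, $T^{D_{sm}}\simeq M$, and $A^{D_{sm}}\simeq A^t[-1]$ together with their compatibility with base change to $L$. The one point you could streamline is noting that for $G=\G_m$ the unit $\G_m\rightarrow\rho_*^\frak{w}\rho^*\G_m$ is already an isomorphism by Proposition~\ref{someprops}(iv), which is what makes the comparison map so clean in each of the three cases.
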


\begin{proof}
By standard adjointness properties (see \cite{weibel1994}, Chapter 10.7.1), we have identifications
\begin{eqnarray*}
&& \Hom_{\mathcal{D}(K_W)}(\rho^*R\underline{\Hom}_{K_{sm}}(F,G),R\underline{\Hom}(\rho^*F,\rho^*G))\\
&=& \Hom_{\mathcal{D}(K_{sm})}(R\underline{\Hom}_{K_{sm}}(F,G),\rho_*^\frak{w}R\underline{\Hom}(\rho^*F,\rho^*G))\\
&=& \Hom_{\mathcal{D}(K_{sm})}(R\underline{\Hom}_{K_{sm}}(F,G),R\underline{\Hom}_{K_{sm}}(F,\rho_*^\frak{w}\rho^*G)).
\end{eqnarray*}
We define $\Phi(F,G)$ to be the map induced by the canonical map $G\rightarrow \rho_*^\frak{w}\rho^*G$ of (\ref{someprops} (iv)), which is the identity map when $G=\G_m$.

Now set $G=\G_m$, and suppose that $F=M$ is a free finitely generated commutative group scheme.  In this case $R\underline{\Hom}_{K_{sm}}(M,\G_m) = \underline{\Hom}_{K_{sm}}(M,\G_m) = T$ is a torus (see \cite{vanhamel2004}, Corollary 1.4).   Thus $\Phi(M,\G_m)$ is the natural map
$$\Phi(M,\G_m) : T_L\rightarrow \underline{\Hom}_{K_W}(M_L,\G_{m,L})$$
which is clearly an isomorphism.  For $F=T$ a torus, the same argument, with the roles of $T$ and $M$ reversed, shows that $\Phi(F,\G_m)$ is an isomorphism.

For $F=A$ an abelian variety we have $R\underline{\Hom}_{K_{sm}}(A,\G_m)=\underline{\Ext}_{K_{sm}}^1(A,\G_m) = A^t[-1]$ (see \cite{vanhamel2004}, Corollary 1.4), where $A^t$ is the dual abelian variety of $A$.  The map $\Phi(A,\G_m)$ now reads
$$\Phi(A,\G_m):A^t_L[-1]\rightarrow \underline{\Ext}_{K_W}^1(A_L,\G_{m,L})$$
which is an isomorphism by the compatibility of the Barsotti-Weil formula with base change.
\end{proof}

 If $F\in\mathcal{D}(K_W)$ is any bounded complex of sheaves, we define its \textit{Cartier Dual} by $F^D:=R\underline{\Hom}(F,\G_m)$.  If $G\in\mathcal{D}(K_{sm})$ we define $G^{D_{sm}}:=R\underline{\Hom}_{K_{sm}}(G,\G_m)$.  The previous proposition essentially says that if we restrict ourselves to tori and their cocharacter groups, and abelian varieties, we have $\rho^*((-)^{D_{sm}}) = (\rho^*(-))^D$.

\begin{proposition}\label{duals}  Let $T$ be a torus over $K$ with cocharacter group $M$, and $A$ an abelian variety over $K$ with dual abelian variety $A^t$.  Then we have the following natural isomorphisms in $\mathcal{D}(K_W)$:
\begin{eqnarray}\label{inthom}
M^D \simeq  T,\ T^D \simeq M,\ A^D \simeq A^t[-1].
\end{eqnarray}
\end{proposition}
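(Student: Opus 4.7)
The plan is to deduce this proposition directly from Lemma \ref{pullbackhom}, which already did most of the work. The key observation is that each of the three isomorphisms claimed in the proposition is just a translation of a classical smooth-site duality result via the canonical comparison map $\Phi(F,\G_m)$.

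First I would recall, from the proof of Lemma \ref{pullbackhom}, the three smooth-site computations: $\underline{\Hom}_{K_{sm}}(M,\G_m)=T$ when $M$ is the cocharacter group of the torus $T$, $\underline{\Hom}_{K_{sm}}(T,\G_m)=M$, and $R\underline{\Hom}_{K_{sm}}(A,\G_m)=A^t[-1]$ via the Barsotti--Weil formula. All three identities take place in $\mathcal{D}(K_{sm})$ and are standard; the references in \cite{vanhamel2004}, Corollary 1.4, cover the torus and abelian variety cases.

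Next I would apply $\rho^*$ to each of these identities, obtaining expressions of the form $\rho^* F^{D_{sm}}$ in $\mathcal{D}(K_W)$. By Lemma \ref{pullbackhom}, the canonical map $\Phi(F,\G_m):\rho^* F^{D_{sm}}\to F^D$ is an isomorphism whenever $F$ is a torus, an abelian variety, or a free finitely generated group scheme, which covers exactly the three cases we care about. Since $\rho^*$ is exact and sends smooth commutative group schemes to their Weil-smooth incarnations (by Proposition \ref{someprops}(iii)), the pullbacks $\rho^* T$, $\rho^* M$, and $\rho^* A^t[-1]$ are identified, under our convention of dropping $\rho^*$ from the notation, with $T$, $M$, and $A^t[-1]$ respectively in $\mathcal{D}(K_W)$.

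Combining these two steps yields the three isomorphisms $M^D\simeq T$, $T^D\simeq M$, and $A^D\simeq A^t[-1]$ in $\mathcal{D}(K_W)$. There is no real obstacle here because Lemma \ref{pullbackhom} has already done the heavy lifting; the only subtlety is bookkeeping: one must check that the smooth-site duality identifications are natural enough that pulling back along $\rho$ commutes with the formation of $(-)^{D_{sm}}$ in the stated cases, but this is exactly the content of the fact that $\Phi(F,\G_m)$ is an isomorphism. So the proof is essentially a one-line invocation of Lemma \ref{pullbackhom} together with the three classical smooth-site dualities.
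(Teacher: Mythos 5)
Your proposal is correct and follows essentially the same route as the paper: cite van Hamel's Corollary 1.4 for the smooth-site dualities, apply $\rho^*$, and invoke Lemma (\ref{pullbackhom}) to commute $\rho^*$ past $R\underline{\Hom}(-,\G_m)$. The paper's proof is a two-sentence version of exactly this argument.
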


\begin{proof}
By Corollary 1.4 of \cite{vanhamel2004}, the isomorphisms we are trying to demonstrate hold in $\mathcal{D}(K_{sm})$.  Applying $\rho^*$ to van Hamel's isomorphisms and using (\ref{pullbackhom}), we arrive at the corresponding isomorphisms in $\mathcal{D}(K_W)$.
\end{proof}

Let us return now to an arbitrary smooth scheme $X/K$ of finite type over $K$, and let $F\in\mathcal{D}(X_W)$.  There is a Yoneda pairing
$$%\begin{equation}
F'\otimes^L R\underline{\Hom}(F',F)\rightarrow F
$$%\end{equation}
for any $F'\in\mathcal{D}(X_W)$.  Suppose that $H^n(X_W,F)\neq 0$, but $H^m(X_W,F)$ vanishes for all $m>n$.  Then by applying $R\Gamma_X(-)$ and projecting, we arrive at a pairing
\begin{equation}\label{yoneda}
R\Gamma_X(F')\otimes^L R\Gamma_X(R\underline{\Hom}(F',F))\rightarrow H^n(X_W,F)[-n]
\end{equation}
in $\mathcal{D}(\Z)$, which we will also call the Yoneda pairing.  If $X=\Spec\ K$, $F'=T$ is a torus with cocharacter group $M$, and $F=\G_m$, then by the above proposition we arrive at the pairing $R\Gamma_K(T)\otimes^L R\Gamma_K(M)\rightarrow \Z[-1]$ of (\ref{fgweilduality}).

\begin{proposition}\label{dpairing}
Let $\pi:X\rightarrow K$ be a smooth, projective curve over a $p$-adic field $K$.  Then in $\mathcal{D}(K_W)$, there is a canonical isomorphism 
$$%\begin{equation}\label{dpairing1}
R\underline{\Hom}(R\pi_*\G_m,\G_m[-1])\stackrel{\sim}{\rightarrow} R\pi_*\G_m
$$%\end{equation}
which induces a pairing
$$%\begin{equation}\label{dpairing2}
R\pi_*\G_m\otimes^L R\pi_*\G_m\rightarrow \G_m[-1].
$$%\end{equation}
\end{proposition}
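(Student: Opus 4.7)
The plan is to first establish the autoduality in the smooth topology $\mathcal{D}(K_{sm})$, and then transport it to $\mathcal{D}(K_W)$ via $\rho^*$, using Lemma~\ref{pullbackhom}. The complex $R\pi_*\G_m$ has only two nonvanishing cohomology sheaves on $K_{sm}$: $R^0\pi_*\G_m=\G_m$ (since $X$ is a geometrically connected proper curve) and $R^1\pi_*\G_m=\Pic_{X/K}$, the latter fitting into a short exact sequence $0\to J\to\Pic_{X/K}\to\Z\to 0$, where $J=\Pic^0_{X/K}$ is the Jacobian and $\Z$ is the constant degree sheaf.

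First I would construct the autoduality in $\mathcal{D}(K_{sm})$. Applying $R\underline{\Hom}_{K_{sm}}(-,\G_m[-1])$ to the truncation triangle $\G_m\to R\pi_*\G_m\to\Pic_{X/K}[-1]$ gives an exact triangle
$$R\underline{\Hom}_{K_{sm}}(\Pic_{X/K},\G_m)\to R\underline{\Hom}_{K_{sm}}(R\pi_*\G_m,\G_m[-1])\to\Z[-1].$$
Using Van Hamel's Cartier dualities $\G_m^{D_{sm}}=\Z$, $\Z^{D_{sm}}=\G_m$, and $J^{D_{sm}}=J^t[-1]$ (Corollary~1.4 of \cite{vanhamel2004}), together with the principal polarization $J^t\simeq J$ of the Jacobian, a short d\'evissage along the Picard sequence shows that $R\underline{\Hom}_{K_{sm}}(\Pic_{X/K},\G_m)$ has $\G_m$ in degree $0$ and $J$ in degree $1$. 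Combined with the triangle above, $R\underline{\Hom}_{K_{sm}}(R\pi_*\G_m,\G_m[-1])$ thus has cohomology $\G_m$ in degree $0$ and an extension of $\Z$ by $J$ in degree $1$. The content of the autoduality is that this extension agrees with $\Pic_{X/K}$ itself, which follows from the compatibility of the Barsotti--Weil formula with the Weil pairing between $J$ and its dual.

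Second, I would pull back via $\rho^*$. The canonical comparison $\Phi(R\pi_*\G_m,\G_m[-1])$ of Lemma~\ref{pullbackhom} is an isomorphism in $\mathcal{D}(K_W)$ by d\'evissage: the truncation triangle for $R\pi_*\G_m$ and the short exact sequence for $\Pic_{X/K}$ reduce the claim to checking that $\Phi(F,\G_m[-1])$ is an isomorphism for $F=\G_m$, $F=J$, and $F=\Z$, each of which is covered by Lemma~\ref{pullbackhom} (torus, abelian variety, and free finitely generated group scheme, respectively). Applying $\rho^*$ to the smooth-site autoduality and composing with $\Phi$ then produces the desired isomorphism $R\underline{\Hom}(R\pi_*\G_m,\G_m[-1])\simeq R\pi_*\G_m$ in $\mathcal{D}(K_W)$. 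The pairing is the composite
$$R\pi_*\G_m\otimes^L R\pi_*\G_m\simeq R\pi_*\G_m\otimes^L R\underline{\Hom}(R\pi_*\G_m,\G_m[-1])\to \G_m[-1],$$
where the last arrow is the Yoneda evaluation of (\ref{yoneda}).

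The main obstacle is the identification in step one: producing a canonical map between $R\pi_*\G_m$ and $R\underline{\Hom}_{K_{sm}}(R\pi_*\G_m,\G_m[-1])$ that realizes the desired isomorphism on cohomology sheaves. The degree-one part in particular requires recognizing the abstract extension of $\Z$ by $J$ appearing on the right as precisely $\Pic_{X/K}$, and since the proposition does not assume $X(K)\neq\emptyset$, the sequence $0\to J\to\Pic_{X/K}\to\Z\to 0$ need not split; this identification is therefore a genuine piece of input from the classical autoduality theory of curves (\cite{lichtenbaum1969}) rather than a formal consequence of the individual Cartier dualities of $J$ and $\Z$.
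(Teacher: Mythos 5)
Your overall strategy---establish the autoduality on the smooth site first and then pull it back along $\rho^*$ via Lemma \ref{pullbackhom}---matches the paper's, including the d\'evissage to the cases $\G_m$, the Jacobian $J$, and $\Z$ covered by that lemma. But your first step has a genuine gap, and you flag it yourself at the end: applying $R\underline{\Hom}_{K_{sm}}(-,\G_m[-1])$ to the truncation triangle and citing the Cartier dualities of the graded pieces only identifies the cohomology sheaves of $R\underline{\Hom}_{K_{sm}}(R\pi_*\G_m,\G_m[-1])$ up to an unknown extension; it produces no canonical \emph{morphism} to $R\pi_*\G_m$, nor does it recognize the extension class in degree $1$. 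Appealing to ``compatibility of the Barsotti--Weil formula with the Weil pairing'' does not close this gap---Barsotti--Weil is already what underlies the identification $J^{D_{sm}}\simeq J^t[-1]$ you are using, and it gives you nothing beyond the cohomology-sheaf level.

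The paper closes the gap by quoting a theorem of Deligne (\cite{deligne1973}), and the mechanism is worth knowing. One has a map of smooth sheaves $\Z^X\to R\pi_*\G_m[1]$, where $\Z^X$ is the sheaf associated to $U\mapsto \Z[X(U)]$ and the map sends a morphism $U\to X$ to its associated divisor on $X\times_K U$. Applying $R\underline{\Hom}_{K_{sm}}(-,\G_m)$ gives a composite
$$
R\underline{\Hom}_{K_{sm}}(R\pi_*\G_m[1],\G_m)\longrightarrow R\underline{\Hom}_{K_{sm}}(\Z^X,\G_m)=R\pi_*\G_m,
$$
where the equality is Yoneda's lemma (using that $X/K$ is smooth and proper). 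Deligne's theorem is that this map is an isomorphism after $\tau_{\leq 1}$, which suffices since both sides are concentrated in degrees $\leq 1$. This single construction simultaneously encodes the autoduality of the Jacobian and the $\Z$--$\G_m$ duality as one canonical morphism, which is exactly what your d\'evissage alone cannot manufacture. With this imported, the remainder of your argument---transport by $\rho^*$ and the Yoneda evaluation---agrees with the paper's.
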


\begin{proof}

Quite generally, Let $S$ be a scheme, and let $\pi:X\rightarrow S$ be a smooth, proper curve over $S$.  Deligne, in \cite{deligne1973}, has constructed an isomorphism
\begin{equation}\label{deligneiso}
\tau_{\leq 1}R\underline{\Hom}(\tau_{\leq 1}R\pi_*\G_m[1],\G_m)\rightarrow \tau_{\leq 1}R\pi_*\G_m
\end{equation}
of sheaves on $S_{fppf}$.  Let us make this isomorphism explicit when $S$ is the spectrum of a field $F$ of characteristic zero.  As noted in \cite{vanhamel2004}, in this case (\ref{deligneiso}) holds even on the smooth site of $S$.

Let $F(X)$ denote the function field of $X$.  Then in $\mathcal{D}(X_{sm})$, the complex $\G_m[1]$ is isomorphic to the complex $F(X)^\times[1]\rightarrow\Div_X$ where the map takes a function to its divisor.  Applying $R\pi_*$ to this complex, we see that $\pi_*\G_m[1]=\G_m[1]$ and $R^1\pi_*\G_m[1]=\Pic_X$, where by $\Pic_X$ we mean the sheaf on $F_{sm}$ defined by the Picard scheme of $X/F$.

For $U$ smooth over $F$, let $\Z[X(U)]$ be the free abelian group on the set of morphisms from $U$ to $X$ over $F$, and let $\Z^X$ be the sheaf on $F_{sm}$ associated to $U\mapsto \Z[X(U)]$.  There is a map $\Z^X\rightarrow R\pi_*\G_m[1]$ in $\mathcal{D}(F_{sm})$, given by taking a morphism $U\rightarrow X$ to its divisor in $\Div(X\times_F U)$.  Applying $R\underline{\Hom}_{F_{sm}}(-,\G_m)$ to this map, we arrive at Deligne's isomorphism
$$%\begin{equation}\label{dpairing0}
R\underline{\Hom}_{F_{sm}}(R\pi_*\G_m[1],\G_m)\stackrel{\sim}{\rightarrow} R\underline{\Hom}_{F_{sm}}(\Z^X,\G_m)=R\pi_*\G_m,
$$%\end{equation}
which encodes the auto-duality of the Jacobian of $X$, and the duality between the sheaves $\Z$ and $\G_m$.  The identity $R\underline{\Hom}_{F_{sm}}(\Z^X,\G_m)=R\pi_*\G_m$ follows from Yoneda's Lemma (this is where we use the fact that $X$ is smooth over $S$).

When $F=K$ Deligne's isomorphism reads $(R\pi_*\G_m[1])^{D_{sm}}\stackrel{\sim}{\rightarrow} R\pi_*\G_m$.  The cohomology sheaves of the complex $R\pi_*\G_m$ are all free finitely generated group schemes, tori, or abelian varieties, or extensions of such sheaves.  Thus (\ref{pullbackhom}) and (\ref{duals}) imply that
$$%\begin{equation}
(\rho^*R\pi_*\G_m)^D\simeq \rho^*(R\pi_*\G_m[1])^{D_{sm}} \simeq \rho^*R\pi_*\G_m
$$%\end{equation}
which is the desired canonical isomorphism.  The pairing $R\pi_*\G_m\otimes^L R\pi_*\G_m\rightarrow \G_m[-1]$ is induced by standard adjoint properties and a degree shift.
\end{proof}

On applying $R\Gamma_K(-)$ to each term in the pairing (\ref{dpairing}) and composing with the map $R\Gamma_K(\G_m[-1])\rightarrow \Z[-2]$, we arrive at a pairing
\begin{equation}\label{dpairing2}
R\Gamma_X(\G_m)\otimes^L R\Gamma_X(\G_m)\rightarrow \Z[-2]
\end{equation}
in the derived category of abelian groups.  We let
\begin{equation}\label{lpairing}
\lambda(X):R\Gamma_X(\G_m)\rightarrow R\Hom(R\Gamma_X(\G_m),\Z[-2])
\end{equation}
be the induced map (by the symmetry of Deligne's pairing, both induced maps are the same).  The main theorem of this chapter will describe to what extent $\lambda(X)$ is an isomorphism.

%%%%%%%%%%%%%%%%%%%%%%%%%%%%%%%%%%%%%%%%%%%%%%%%%%%
\subsection{Cohomology of $K$ with Abelian Variety Coefficients}

Studying the map $\lambda(X)$ will require us to understand the cohomology of $W$ acting on the $\bar{L}$-points of the Jacobian of $X$.  Therefore, it will be useful to establish a Weil group analogue of Tate's duality theorem for abelian varieties over local fields, found in \cite{tate1957}.  

To that end, we devote this section to the pairing (\ref{yoneda}) when $F=\G_m$ and $F'=A$ is an abelian variety over $K$.  Using (\ref{inthom}) and shifting, we see that the Yoneda pairing induces a pairing
$$%\begin{equation}\label{biext}
R\Gamma_K(A)\otimes^L R\Gamma_K(A^t)\rightarrow \Z
$$%\end{equation}
which is equal to the pairing induced by the biextension map $A\otimes^L A^t\rightarrow \G_m[1]$.  We let $\tau(A)$ denote the induced map,
\begin{equation}\label{biext2}
\tau(A):R\Gamma_K(A^t)\rightarrow R\Hom(R\Gamma_K(A),\Z).
\end{equation}
Our duality theorem for abelian varieties will describe to what extent $\tau(A)$ is an isomorphism.

\begin{lemma}
$H^i(W,A)=0$ for $i\neq 0,1$.
\end{lemma}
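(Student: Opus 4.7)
The plan is to reduce the statement to the classical vanishing of Galois cohomology for abelian varieties over $p$-adic fields, via the Weil-Galois comparison Theorem (\ref{weilgalalgcomp}). Since an abelian variety $A/K$ is in particular a connected commutative algebraic group scheme, that theorem applies verbatim, and since $H^i(W,A)=0$ for $i<0$ by convention, it suffices to verify the vanishing for $i\geq 2$.

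First I would dispose of the range $i\geq 3$. By Tate's theorem for abelian varieties over $p$-adic local fields (see for instance Milne, \emph{Arithmetic Duality Theorems}), $H^i(G,A(\bar{K}))=0$ for all $i\geq 2$. In particular this vanishes for $i\geq 3$, and part (iv) of (\ref{weilgalalgcomp}) gives isomorphisms $H^i(W,A)\cong H^i(G,A)$ in that range, so we conclude $H^i(W,A)=0$ for $i\geq 3$ immediately.

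It then remains to handle $i=2$. For this I would invoke part (iii) of (\ref{weilgalalgcomp}), which yields the short exact sequence
$$0\rightarrow H^1(W,A)\otimes\QZ \rightarrow H^2(G,A)\rightarrow H^2(W,A)\rightarrow 0.$$
Again by Tate, the middle term $H^2(G,A)$ vanishes, which forces $H^2(W,A)=0$ (and as a bonus shows that $H^1(W,A)$ has no nontrivial divisible quotients, a fact that will presumably be useful when analyzing the pairing $\tau(A)$ of (\ref{biext2}) on the $i=0,1$ pieces).

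There is no real technical obstacle here, since the comparison theorem (\ref{weilgalalgcomp}) has already packaged the precise relationship between $H^*(W,-)$ and $H^*(G,-)$ for connected commutative algebraic groups; the lemma is essentially an immediate translation of Tate's classical vanishing. If one preferred to keep the argument self-contained and avoid citing Tate for $i\geq 2$, the alternative would be to work directly from the spectral sequence (\ref{weilses}) and show $H^j(I,A(\bar{L}))=0$ for $j\geq 1$ using that $L$ has algebraically closed residue field (so $A$ has smooth reduction components for which a Lang-type argument applies to the connected Néron model), but routing through (\ref{weilgalalgcomp}) is far shorter.
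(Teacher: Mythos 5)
Your proposal is correct, but it takes a genuinely different route from the paper's. The paper disposes of $i\geq 3$ by appealing to general vanishing (strict cohomological dimension considerations together with Theorem~(\ref{weilgalalgcomp})(iv)), and then for the one interesting case $i=2$ it asserts that one can \emph{rerun} the classical Néron-model argument of Serre (Galois Cohomology, Chapter II, \S 5.3, Proposition 16) with $W$ in place of $G$ — that is, it adapts the proof rather than the result. You instead feed both $i\geq 3$ and $i=2$ through the already-established comparison theorem (\ref{weilgalalgcomp}): part (iv) kills everything in degree $\geq 3$, and part (iii) combined with Tate's vanishing $H^2(G,A)=0$ forces $H^2(W,A)=0$ from the short exact sequence. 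This is a clean and arguably shorter argument, and there is no circularity since (\ref{weilgalalgcomp}) is established before the lemma. One small slip: you describe part (iv) as giving isomorphisms $H^i(W,A)\cong H^i(G,A)$ for $i\geq 3$, whereas the theorem as stated already asserts outright vanishing of both groups in that range — the conclusion is the same, but the theorem hands it to you directly. Your closing remark about the self-contained alternative via (\ref{weilses}) and a Lang-type argument on the Néron model is in fact the substance of what the paper's one-line ``same proof as Serre'' reference is gesturing at.
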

\begin{proof} The only non-trivial assertion is that $H^2(W,A)=0$.  For this, one can use the same proof as for the vanishing of $H^2(G,A(\bar{K}))$; see Chapter II, \S 5.3, Proposition 16 of \cite{serre2002}.
\end{proof}

\begin{lemma}\label{homvanish}

Let $Y$ be any of the following groups: $\Z_p,\mathcal{O}_K,U_K,A(K)$.  Then $\Hom(Y,\Z)$ is zero.

\end{lemma}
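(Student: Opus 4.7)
The plan is to reduce all four cases to the single statement $\Hom(\Z_p,\Z)=0$, which will be the heart of the argument. The unifying observation is that each of the four groups $Y$ admits a subgroup $H$ of finite index which is a free $\Z_p$-module of finite rank. Given this, the exact sequence $0\to H\to Y\to Y/H\to 0$ applied to $\Hom(-,\Z)$ yields an injection $\Hom(Y,\Z)\hookrightarrow \Hom(H,\Z)$, since $\Hom(Y/H,\Z)=0$ for $Y/H$ finite. Then $\Hom(H,\Z)=\Hom(\Z_p,\Z)^{\text{rk}}=0$ settles the lemma.

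The key lemma, $\Hom(\Z_p,\Z)=0$, I would prove as follows. Let $f:\Z_p\to\Z$ be any abelian group homomorphism. For any prime $\ell\neq p$, the element $\ell$ is a unit in $\Z_p$, so multiplication by $\ell$ is a bijection on $\Z_p$ as an abelian group. Hence for every $x\in\Z_p$ there is $y\in\Z_p$ with $\ell y = x$, so $f(x)=\ell f(y)$ is divisible by $\ell$ in $\Z$. Therefore $f(\Z_p)\subseteq \bigcap_{\ell\neq p}\ell\Z = 0$.

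For the individual cases: $\mathcal{O}_K$ is a free $\Z_p$-module of rank $[K:\Q_p]$, so $H=\mathcal{O}_K$ works directly. For $U_K$, choose $n$ large enough (e.g.\ $n > v_K(p)/(p-1)$) so that the $p$-adic logarithm gives an isomorphism of topological groups $U_K^{(n)}\stackrel{\sim}{\to}\mathfrak{m}^n\subset \mathcal{O}_K$, which is a free $\Z_p$-module of rank $[K:\Q_p]$; the quotient $U_K/U_K^{(n)}$ is finite, so $H=U_K^{(n)}$ works. For $A(K)$, Mattuck's theorem (see e.g.\ \cite{mattuck1955}) provides an open subgroup of finite index isomorphic to $\mathcal{O}_K^{\dim A}$, coming from the formal group along the zero section; this subgroup is a free $\Z_p$-module of finite rank, and serves as $H$.

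The only mild subtlety I anticipate is the case of $A(K)$, which requires invoking Mattuck's structure theorem rather than an elementary decomposition; the remaining three cases are essentially bookkeeping once the $\Z_p$ case is handled. No continuity assumption on $f$ is needed anywhere: the divisibility argument for $\Z_p$ is purely algebraic.
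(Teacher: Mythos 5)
Your reduction scheme --- find a finite-index subgroup $H\subseteq Y$ that is a free $\Z_p$-module of finite rank, observe $\Hom(Y/H,\Z)=0$ for finite $Y/H$ so that $\Hom(Y,\Z)\hookrightarrow\Hom(H,\Z)\cong\Hom(\Z_p,\Z)^{\text{rk}}$, and handle $U_K$ and $A(K)$ via the $p$-adic logarithm and Mattuck's theorem --- matches the paper's own reduction to the case $\Z_p$. Where you genuinely diverge is in the proof of the key lemma $\Hom(\Z_p,\Z)=0$. The paper argues by contradiction: a nonzero $f$ may be assumed surjective (its image is a nonzero subgroup of $\Z$, hence $\cong\Z$), and then reducing mod $p^n$ forces $\Z_p/p^n\Z_p\to\Z/p^n\Z$ to be an isomorphism for each $n$, so $\ker f\subseteq\bigcap_n p^n\Z_p=0$ and $f$ is a bijection $\Z_p\to\Z$, impossible by cardinality. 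Your argument instead exploits that $\Z_p$ is $\ell$-divisible for every prime $\ell\neq p$: any $f:\Z_p\to\Z$ therefore has image contained in $\bigcap_{\ell\neq p}\ell\Z=0$. Both are correct and purely algebraic (no continuity assumption needed); yours is shorter and avoids the cardinality contradiction, while the paper's has the mild virtue of working entirely ``at $p$'' and illustrating that the surjection $\Z_p\twoheadrightarrow\Z$ would in fact split the profinite topology into the discrete one. Either way the lemma holds.
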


\begin{proof}

First consider the case of $Y=\Z_p$.  Let $f:\Z_p\rightarrow \Z$ be a non-zero homomorphism; since the only non-trivial subgroups of $\Z$ are isomorphic to $\Z$, we may assume $f$ is surjective.  Composing with the surjection $\Z\rightarrow \Z/p^n\Z$, we see that $f$ induces a surjection $\Z_p\rightarrow \Z/p^n\Z$.  This latter map must factor through $\Z_p/p^n\Z_p$, and therefore $f$ induces a surjection $\Z_p/p^n\Z_p\rightarrow \Z/p^n\Z.$ As these two finite groups have the same order, this is an isomorphism.  It follows that $\ker(f)\subseteq \bigcap_n p^n\Z_p=0$, and hence $f$ is injective.  Thus $f$ is an isomorphism, which is a contradiction.

Now suppose that $Y=\mathcal{O}_K$.  The vanishing of $\Hom(\mathcal{O}_K,\Z)$ follows immediately from the fact that $\mathcal{O}_K$ is a free $\Z_p$-module of rank equal to $[K:\Q_p]$.  The result for $Y=U_K$ follows from the fact that $U_K$ contains a subgroup of finite index isomorphic to $\mathcal{O}_K$ as abstract abelian groups.  Similarly, $A(K)$ contains a finite index subgroup isomorphic to $\dim A$ copies of $\mathcal{O}_K$.
\end{proof}

\begin{lemma}\label{abvarcomp}
The restriction map $R\Gamma_G(A)\rightarrow R\Gamma_W(A)$ is an isomorphism in $\mathcal{D}(\Z)$.
\end{lemma}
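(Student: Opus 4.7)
The plan is to show the restriction induces an isomorphism on each cohomology group, which then gives the desired isomorphism in $\mathcal{D}(\Z)$. Both complexes $R\Gamma_G(A)$ and $R\Gamma_W(A)$ are concentrated in degrees $0$ and $1$: on the $W$ side this is the preceding vanishing lemma $H^i(W,A)=0$ for $i\geq 2$, while on the $G$ side it is the classical result $H^i(G,A(\bar{K}))=0$ for $i\geq 2$ quoted from Serre. In degree $0$ both groups are canonically $A(K)$ and the restriction is the identity, so the content of the lemma reduces to showing that $H^1(G,A)\to H^1(W,A)$ is an isomorphism.

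By (\ref{weilgalalgcomp})(ii), this restriction identifies $H^1(G,A)$ with the torsion subgroup $H^1(W,A)_{\text{tors}}$; and by (\ref{weilgalalgcomp})(iii), combined with $H^2(G,A)=H^2(W,A)=0$, we have $H^1(W,A)\otimes\QZ=0$, so that $H^1(W,A)/H^1(W,A)_{\text{tors}}$ is divisible. It therefore suffices to prove that $H^1(W,A)$ is itself a torsion group, at which point the inclusion of its torsion subgroup is the identity.

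The main obstacle is showing this torsion property. To address it, I would apply the Hochschild-Serre short exact sequence (\ref{weilses}) to obtain
$$0\to H^1(\frak{w},A(L))\to H^1(W,A)\to H^0(\frak{w},H^1(I,A(\bar{L})))\to 0.$$
The right-hand term injects into $H^1(I,A(\bar{L}))$, which is torsion since $I$ is profinite acting on a discrete module. The left-hand term equals $A(L)/(\sigma-1)A(L)$, and the hardest step is showing this cokernel is torsion. I would argue via the N\'eron model $\mathcal{A}$ of $A$, using the reduction sequence
$$0\to \hat{A}(\frak{m}_L)\to A(L)\to \mathcal{A}_{\bar{k}}(\bar{k})\to 0$$
which exhibits $A(L)$ as an extension of the $\bar{k}$-points of the special fiber by the formal group at the origin. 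The filtration of $\hat{A}(\frak{m}_L)$ by powers of $\frak{m}_L$ has successive quotients isomorphic to the additive group of $\bar{k}$, on which $\sigma-1$ is surjective by (\ref{H1vanish})(ii); the filtration argument of (\ref{H1vanish})(iii) then shows $\sigma-1$ is surjective on $\hat{A}(\frak{m}_L)$. On the identity component $\mathcal{A}^0_{\bar{k}}(\bar{k})$, Lang's theorem for connected commutative algebraic groups over $\F_q$ yields surjectivity of $\sigma-1$. Consequently $A(L)/(\sigma-1)A(L)$ is a quotient of the finite component group $\mathcal{A}_{\bar{k}}(\bar{k})/\mathcal{A}^0_{\bar{k}}(\bar{k})$, and is in particular finite. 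This proves $H^1(W,A)$ is torsion and concludes the argument.
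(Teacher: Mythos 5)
Your proof is correct and follows the paper's overall strategy: reduce to showing $H^1(W,A)$ is torsion (via the comparison theorem), then to $H^1(\frak{w},A(L))$ being torsion (via the Hochschild--Serre short exact sequence), and finally deploy the N\'eron model of $A$ over $\mathcal{O}_L$. The one genuine point of divergence is the last step. The paper filters $\mathcal{A}(\mathcal{O}_L)$ by $\mathcal{A}_0(\mathcal{O}_L)$, the subscheme with connected special fiber, so that $A(L)/\mathcal{A}_0(\mathcal{O}_L) \cong \pi_0(\bar{k})$, and then cites Greenberg's Proposition~3 outright for the vanishing $H^1(\frak{w},\mathcal{A}_0(\mathcal{O}_L))=0$. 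You instead use the finer reduction filtration $\hat{A}(\frak{m}_L)\subset A(L)$ with quotient $\mathcal{A}_{\bar{k}}(\bar{k})$, and then unpack the Greenberg-type vanishing by hand: on $\hat{A}(\frak{m}_L)$ via the $\frak{m}_L$-adic filtration and the additive Hilbert~90 of Lemma~\ref{H1vanish}(ii)--(iii), and on $\mathcal{A}^0_{\bar{k}}(\bar{k})$ via Lang's theorem. This makes the proof more self-contained at the cost of some extra bookkeeping; both routes terminate in the same observation that $H^1(\frak{w},A(L))$ is a quotient of the finite component group $\pi_0(\bar{k})$. Two minor quibbles: the divisibility observation you draw from \ref{weilgalalgcomp}(iii) is superfluous, since once $H^1(G,A)\hookrightarrow H^1(W,A)$ is identified with the torsion subgroup, the torsionness of $H^1(W,A)$ is exactly what is needed with no further reduction; and the graded pieces of the formal-group filtration are $\bar{k}^{\dim A}$ rather than $\bar{k}$, though this does not affect the argument since Lemma~\ref{H1vanish}(ii) applies equally with $M=\Z^{\dim A}$.
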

\begin{proof}
We must show that the maps $H^i(G,A)\rightarrow H^i(W,A)$ are isomorphisms for all $i\geq 0$.  In light of (\ref{weilgalalgcomp}), we only need to show that $H^1(W,A)$ is torsion.  Recall from (\ref{weilses}) that the group $H^{1}(W,A)$ fits into the exact sequence
$$0\rightarrow H^{1}(\frak{w},A(L))\rightarrow H^{1}(W,A)\rightarrow H^{0}(\frak{w},H^{1}(N,A))\rightarrow 0.$$
The group on the right is torsion, because it is a subgroup of a Galois cohomology group. Thus we are reducing to showing that $H^1(\frak{w},A(L))$ is torsion.

Let $\mathcal{A}/\mathcal{O}_{L}$ be the N\'eron model for $A$ over the ring of integers of $L$ ($\mathcal{A}$ is the base change to $\mathcal{O}_{L}$ of the N\'eron model for $A/K$; see \cite{bln1980}, Theorem 7.2.1 and Corollary 2.).  Let $\mathcal{A}_{0}\subseteq\mathcal{A}$ be the subscheme whose special fiber is the identity component of the special fiber of $\mathcal{A}$, and whose generic fiber is $A_{L}$.  We have an exact sequence of $\frak{w}$-modules,
 $$0\rightarrow \mathcal{A}_{0}(\mathcal{O}_{L})\rightarrow \mathcal{A}(\mathcal{O}_{L})\rightarrow\pi_{0}(\bar{k})\rightarrow 0,$$
where $\pi_{0}(\bar{k})$ is the group of connected components of the special fiber of $\mathcal{A}$.  This sequence is exact by Hensel's Lemma; see Proposition I.3.8 of \cite{milne1980}.  Taking cohomology gives a short exact sequence
$$H^{1}(\frak{w},\mathcal{A}_{0}(\mathcal{O}_{L}))\rightarrow H^{1}(\frak{w},\mathcal{A}(\mathcal{O}_{L}))\rightarrow H^{1}(\frak{w},\pi_{0}(\bar{k}))\rightarrow 0,$$
and it follows from Proposition 3 of \cite{greenberg} that $H^{1}(\frak{w},\mathcal{A}_{0}(\mathcal{O}_{L}))=0$.  Since $\mathcal{A}(\mathcal{O}_{L})=A(L)$, we have that $H^{1}(\frak{w},A(L))=H^{1}(\frak{w},\pi_{0}(\bar{k}))$, which is finite.
\end{proof}

\begin{theorem}\label{weilabvarduality}
The map $\tau(A)$ of (\ref{biext2}) has the following properties:  
\begin{itemize}

\item[(i)] $\tau(A)^0:A^t(K)\rightarrow \Ext(H^1(W,A),\Z)=H^1(W,A)^*$ is an isomorphism of profinite groups.

\item[(ii)] $\tau(A)^1:H^1(W,A^t)\rightarrow \Ext(A(K),\Z)$ induces an isomorphism of $H^1(W,A^t)$ with the torsion subgroup $A(K)^*$ of $\Ext(A(K),\Z)$.

\end{itemize}
The cohomology of both complexes vanishes outside of degrees 0 and 1.  In particular, $\tau(A)^i$ is injective for all $i$.

\end{theorem}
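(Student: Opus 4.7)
My plan is to reduce the theorem to Tate's classical local duality theorem for abelian varieties via Lemma \ref{abvarcomp}. I would first unpack the cohomology of both complexes. The source $R\Gamma_K(A^t) = R\Gamma_W(A^t)$ has cohomology $A^t(K)$ in degree $0$ and $H^1(W,A^t)$ in degree $1$, and vanishes elsewhere by the lemma immediately preceding the theorem. For the target $R\Hom(R\Gamma_W(A),\Z)$, I apply Proposition \ref{rhomz} with $n=0$ to obtain short exact sequences
$$0 \to \Ext(H^{1-i}(W,A),\Z) \to \Ext^i(R\Gamma_W(A),\Z) \to \Hom(H^{-i}(W,A),\Z) \to 0.$$
Since only $H^0(W,A) = A(K)$ and $H^1(W,A)$ are nonzero, and since $\Hom(A(K),\Z)=0$ by Lemma \ref{homvanish}, these collapse to $\Ext^0 = \Ext(H^1(W,A),\Z) = H^1(W,A)^*$ (using that $H^1(W,A)\cong H^1(G,A)$ is torsion by Lemma \ref{abvarcomp}) and $\Ext^1 = \Ext(A(K),\Z)$, with all other $\Ext^i$ vanishing.

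Next I would identify $\tau(A)$ with the classical Tate biextension pairing. By Lemma \ref{abvarcomp}, restriction gives $R\Gamma_G(A)\cong R\Gamma_W(A)$ and likewise for $A^t$. The biextension $A\otimes^L A^t\to \G_m[1]$ is a map of Galois modules, so the Weil pairing is obtained from the Galois pairing by postcomposing with $R\Gamma_G(\G_m[1]) \to R\Gamma_W(\G_m[1])$. Composing further with $R\Gamma_W(\G_m)[1]\to \Z$ yields our $\Z$-valued pairing; composing the Galois side with $R\Gamma_G(\G_m)[1]\to \Br(K)[-1] = \QZ[-1]$ yields the classical Tate pairing. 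The two outputs are compatible via Theorem \ref{weilgalcomp}(iii) applied to $\bar{L}^\times$, which gives the isomorphism $H^1(W,\bar{L}^\times)\otimes \QZ \cong \Br(K)$; on the $\Ext$-side this promotion matches the boundary in the long exact sequence coming from $0\to \Z\to \Q\to \QZ\to 0$ used in the proof of Proposition \ref{rhomz}.

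Finally I would conclude via Tate's theorem. For $i=0$, $\tau(A)^0$ becomes Tate's isomorphism $A^t(K)\to H^1(G,A)^* = H^1(W,A)^*$ of profinite groups. For $i=1$, apply $\Hom(A(K),-)$ to $0\to \Z\to \Q\to \QZ\to 0$: since $\Hom(A(K),\Z)=0$ and $\Hom(A(K),\Q)$ is torsion-free, the torsion subgroup of $\Hom(A(K),\QZ)$ injects as the full torsion subgroup of $\Ext(A(K),\Z)$. Because $A(K)$ has an open pro-$p$ subgroup of finite index (being a compact $p$-adic analytic group), every finite-index subgroup of $A(K)$ is open, so torsion homomorphisms to $\QZ$ are precisely the continuous ones, giving $\Ext(A(K),\Z)_{\text{tors}} = A(K)^*$. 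Under this identification, $\tau(A)^1$ becomes Tate's isomorphism $H^1(G,A^t)\cong A(K)^*$ followed by the inclusion $A(K)^*\hookrightarrow \Ext(A(K),\Z)$. Vanishing of cohomology outside degrees $0,1$ and injectivity in all degrees are immediate from the first paragraph.

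The main obstacle will be the second paragraph's bookkeeping: verifying that the $\Z$-valued Weil pairing induces, via the $\Ext$-boundary in Proposition \ref{rhomz}, exactly the classical Tate pairing valued in $\QZ$. Once this derived-category compatibility is established, everything else is a direct transcription of Tate's theorem.
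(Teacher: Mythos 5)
Your proposal is correct and follows essentially the same route as the paper: reduce to Tate's duality theorem via Lemma \ref{abvarcomp}, compute $\Ext^i(R\Gamma_W(A),\Z)$ using Proposition \ref{rhomz} and Lemma \ref{homvanish}, and identify $\Ext(A(K),\Z)_{\text{tors}}$ with $A(K)^*$. If anything you are more thorough than the paper's own proof: the paper only argues that $A(K)^*$ injects into $\Ext(A(K),\Z)$, whereas you also verify that it exhausts the torsion (via unique divisibility of $\Hom(A(K),\Q)$ and the fact that every finite-index subgroup of $A(K)$ is open); and the compatibility between the $\Z$-valued Weil pairing and Tate's $\QZ$-valued pairing, which you explicitly flag as the remaining obstacle, is relied upon silently by the paper as well.
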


\begin{proof}
By (\ref{rhomz}) and (\ref{homvanish}), the maps $\tau(A)^i$ reduce to maps
$$%\begin{equation}
\tau(A)^i:H^i(W,A^t)\rightarrow \Ext(H^{1-i}(W,A),\Z).
$$%\end{equation}
The group $H^1(W,A)$ is torsion, hence has no non-zero maps to $\Q$.  Part (i) of the theorem now follows from (\ref{abvarcomp}) and Tate's duality theorem on abelian varieties over local fields (see the main theorem of \cite{tate1957}).  The profinite group $A(K)$ admits no continuous maps to $\Q$, so $A(K)^*$ injects into $\Ext(A(K),\Z)$.  Part (ii) of the theorem now follows again from (\ref{abvarcomp}) and Tate's theorem.
\end{proof}

%%%%%%%%%%%%%%%%%%%%%%%%%%%%%%%%%%%%%%%%%%%%%%%%%
\subsection{Duality for Weil-smooth Cohomology of Curves}

Before stating our Weil-smooth duality theorem for curves, we would like to remind the reader of Lichtenbaum's  duality theorem for curves over $p$-adic fields, and van Hamel's approach to its construction and proof of non-degeneracy.  As always, let $\pi:X\rightarrow K$ be a smooth, projective, geometrically connected curve.

\begin{theorem} (Lichtenbaum, \cite{lichtenbaum1969})  There are natural pairings
$$H^i(X_{sm},\G_m)\otimes H^{3-i}(X_{sm},\G_m)\rightarrow \QZ$$
which induce isomorphisms $H^i(X_{sm},\G_m)\otimes\hat{\Z}\rightarrow H^{3-i}(X_{sm},\G_m)^*$ for all $i$, where $H^i(X_{sm},\G_m)$ has the natural topology coming from that on $K$.
\end{theorem}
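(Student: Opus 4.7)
The plan is to deduce Lichtenbaum's classical theorem as a corollary of the paper's main Weil-smooth duality theorem for curves (Theorem \ref{curveduality}), via a comparison between smooth and Weil-smooth cohomology of $\G_m$ on $X$. This mirrors how Tate Local Duality and the reciprocity isomorphism are extracted from the Weil-module duality Theorem \ref{fgweilduality} in the zero-dimensional case treated earlier in the paper.

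First, I would establish a comparison theorem relating $H^i(X_{sm},\G_m)$ to $H^i(X_W,\G_m)$, directly analogous to Theorem \ref{weilgalalgcomp} for zero-dimensional $K$. Using Kummer sequences on both sites, noting that torsion sheaves have the same Galois and Weil cohomology (as in Proposition \ref{weilgaltorsion}), and applying the same diagram-chasing argument as in \ref{weilgalcomp}, one should obtain: $H^0(X_{sm},\G_m) = K^\times = H^0(X_W,\G_m)$; $H^1(X_{sm},\G_m) = \Pic(X) = H^1(X_W,\G_m)_{\text{tors}}$; a short exact sequence $0 \to H^1(X_W,\G_m)\otimes\QZ \to \Br(X) \to H^2(X_W,\G_m) \to 0$; and an agreement in degree $3$. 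Second, apply Theorem \ref{curveduality} and unfold the right-hand side using Proposition \ref{rhomz}, which expresses $\Ext^i(R\Gamma_X(\G_m),\Z[-2])$ in terms of $\Hom$ and $\Ext$ of the cohomology groups $H^j(X_W,\G_m)$. Third, translate: combine the Weil-smooth dualities with the comparison isomorphisms to produce pairings $H^i(X_{sm},\G_m) \otimes H^{3-i}(X_{sm},\G_m) \to \QZ$, where the shift from the Weil target $\Z[-2]$ to the smooth target $\QZ[-3]$ reflects the standard replacement of $R\Hom(-,\Z[-1])$ by $R\Hom(-,\QZ[-2])$ coming from $0 \to \Z \to \Q \to \QZ \to 0$, exactly as for the passage from the Weil projection $R\Gamma_W(\bar{L}^\times)\to\Z[-1]$ to the Galois invariant map $R\Gamma_G(\bar{K}^\times) \to \QZ[-2]$. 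Fourth, check that $\otimes\hat\Z$ on the left-hand side kills precisely the $\Q$-linear contributions that prevent the Weil pairing from being an isomorphism in degrees $2$ and $3$, and matches the topological dual on the right.

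The main obstacle will be twofold. First, rigorously setting up the comparison between $H^i(X_{sm},\G_m)$ and $H^i(X_W,\G_m)$: in degree $3$ one must control $R^q\pi_\ast\G_m$ on $X_W$ for $q\ge 2$ and combine this with the vanishing from Theorem \ref{h3vanish} to pin down $H^3(X_{sm},\G_m)$, and one must verify compatibility of Kummer sequences on the two sites at the level of the $\delta$-maps that enter the comparison diagram. Second, and more delicate, is the topological accounting: the natural topology on $K^\times$ and on $\Pic(X)$ enters through subgroups of $U_K$-type, and the paper already flags (in the discussion preceding Theorem \ref{curveduality}) that the inclusion $U_K^\ast \to \Hom(U_K,\QZ)$ is not surjective. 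Showing that profinite completion on the left exactly compensates for this failure, so that $H^i(X_{sm},\G_m)\otimes\hat\Z$ maps isomorphically to $H^{3-i}(X_{sm},\G_m)^\ast$ rather than just injecting, is the technical heart of the argument and parallels the passage from Theorem \ref{fgweilduality} to Tate--Nakayama Duality in the reciprocity corollary.
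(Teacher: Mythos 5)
This theorem is a \emph{recalled} classical result: the paper states it with a citation to \cite{lichtenbaum1969} and gives no proof, only pointing to van Hamel's functorial treatment in \cite{vanhamel2004}. So you are supplying a proof where the paper supplies none; the right comparison is therefore not with ``the paper's proof'' but with the role this theorem plays in the paper's logical structure, and with the paper's own partial recovery of it at the very end of \S 5.

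Measured against that, your route has a genuine gap. Theorem \ref{curveduality}, which is the linchpin of your argument, carries the hypothesis $X(K)\neq\emptyset$; Lichtenbaum's theorem as stated does not. Your deduction could therefore recover at best a special case of the theorem you are asked to prove. The paper acknowledges exactly this limitation: the final theorem of \S 5 (``we can reprove the main result of \cite{lichtenbaum1969} for curves $X/K$ containing a rational point'') recovers only the isomorphism $H^2(X_{sm},\G_m)\stackrel{\sim}{\to} H^1(X_{sm},\G_m)^*$, and only with the rational-point assumption, via the comparison exact sequences (\ref{weilsmoothcomp1})--(\ref{weilsmoothcomp3}) and the Five Lemma. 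Nothing in the paper attempts the degree $(0,3)$ pairing nor the full ``$\otimes\hat\Z$ on the left is exactly what's needed to make the map an isomorphism'' bookkeeping for all four degrees; your outline flags these as ``the technical heart'' but gives no argument, and since $\lambda(X)$ is only injective (not iso) in degrees $2$ and $3$, one cannot read off the Lichtenbaum isomorphisms in those degrees from Theorem \ref{curveduality} alone without additional work. Finally, note a quieter issue: your pairing is the one induced by the Deligne/van Hamel construction, not Lichtenbaum's explicit pairing on Brauer classes and divisors; the paper is careful to flag that the agreement of these two is itself a nontrivial fact, deferring to \S 3.3 of \cite{vanhamel2004}, so that compatibility step would also need to be cited or proved before your deduction could establish the theorem ``as stated.''

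There is also a logical-dependency caveat worth keeping in mind. Theorem \ref{weilabvarduality}, which feeds into \ref{curveduality} through van Hamel's filtration, is proved using Tate's duality theorem for abelian varieties over local fields \cite{tate1957}; van Hamel's proof of the Lichtenbaum theorem is itself a reduction to that same Tate theorem. So deducing Lichtenbaum from \ref{curveduality} is not circular, but it is a detour: it routes through a Weil-cohomological refinement whose proof already sits on top of essentially the same input. This is perfectly reasonable as a sanity check or as a demonstration that the Weil theory specializes correctly (which is what the paper does for the $i=1,2$ case), but it is not a simplification, and it does not reprove the theorem in the generality stated here.
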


Lichtenbaum defines his pairing by explicitly evaluating representatives of the Brauer group on divisor classes.  Since our objects live in the derived category where the notion of ``element'' does not make sense, van Hamel's  functorial approach adapts better to the Weil-smooth situation.

Let $F=R\pi_*\G_m\in\mathcal{D}(X_{sm})$, so that $R\Gamma_{K_{sm}}(F) = R\Gamma_{X_{sm}}(\G_m)$.  In \cite{vanhamel2004}, van Hamel's approach to Lichtenbaum's duality theorem is to put an ``ascending filtration'' on $F$.  That is, he defines complexes $F_i$ and constructs a series of morphisms $0\rightarrow F_0\rightarrow F_1\rightarrow F_2 = F$ in $\mathcal{D}(K_{sm})$.  For each $i\geq 0$, van Hamel defines the $i^{th}$ graded piece $G_i$ to be the mapping cone of $F_{i-1}\rightarrow F_i$, yielding an exact triangle $F_{i-1}\rightarrow F_i\rightarrow G_i\rightarrow F_{i-1}[1]$ in $\mathcal{D}(K_{sm})$.

The sheaf $F_0$ is defined by $F_0:=H^0(F)=\G_m$, and $F_1$ is defined to be the mapping cone of the composite $F\rightarrow \Pic_X[-1]\stackrel{\deg}{\rightarrow} \Z[-1]$.  Thus there are triangles
$$F_1\rightarrow F\rightarrow \Z[-1]\rightarrow F_1[1]\quad \text{and}\quad \G_m\rightarrow F_1\rightarrow \Pic_X^0[-1]\rightarrow \G_m[1]$$
in $\mathcal{D}(K_{sm})$.  The graded pieces $G_i$ are the given by
$$%\begin{equation}
G_i = \left\{
\begin{array}{cl}
\G_m & i = 0\\
\Pic_X^0[-1] & i=1\\
\Z[-1] & i = 2\\
0 & i \geq 3.
\end{array}
\right.
$$%\end{equation}
The filtration on $F$ induces a ``descending filtration'' $F^{D_{sm}}=F_2^{D_{sm}}\rightarrow F_1^{D_{sm}}\rightarrow F_0^{D_{sm}}\rightarrow 0$ on $F^{D_{sm}}$, and also induces triangles $G_i^{D_{sm}}\rightarrow F_i^{D_{sm}}\rightarrow F_{i-1}^{D_{sm}}\rightarrow G_i^{D_{sm}}[1]$ for all $i$.  The sheaves $G_i^{D_{sm}}$ are given by
$$%\begin{equation}
G_i^{D_{sm}} = \left\{
\begin{array}{cl}
\Z & i = 0\\
\text{Alb}_X & i = 1\\
\G_m[1] & i = 2\\
0 & i \geq 3.
\end{array}
\right.
$$%\end{equation}
To prove the non-degeneracy and perfectness results of the pairing, van Hamel then uses duality theorems for finitely generated group schemes, tori, and abelian varieties to analyze the pairings
$$R\Gamma_{K_{sm}}(G_i)\otimes^L R\Gamma_{K_{sm}}(G_i^{D_{sm}})\rightarrow \QZ[-2],$$
and pieces together a duality theorem for $R\Gamma_{K_{sm}}(F)=R\Gamma_{X_{sm}}(\G_m)$ using the Five Lemma.  We will essentially copy this approach, by applying $\rho^*$ to van Hamel's filtration and exact triangles.

We can now state and prove our duality theorem for the Weil-smooth cohomology of curves.

\begin{theorem}\label{curveduality}
Let $X/K$ be a smooth, projective, geometrically connected curve over $K$, such that $X(K)\neq\emptyset$.  The map
$$%\begin{equation}
\lambda(X):R\Gamma_X(\G_m)\rightarrow R\Hom(R\Gamma_X(\G_m),\Z[-2])
$$%\end{equation}
induced by the pairing (\ref{dpairing2}) has the following properties:
\begin{itemize}
\item[(i)] $\lambda(X)^i$ is an isomorphism for $i=0,1$.
\item[(ii)] $\lambda(X)^i$ is injective for $i=2,3$.
\end{itemize}
The cohomology of both complexes vanishes outside of degrees $0$ through $3$.
\end{theorem}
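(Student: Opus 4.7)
The plan is to imitate van Hamel's functorial approach \cite{vanhamel2004} to Lichtenbaum duality, working on the Weil-smooth side. Applying $\rho^*$ to van Hamel's filtration of $R\pi_*\G_m\in\mathcal{D}(K_{sm})$ yields in $\mathcal{D}(K_W)$ a two-step filtration $F_0\to F_1\to F_2$ with $F_2=\rho^*R\pi_*\G_m$ and graded pieces $G_0=\G_m$, $G_1=\Pic_X^0[-1]$, $G_2=\Z[-1]$. The hypothesis $X(K)\neq\emptyset$ enters through the Abel--Jacobi map, which identifies $\Pic_X^0$ with the Jacobian and $\text{Alb}_X$ with its dual abelian variety. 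Combined with (\ref{pullbackhom}), (\ref{duals}), and the self-duality of Proposition (\ref{dpairing}), this gives a compatible descending filtration on the dual complex with graded pieces $G_0^D=\Z[-1]$, $G_1^D=\text{Alb}_X[-1]$, and $G_2^D=\G_m$.

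The first step of the proof is to identify the pairings $R\Gamma_K(G_i)\otimes^L R\Gamma_K(G_i^D)\to\Z[-2]$ with duality maps already established in the paper. Up to degree shifts, the pairing for $i=0$ recovers $\eta(\Z)$ from Proposition (\ref{almostduality}) in one direction and $\psi(\Z)$ from Theorem (\ref{fgweilduality}) in the other; by symmetry the pairing for $i=2$ recovers the same maps with the roles swapped; and the pairing for $i=1$ is exactly the map $\tau(\text{Jac}_X)$ of Theorem (\ref{weilabvarduality}). The known properties are: $\psi(\Z)$ is an isomorphism; $\eta(\Z)$ is an isomorphism in degrees $\neq 2$ and injective in degree $2$ with image the torsion subgroup of the target; and $\tau(\text{Jac}_X)$ is an isomorphism in degree $0$, injective in degree $1$ with image the torsion subgroup, and both sides vanish outside degrees $0,1$.

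The second step is to assemble the three pieces by a double Five Lemma argument. Applying $R\Gamma_X$ to the triangles $F_0\to F_1\to G_1$ and $F_1\to F_2\to G_2$ and to the matching triangles on the dual side produces two commutative diagrams of exact triangles in $\mathcal{D}(\Z)$, connecting $\lambda$-type maps for $F_0$, $F_1$, and $F_2=\rho^*R\pi_*\G_m$. I would first lift the duality from $F_0$ (which is $\eta(\Z)$) to $F_1$ using the strong behavior of $\tau(\text{Jac}_X)$, and then from $F_1$ to $F_2$ using the duality for $G_2=\Z[-1]$. The degree-by-degree conclusions about $\lambda(X)^i$ follow by tracking ``isomorphism vs.\ injective-with-torsion-image'' through each Five Lemma.

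The hard part will be the Five Lemma bookkeeping, because the failures of surjectivity of $\eta(\Z)$ in degree $2$ and of $\tau(\text{Jac}_X)$ in degree $1$ both propagate into degrees $2$ and $3$ of $\lambda(X)$; one must verify that they combine to produce exactly injectivity (and no worse failure), while preserving the full isomorphism in degrees $0$ and $1$. A subsidiary task is the vanishing of cohomology outside degrees $0$ through $3$: for $R\Gamma_X(\G_m)$ this is immediate from the filtration together with (\ref{h3vanish}) and the computation of $H^*(W,\Z)$ in the Corollary after (\ref{almostduality}); for the $\Z[-2]$-dual it requires (\ref{rhomz}) and the observation that the top cohomology groups of $R\Gamma_X(\G_m)$ involved are torsion (e.g.\ $U_K^*$ is divisible and torsion), so their $\Hom(-,\Z)$ contributions die, leaving only the expected $\Ext$ contributions in degrees $0$ through $3$.
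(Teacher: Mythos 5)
Your proposal matches the paper's proof in all essentials: pulling van Hamel's two-step filtration of $R\pi_*\G_m$ back along $\rho^*$, identifying the graded duality maps with $\eta(\Z)$, $\tau(\Piczero_X)$, and $\psi(\Z)[1]$ respectively, and then assembling $\lambda(X)$ (which the paper calls $\phi_2$, after a shift) by two successive Five Lemma arguments, first lifting from $F_0$ to $F_1$ via the abelian-variety duality and then from $F_1$ to $F_2$ via $\psi(\Z)$. The only cosmetic difference is that you shift the Cartier duals by $[-1]$ (dualizing against $\G_m[-1]$ rather than $\G_m$), which is immaterial; otherwise the strategy, the hypotheses used (notably $X(K)\neq\emptyset$ to identify $\Piczero_X$ with the Jacobian), and the degree-by-degree bookkeeping are the same as in the paper.
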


\begin{proof}
As above, let $F$ be the complex $R\pi_*\G_m$ considered on the smooth site of $K$, so that
$$ R\Gamma_X(\G_m) = R\Gamma_{K_W}(\rho^* R\pi_*\G_m)$$
Applying $\rho^*$ to van Hamel's filtration provides us with a filtration $0\rightarrow \rho^*F_0\rightarrow \rho^*F_1\rightarrow \rho^*F_2 = \rho^*F$ on $F$
which comes equipped with exact triangles $\rho^*F_{i-1}\rightarrow \rho^*F_i\rightarrow \rho^*G_i\rightarrow \rho^*F_{i-1}[1].$

Now we apply $\rho^*$ to the dual filtration, and note that by (\ref{duals}), $\rho^*$ commutes with the dualizing functors in the sense that $\rho^*(G_i^{D_{sm}}) = (\rho^*G_i)^D$ for all $i$.  Repeatedly applying the Five Lemma and (\ref{pullbackhom}) to van Hamel's exact triangles shows that $\rho^*(F_i^D) = (\rho^*F_i)^D$ for all $i$.  Now the Yoneda pairing induces pairings
$$%\begin{equation}
R\Gamma_K(\rho^*G_i)\otimes^L R\Gamma_K((\rho^*G_i)^D)\rightarrow \Z[-1]
$$%\end{equation}
$$%\begin{equation}
R\Gamma_K(\rho^*F_i)\otimes^L R\Gamma_K((\rho^*F_i)^D)\rightarrow \Z[-1]
$$%\end{equation}
in $\mathcal{D}(\Z)$ for all $i$.  In a slight abuse of notation, we suppress $\rho^*$ from now on.

Let
$$%\begin{equation}
\gamma_i:R\Gamma_K(G_i^D)\rightarrow R\Hom(R\Gamma_K(G_i),\Z[-1])
$$%\end{equation}
be the induced map in $\mathcal{D}(\Z)$.  We will describe the maps $\gamma_i$ in terms of duality theorems we have already proven. 

From (\ref{almostduality}) we see that $\gamma_0=\eta(\Z)$ has the following properties: $\gamma_0^i$ is an isomorphism for $i\neq 2$, and $\gamma_0^2$ maps $H^2(W,\Z)$ isomorphically onto the torsion subgroup $U_K^*$ of $\Ext(K^\times,\Z)$.

Let $J_X$ be the Jacobian variety of $X$.  Any rational point of $X$ determines an embedding $X\hookrightarrow J_X$ defined over $K$, and thus a Weil-equivariant isomorphism $\Piczero_X(\bar{L})\rightarrow J_X(\bar{L})$.  Hence we can identify $\Piczero_X(\bar{L})$ with the $\bar{L}$-points of an abelian variety defined over $K$, and apply (\ref{weilabvarduality}) to $\Piczero_X$ and its dual abelian variety $\text{Alb}_X$.

From (\ref{weilabvarduality}), we see that $\gamma_1=\tau(\Piczero_X)$ has the following properties: $\gamma_1^0$ is an isomorphism which maps $\text{Alb}_X(K)$ isomorphically onto $H^1(W,\Piczero_X)^*$, and $\gamma_1^1$ is an injective map which maps $H^1(W,\text{Alb}_X)$ isomorphically onto the torsion subgroup $\Piczero_X(K)^*$ of $\Ext(\Piczero_X(K),\Z)$.  From (\ref{fgweilduality}) we see that $\gamma_2=\psi(\Z)[1]$ is an isomorphism.  

Now consider the maps
$$%\begin{equation}
\phi_i:R\Gamma_K(F_i^D)\rightarrow R\Hom(R\Gamma_K(F_i),\Z[-1])
$$%\end{equation}
induced by the Yoneda pairing.  We can determine to what extent the maps $\phi_i$ are isomorphisms, by using the triangles which defined $G_i$ and $G_i^D$.  When $i=0$ one has $G_0=F_0=\G_m$, hence $\phi_0=\gamma_0=\eta(\Z)$ is the map of (\ref{almostduality}).  When $i=1$ we have a diagram
$$%\begin{equation}
\xymatrix{
R\Gamma_K(\text{Alb}_X)\ar[r]^-{\gamma_1}\ar[d] &R\Hom(R\Gamma_K(\Piczero_X[-1]),\Z[-1])\ar[d]\\
R\Gamma_K(F_1^D)\ar[r]^-{\phi_1}\ar[d] &R\Hom(R\Gamma_K(F_1),\Z[-1])\ar[d]\\
R\Gamma_K(F_0^D)\ar[r]^-{\phi_0} &R\Hom(R\Gamma_K(F_0^D),\Z[-1]).
}
$$%\end{equation}
It follows that $\phi_1^0$ is an isomorphism, $\phi_1^1$ is injective, and $\phi_1^2$ maps $H^2(K_W,F_1^D)$ isomorphically onto the torsion subgroup $U_K^*$ of $\Ext^2(R\Gamma_K(F_1),\Z[-1])$.  The cohomology of all of these complexes vanishes outside of degrees $0$ through $2$.

When $i=2$ we have a diagram
$$%\begin{equation}
\xymatrix{
R\Gamma_K(\G_m[1])\ar[r]^-{\psi(\Z)[1]}_-\sim\ar[d] &R\Hom(R\Gamma_K(\Z[-1]),\Z[-1])\ar[d]\\
R\Gamma_K(F_2^D)\ar[r]^-{\phi_2}\ar[d] &R\Hom(R\Gamma_K(F_2),\Z[-1])\ar[d]\\
R\Gamma_K(F_1^D)\ar[r]^-{\phi_1} &R\Hom(R\Gamma_K(F_1),\Z[-1]).
}
$$%\end{equation}
It follows that $\phi_2^{-1}=\psi(\Z)^0$ is an isomorphism from $K^\times$ to $\Hom(R\Gamma_K(F_2),\Z[-1])$, that $\phi_2^0$ is an isomorphism, that $\phi_2^1$ is injective, and that $\phi_2^2$ is an isomorphism from $H^2(K_W,F_2^D)$ to the torsion subgroup $U_K^*$ of $\Ext^2(R\Gamma_K(F_2),\Z[-1])$.

The theorem is now clear, once we recall that $\lambda(X)$ is the map induced by the isomorphism $F_2[1]^D\rightarrow F_2$ of Weil-smooth sheaves, the map $\phi_2$, and shifting degrees by one.
\end{proof}

%One could speculate that there exists a natural way to ``topologize'' the complex $R\Gamma_X(\G_m)$ so that its cohomology groups inherit their natural $p$-adic topology, and a functor $R\Hom_{\text{cont}}(-,-)$, such that the following ``theorem'' would be true:  There exists a natural isomorphism
%\begin{equation}\label{speculation}
%R\Gamma_X(\G_m)\stackrel{\sim}{\rightarrow}R\Hom_{\text{cont}}(R\Gamma_X(\G_m),\Z[-2]),
%\end{equation}
%inducing short exact sequences
%\begin{equation}\label{speculation2}
%0\rightarrow \Ext^1_{\text{cont}}(H^{3-i}(X_W,\G_m),\Z)\rightarrow H^i(X_W,\G_m)\rightarrow \Hom_{\text{cont}}(H^{2-i}(X_W,\G_m),\Z)\rightarrow 0
%\end{equation}
%for all $i$.  

%While an isomorphism such as (\ref{speculation}) seems out of reach, establishing (\ref{speculation2}) seems plausible using Yoneda-ext in the category of topological abelian groups, especially since $\Hom_{\text{cont}}(H^i(X_W,\G_m),\Z) = \Hom(H^i(X_W,\G_m),\Z)$ for all $i$.  Furthermore, when $i=0,1$, $H^{3-i}(X_W,\G_m)$ has the discrete topology, and thus (\ref{speculation2}) holds.  

%%%%%%%%%%%%%%%%%%%%%%%%%%%%%%%%%%%%%%%%%%%
\subsection{Comparison with Smooth Cohomology}

In this section we compare the duality theorem of the previous section with the main theorem of \cite{lichtenbaum1969}.  That the our pairing is compatible with the original pairing defined by Lichtenbaum follows from the results of \S 3.3 of \cite{vanhamel2004}.

\begin{proposition}
Suppose that $X/K$ is a smooth, proper variety over $K$, and let $F$ be a torsion sheaf in $\mathcal{S}(X_{sm})$.  Then the restriction map $R\Gamma_{X_{sm}}(F)\rightarrow R\Gamma_{X}(F)$ is an isomorphism in $\mathcal{D}(\Z)$.
\end{proposition}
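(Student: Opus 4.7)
The plan is to reduce to the comparison of the $E_2$-pages of the two spectral sequences of Proposition~\ref{someprops}(v): the smooth one $H^p(G,H^q(X_{\bar{K},sm},F))\Rightarrow H^{p+q}(X_{sm},F)$, and the Weil-smooth one $H^p(W,H^q(\bar{X},\phi^*\rho^*F))\Rightarrow H^{p+q}(X_W,\rho^*F)$. The restriction map between these spectral sequences is induced functorially by $W\hookrightarrow G$ and the projection $\bar{X}\to X_{\bar{K}}$, so by a standard spectral-sequence argument it suffices to show that the induced map on $E_2$-terms is an isomorphism.

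First I would show that for each $q$, the natural map
$$
H^q(X_{\bar{K},sm},F)\longrightarrow H^q(\bar{X},\phi^*\rho^*F)
$$
is an isomorphism. For torsion $F$ on a smooth scheme, smooth cohomology coincides with \'etale cohomology (see \S1.2 of \cite{vanhamel2004}), so this reduces to the corresponding statement in \'etale cohomology. Since $X$ is proper over $K$ and both $\bar{K}$ and $\bar{L}$ are algebraically closed with $\bar{K}\subset\bar{L}$, the smooth-base-change theorem for proper morphisms with torsion coefficients (i.e., proper base change) yields the desired isomorphism, with the $W$-action on the right identified with the restriction of the $G$-action on the left via the inclusion $W\subset G$.

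Second, under this identification, each $G$-module $H^q(X_{\bar{K},sm},F)$ is torsion, since $F$ is. Hence Proposition~\ref{weilgaltorsion} applies term-by-term and yields functorial isomorphisms
$$
H^p(G,H^q(X_{\bar{K},sm},F))\;\stackrel{\sim}{\longrightarrow}\;H^p(W,H^q(\bar{X},\phi^*\rho^*F))
$$
for all $p,q\geq 0$. Combined with the first step, this proves the map on $E_2$-pages is an isomorphism, and hence the abutments agree.

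The main technical obstacle to watch is the compatibility of the two identifications: one needs to check that the $W$-module structure on $H^q(\bar{X},\phi^*\rho^*F)$ constructed in Section~4.1 is indeed the restriction along $W\hookrightarrow G$ of the natural $G$-module structure on $H^q(X_{\bar{K},sm},F)$ (under the proper-base-change isomorphism). Once this compatibility is in place, the argument is essentially the Weil-smooth analogue of Proposition~\ref{weilgaltorsion}.
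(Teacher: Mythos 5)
Your proposal is correct and takes essentially the same approach as the paper: the paper phrases it in derived-category language (using the factorizations $R\Gamma_{X_{sm}}=R\Gamma_G\circ R\Gamma_{X_{\bar{K},sm}}$ and $R\Gamma_X=R\Gamma_W\circ R\Gamma_{\bar{X}_{sm}}$ together with the base-change isomorphism $R\Gamma_{X_{\bar{K},sm}}(F)\simeq R\Gamma_{\bar{X}_{sm}}(F)$ from Milne and the comparison Proposition~\ref{weilgaltorsion}), while you unwind the same argument through the map of Hochschild--Serre spectral sequences and the isomorphism on $E_2$-pages. The two key ingredients --- invariance of torsion cohomology under extension of algebraically closed base fields for proper $X$, and the $G$-versus-$W$ comparison for torsion modules --- are identical.
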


\begin{proof}
By (\cite{milne1980}, Chapter VI, Corollary 2.6), the map $R\Gamma_{X_{\bar{K},sm}}(F)\rightarrow R\Gamma_{\bar{X}_{sm}}(F)$ is an isomorphism.  The result now follows from (\ref{weilgaltorsion}), since $R\Gamma_{X_{sm}}=R\Gamma_G\circ R\Gamma_{X_{\bar{K},sm}}$ and $R\Gamma_X=R\Gamma_W\circ R\Gamma_{\bar{X}_{sm}}$.
\end{proof}

By the previous proposition, smooth and Weil-smooth cohomology agree for the sheaf $\mu_n$, hence we can use Kummer sequences to study the restriction maps $H^i(X_{sm},\G_m)\rightarrow H^i(X_W,\G_m)$.  In particular there is a diagram
$$%\begin{equation}
\xymatrix{
0\ar[r] &H^i(X_{sm},\G_m)/n\ar[r]^-\delta\ar[d]^-{\text{res}^i/n} &H^{i+1}(X_{sm},\mu_n)\ar[r]\ar[d]^-\wr &H^{i+1}(X_{sm},\G_m)[n]\ar[r]\ar[d]^-{\text{res}^{i+1}[n]} &0\\
0\ar[r] &H^i(X_W,\G_m)/n\ar[r]^-\delta &H^{i+1}(X_W,\mu_n)\ar[r] &H^{i+1}(X_W,\G_m)[n]\ar[r] &0
}
$$%\end{equation}
from which we deduce an isomorphism $\delta^i_n:\ker(\text{res}^{i+1}[n])\rightarrow \text{coker}(\text{res}^i/n)$ for any pair of integers $i,n$.  Passing to the limit over all $n$, we obtain a canonical isomorphism
\begin{equation}\label{deltai}
\delta^i:\ker(\text{res}^{i+1}|_{\text{tors}})\rightarrow \text{coker}(\text{res}^i\otimes 1)
\end{equation}
where $\text{res}^i\otimes 1$ is the obvious map $H^i(X_{sm},\G_m)\otimes\QZ\rightarrow H^i(X_W,\G_m)\otimes\QZ$.

\begin{proposition}
Let $X/K$ be a smooth, projective, geometrically connected curve over $K$ such that $X(K)\neq \emptyset$.  The restriction maps $\text{res}^i:H^i(X_{sm},\G_m)\rightarrow H^i(X_W,\G_m)$ are described by the following exact sequences:
\begin{eqnarray}
0\rightarrow H^1(X_{sm},\G_m)\stackrel{\text{res}^1}{\rightarrow} H^1(X_W,\G_m)\rightarrow H^1(W,\G_m)\rightarrow 0\label{weilsmoothcomp1}\\
0\rightarrow \Br(K)\rightarrow H^2(X_{sm},\G_m) \stackrel{\text{res}^2}{\rightarrow} H^1(W,\Piczero_X)\rightarrow 0\label{weilsmoothcomp2}\\
0\rightarrow H^1(W,\Z)\otimes\QZ\rightarrow H^3(X_{sm},\G_m)\stackrel{\text{res}^3}{\rightarrow} H^3(X_W,\G_m)\rightarrow 0. \label{weilsmoothcomp3}
\end{eqnarray}
\end{proposition}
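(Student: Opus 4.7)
The plan is to derive all three comparison sequences simultaneously by applying the derived functors $R\Gamma_{K_{sm}}$ and $R\Gamma_{K_W}$ to van Hamel's filtration of $F = R\pi_*\G_m$ from the proof of Theorem \ref{curveduality}. Recall the distinguished triangles $F_1 \to F \to \Z[-1]$ and $\G_m \to F_1 \to \Piczero_X[-1]$ in $\mathcal{D}(K_{sm})$; applying $\rho^*$ yields analogous triangles in $\mathcal{D}(K_W)$. The restriction functor provides morphisms between the associated long exact sequences in both settings, and the task then reduces to analyzing the restriction maps on the graded pieces $\G_m$, $\Piczero_X[-1]$, and $\Z[-1]$.

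For each graded piece, the Weil--Galois comparison is already available. For $\G_m$, Theorem \ref{weilgalalgcomp} gives an isomorphism $K^\times \cong K^\times$ on $H^0$, a map $0 \to \Z$ on $H^1$, and the short exact sequence $0 \to H^1(W,\G_m)\otimes\QZ \to \Br(K) \to H^2(W,\G_m) = 0 \to 0$ on $H^2$. For $\Piczero_X$ (identifiable with the Jacobian via the rational point), Lemma \ref{abvarcomp} gives that the restriction map $R\Gamma_G(\Piczero_X) \to R\Gamma_W(\Piczero_X)$ is an isomorphism in $\mathcal{D}(\Z)$. For $\Z$, Theorem \ref{weilgalcomp} gives an isomorphism on $H^0$, the vanishing $H^1(G,\Z) = 0$ versus $H^1(W,\Z) = \Z$, and the short exact sequence $0 \to H^1(W,\Z)\otimes\QZ \to H^2(G,\Z) \to H^2(W,\Z) \to 0$.

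The hypothesis $X(K) \neq \emptyset$ is used to force the relevant connecting morphisms to vanish. A rational point determines a degree-$1$ divisor and hence a $G$-equivariant (and thus $W$-equivariant) splitting of the degree sequence $0 \to \Piczero_X \to \Pic_X \to \Z \to 0$. Tracing through van Hamel's triangles, this forces the connecting map $\Piczero_X(K) \to \Br(K)$ coming from $\G_m \to F_1 \to \Piczero_X[-1]$ to vanish in the smooth case, and the corresponding $H^0(W,\Z) \to H^1(W,\Piczero_X)$ coming from $F_1 \to F \to \Z[-1]$ to vanish in the Weil case. With these vanishings, one reads off the smooth cohomology of $F$ as $H^1(X_{sm},\G_m) = \Pic(X)$, with $H^2(X_{sm},\G_m)$ sitting in $0 \to \Br(K) \to H^2(X_{sm},\G_m) \to H^1(G,\Piczero_X) \to 0$, and $H^3(X_{sm},\G_m) = H^2(G,\Z)$; dually one obtains the Weil-smooth cohomology with $H^1(X_W,\G_m)$ sitting in $0 \to \Pic(X) \to H^1(X_W,\G_m) \to \Z \to 0$ and with $\text{res}^1$ being the inclusion, which is sequence~(\ref{weilsmoothcomp1}); the restriction $\text{res}^2$ kills $\Br(K)$ (since $H^2(W,\G_m) = 0$) and induces the isomorphism $H^1(G,\Piczero_X) \xrightarrow{\sim} H^1(W,\Piczero_X)$ of Lemma \ref{abvarcomp}, which is sequence~(\ref{weilsmoothcomp2}); and $H^3(X_W,\G_m) = H^2(W,\Z) = U_K^*$ with $\text{res}^3$ being precisely the comparison map of Theorem \ref{weilgalcomp}(iii), which is sequence~(\ref{weilsmoothcomp3}).

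The main obstacle will be carefully verifying the vanishing of the connecting morphisms associated to the rational point, and then performing the Five Lemma diagram chases linking the graded-piece comparisons to the statements for $F$ itself. In particular, identifying $H^p(K_{sm},\Pic_X)$ with $H^p(G,\Piczero_X)$ (and analogously in the Weil case) requires using the splitting of the degree sequence induced by a degree-$1$ divisor; this is where the rational-point hypothesis enters in an essential way, without which the three sequences would presumably fail to take the stated clean form. One should also verify that the maps used in the paper's later comparison with Lichtenbaum's theorem really do agree with $\text{res}^i$ — a routine but bookkeeping-heavy check using the compatibility of van Hamel's filtration with $\rho^*$.
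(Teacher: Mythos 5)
Your route is genuinely different from the paper's, and it is sound in outline.  The paper works with the Hochschild--Serre spectral sequences $H^p(G,H^q(X_{\bar K,sm},\G_m))\Rightarrow H^{p+q}(X_{sm},\G_m)$ and $H^p(W,H^q(\bar X_{sm},\G_m))\Rightarrow H^{p+q}(X_W,\G_m)$ and the five-term sequences of low-degree terms: sequence (1) comes from the map of low-degree sequences plus the Snake Lemma, sequence (2) from the degree-$2$ piece followed by the separate identification $H^1(G,\Pic_X)=H^1(W,\Pic_X)_{\mathrm{tors}}=H^1(W,\Piczero_X)$ (which does use the degree sequence, but only at that one spot), and sequence (3) from a Kummer-sequence argument (the isomorphism $\delta^i:\ker(\mathrm{res}^{i+1}|_{\mathrm{tors}})\to\mathrm{coker}(\mathrm{res}^i\otimes 1)$) rather than from any filtration.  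Your version instead organizes everything around van Hamel's filtration of $R\pi_*\G_m$ with graded pieces $\G_m$, $\Piczero_X[-1]$, $\Z[-1]$, and reads off all three sequences by comparing the long exact sequences.  This is more uniform and interfaces better with the derived-category machinery already set up for Theorem \ref{curveduality}, at the cost of having to verify more compatibilities (that $\rho^*$ commutes with the filtration and that $\mathrm{res}^i$ respects it), which you rightly flag.  The paper's version is more hands-on but avoids tracking the filtration.

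One caveat: your stated justification for why the two connecting maps vanish is imprecise.  A rational point does give a $G$-equivariant splitting of $0\to\Piczero_X\to\Pic_X\to\Z\to 0$, and that splitting (lifted to a degree-$1$ divisor, i.e.\ a genuine map $\Z[-1]\to R\pi_*\G_m$) takes care of the connecting map $H^0(-,\Z)\to H^2(-,F_1)$ in the triangle $F_1\to F\to\Z[-1]$, which is what you need for sequences (1) and (2) in the Weil setting.  But the connecting map $\Piczero_X(K)\to\Br(K)$ lives in the \emph{other} triangle $\G_m\to F_1\to\Piczero_X[-1]$, and the degree-sequence splitting is not directly what makes it vanish; rather, the section $s:\Spec K\to X$ gives a retraction $s^*$ of $\pi^*$ which forces $\Br(K)\hookrightarrow\Br(X)$ and hence $\Pic(X)\twoheadrightarrow\Pic_{X/K}(K)$, and it is \emph{that} surjectivity which kills the boundary $\Piczero_X(K)\to\Br(K)$.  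Relatedly, for sequence (2) you need not only that $\mathrm{res}^2$ kills $\Br(K)$ and is an isomorphism on the $H^1$-of-Jacobian graded piece (via Lemma \ref{abvarcomp}), but also that the composite $H^2(X_{sm},\G_m)\to H^2(X_W,\G_m)\to H^1(W,\Z)=\Z$ vanishes; this does hold, because it factors through $H^1(G,\Z)=0$ by the filtration compatibility, but you should say so.  These are gaps in the write-up rather than in the idea.
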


\begin{proof}
The map of Hochschild-Serre spectral sequences computing smooth and Weil-smooth cohomology gives us a map of short exact sequences
$$%\begin{equation}
\xymatrix{
0\ar[r] &0\ar[r]\ar[d] &H^1(X_{sm},\G_m)\ar[r]^-\sim\ar[d]^-{\text{res}^1} &H^0(G,\Pic_X)\ar[r]\ar[d]^-\wr &0 \\
0\ar[r] &H^1(W,\G_m)\ar[r] &H^1(X_W,\G_m)\ar[r] &H^0(W,\Pic_X)\ar[r] &0
}
$$%\end{equation}
coming from the long exact sequences of low degree.  That the non-zero map in the top row is an isomorphism follows from $X(K)\neq\emptyset$.  The existence of (\ref{weilsmoothcomp1}) follows by applying the Snake Lemma.

To prove the second existence of the second exact sequence, note that $H^2(X_{sm},\G_m)$ is a torsion group, so $\ker(\text{res}^2_{\text{tors}})=\ker(\text{res}^2)$.  We will show that there is a natural identification $\ker(\text{res}^2)=\Br(K)$.  The long exact sequences of low degree from the Hochschild-Serre spectral sequences  give us a map of short exact sequences
$$%\begin{equation}
\xymatrix{
0\ar[r] &\Br(K)\ar[r]\ar[d] &H^2(X_{sm},\G_m)\ar[r]\ar[d]^-{\text{res}^2} &H^1(G,\Pic_X)\ar[r]\ar[d] &0\\
0\ar[r] &0\ar[r] &H^2(X_W,\G_m)\ar[r]^-\sim &H^1(W,\Pic_X)\ar[r] &0.
}
$$%\end{equation}
It follows from (\ref{weilgalcomp}) and (\ref{weilgalalgcomp}) that $H^1(G,\Pic_X)$ can be identified with the torsion subgroup of $H^1(W,\Pic_X)$ via the restriction map.  

On the other hand, consider the long exact sequence in $W$-cohomology of $0\rightarrow \Piczero_X\rightarrow\Pic_X\rightarrow \Z\rightarrow 0$.  Since $X(K)\neq\emptyset$, any rational point determines a Weil-equivariant degree $1$ divisor class on $\bar{X}$, hence the map $\deg:H^0(W,\Pic_X)\rightarrow \Z$ is surjective.  The relevant part of the long exact sequence now reads
$$%\begin{equation}
H^0(W,\Pic_X)\stackrel{\deg}{\rightarrow} \Z\stackrel{0}{\rightarrow} H^1(W,\Piczero_X)\rightarrow H^1(W,\Pic_X)\rightarrow \Z\rightarrow 0,
$$%\end{equation}
and we have an identification $H^1(W,\Piczero_X)=H^1(W,\Pic_X)_{\text{tors}}$.  That (\ref{weilsmoothcomp1}) and (\ref{weilsmoothcomp2}) are exact is now clear.

The only remaining task is to identify the kernel of $\text{res}^3$.  By (\ref{deltai}) we have an identification $\ker(\text{res}^3)=\text{coker}(\text{res}^2\otimes1)$.  But as $H^2(X_{sm},\G_m)$ is torsion, this last cokernel can be identified with $H^2(X_W,\G_m)\otimes\QZ=H^1(W,\Pic_X)\otimes\QZ$.  The map on cohomology induced by the degree map gives an isomorphism of this last group with $H^1(W,\Z)\otimes\QZ$, since $H^1(W,\Piczero_X)\otimes\QZ=0$.
\end{proof}

With the above comparison theorem, we can reprove the main result of \cite{lichtenbaum1969} for curves $X/K$ containing a rational point.

\begin{theorem}
Suppose that $X/K$ is a smooth, projective, geometrically connected curve, such that $X(K)\neq\emptyset$.  Then the Lichtenbaum pairing $H^2(X_{sm},\G_m)\otimes H^1(X_{sm},\G_m)\rightarrow \QZ$ induces an isomorphism $H^2(X_{sm},\G_m)\rightarrow H^1(X_{sm},\G_m)^*$.
\end{theorem}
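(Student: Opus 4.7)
The plan is to deduce the theorem by combining the comparison results $(\ref{weilsmoothcomp1})$--$(\ref{weilsmoothcomp3})$ with the Weil analogue of Tate's duality for abelian varieties $(\ref{weilabvarduality})$ and the identification $\Br(K) = \QZ$ recalled as a corollary of $(\ref{weilgalalgcomp})$. The map $H^2(X_{sm},\G_m) \to H^1(X_{sm},\G_m)^*$ induced by the Lichtenbaum pairing will be shown to be an isomorphism by a Five Lemma argument, matching two natural exact sequences.

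On the source side, $(\ref{weilsmoothcomp2})$ yields
\[ 0 \to \Br(K) \to H^2(X_{sm},\G_m) \to H^1(W,\Piczero_X) \to 0. \]
On the target side, $H^1(X_{sm},\G_m) = \Pic(X)$, and since $X(K) \neq \emptyset$ the degree map $\Pic(X) \to \Z$ is surjective, giving the exact sequence $0 \to \Piczero(X) \to \Pic(X) \to \Z \to 0$. Applying the exact functor $\Hom(-,\QZ)$ produces
\[ 0 \to \QZ \to H^1(X_{sm},\G_m)^* \to \Piczero(X)^* \to 0. \]
I would show that the Lichtenbaum pairing induces a map from the first sequence to the second whose outer vertical arrows are the canonical isomorphism $\Br(K) \simeq \QZ$ of Local Class Field Theory and the isomorphism $H^1(W,\Piczero_X) \simeq \text{Alb}_X(K)^* = \Piczero(X)^*$ furnished by $(\ref{weilabvarduality})$(ii) (using that $\text{Alb}_X \simeq \Piczero_X$ for a curve with a rational point). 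The Five Lemma then finishes the argument.

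The hard part will be verifying that the Lichtenbaum pairing respects these two filtrations compatibly with the claimed vertical arrows. Commutativity of the left square reduces to the projection formula for $\pi : X \to \Spec K$: a class $\pi^*\alpha \in H^2(X_{sm},\G_m)$ pulled back from $\Br(K)$ evaluates on $D \in \Pic(X)$ as $\alpha \cdot \deg(D)$, hence vanishes on $\Piczero(X)$, and the induced map $\Br(K) \to \Hom(\Z,\QZ) = \QZ$ is the standard identification. Commutativity of the right square is exactly the compatibility between the Lichtenbaum pairing and the Weil-smooth pairing $(\ref{dpairing2})$ noted in the opening paragraph of this subsection (from van Hamel's \S3.3): under the restriction $\text{res}^2$ the induced quotient pairing $H^1(W,\Piczero_X) \otimes \Piczero(X) \to \QZ$ is the pairing $\tau(\Piczero_X)^1$ of $(\ref{biext2})$, i.e.\ precisely Weil-Tate duality on the Jacobian. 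Once this diagram is in place, the isomorphism $H^2(X_{sm},\G_m) \to H^1(X_{sm},\G_m)^*$ follows immediately.
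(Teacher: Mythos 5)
Your proposal matches the paper's proof essentially exactly: both arguments run the Five Lemma against the map of short exact sequences comparing $(\ref{weilsmoothcomp2})$ on the source with the $\QZ$-dual of $0 \to \Piczero(X) \to \Pic(X) \to \Z \to 0$ on the target, with the outer vertical isomorphisms given by $\Br(K) \simeq \QZ$ and by $\tau(\Piczero_X)^1$ from Theorem~(\ref{weilabvarduality})(ii). The paper's proof is terser — it simply draws the diagram, deferring the compatibility of the Lichtenbaum pairing with the filtrations to the remark at the start of the subsection citing van~Hamel \S3.3 — whereas you sketch why the two squares commute, which is a worthwhile addition but not a different method.
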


\begin{proof}
The map induced by the Lichtenbaum pairing fits into the diagram
$$%\begin{equation}
\xymatrix{
0\ar[r] &\Br(K)\ar[r]\ar[d]^-\wr &H^2(X_{sm},\G_m)\ar[r]\ar[d] &H^1(W,\Piczero_X)\ar[r]\ar[d]^-\wr &0\\
0\ar[r] &\Z^*\ar[r] &H^1(X_{sm},\G_m)^*\ar[r] &\Piczero_X(K)^*\ar[r] &0
}
$$%\end{equation}
where the top row is the exact sequence of (\ref{weilsmoothcomp2}).  The result follows by the Five Lemma.
\end{proof}

\begin{acknowledgements}
The research presented here constitutes the bulk of the author's Ph.D. thesis, completed at the University of Maryland, College Park.  As such, the author would like to extend his sincerest gratitude towards his thesis adviser, Niranjan Ramachandran, for suggesting this topic, and for consistently providing helpful comments, suggestions, and guidance.

Thomas Geisser, Baptiste Morin, and Mathias Flach also deserve a great amount of the author's gratitutde, for many helpful comments and questions regarding the main results of this article.  Lastly, the author would like to thank Thomas Haines and Larry Washington for helpful discussions concerning his thesis.
\end{acknowledgements}

\bibliographystyle{amsalpha}
\bibliography{myrefs.bib}

\end{document}